\documentclass[11pt,a4paper]{article}

\usepackage[utf8]{inputenc}
\usepackage[T1]{fontenc}
\usepackage[english]{babel}
\usepackage{amsmath}
\usepackage{amsfonts}
\usepackage{amsthm}
\usepackage{amssymb}
\usepackage{makeidx}
\usepackage{graphicx}
\usepackage{lmodern}
\usepackage[left=2cm,right=2cm,top=2cm,bottom=2cm]{geometry}

\usepackage{color}
\usepackage{comment}
\usepackage[dvipsnames]{xcolor}
\usepackage{graphicx}
\usepackage{framed}
\usepackage{tikz}
\usepackage{calrsfs}
\DeclareMathAlphabet{\pazocal}{OMS}{zplm}{m}{n}

\usepackage{fourier-orns}
\usepackage{mathtools}
\usepackage{relsize}
\usepackage{dsfont}
\usepackage{comment}
\usepackage{hyperref}

\newcommand{\B}{\mathbb{B}}
\newcommand{\R}{\mathbb{R}}

\renewcommand{\H}{\mathbb{H}}
\newcommand{\J}{\mathbb{J}}

\newcommand{\A}{\pazocal{A}}
\newcommand{\F}{\pazocal{F}}
\newcommand{\G}{\pazocal{G}}
\newcommand{\U}{\pazocal{U}}

\newcommand{\V}{\pazocal{V}}
\newcommand{\M}{\pazocal{M}}
\newcommand{\Bpazo}{\pazocal{B}}
\newcommand{\Qpazo}{\pazocal{Q}}
\newcommand{\Hpazo}{\pazocal{H}}
\newcommand{\Npazo}{\pazocal{N}}
\newcommand{\Lpazo}{\pazocal{L}}
\newcommand{\Ppazo}{\pazocal{P}}

\newcommand{\Rpazo}{\pazocal{R}}

\newcommand{\Opazo}{\pazocal{O}}

\newcommand{\T}{\mathcal{T}}

\newcommand{\Lcal}{\mathcal{L}}

\newcommand{\Pcal}{\mathcal{P}}

\newcommand{\Scal}{\mathcal{S}}

\newcommand{\BC}{\textnormal{BC}}
\newcommand{\MC}{\textnormal{MC}}

\newcommand{\Id}{\textnormal{Id}}
\newcommand{\D}{\textnormal{D}}

\newcommand{\Tan}{\textnormal{Tan}}

\newcommand{\supp}{\textnormal{supp}}

\newcommand{\Lip}{\textnormal{Lip}}
\newcommand{\AC}{\textnormal{AC}}

\newcommand{\loc}{\textnormal{loc}}
\newcommand{\Div}{\textnormal{div}}

\newcommand{\dist}{\textnormal{dist}}
\newcommand{\textbn}[1]{\textnormal{\textbf{#1}}}
\newcommand{\co}{\overline{\textnormal{co}} \hspace{0.05cm}}

\newcommand{\vb}{\boldsymbol{v}}

\newcommand{\Bpi}{\boldsymbol{\pi}}

\newcommand{\Bnu}{\boldsymbol{\nu}}

\newcommand{\Bmu}{\boldsymbol{\mu}}

\newcommand{\INTDom}[3]{\int_{#2} #1 \textnormal{d} #3}
\newcommand{\INTSeg}[4]{\int_{#3}^{#4} #1 \textnormal{d} #2}
\newcommand{\NormL}[3]{\parallel \hspace{-0.1cm} #1 \hspace{-0.1cm} \parallel _ {L^{#2}(#3)}}
\newcommand{\NormC}[3]{\left\| #1  \right\| _ {C^{#2}(#3)}}
\newcommand{\Norm}[1]{\parallel \hspace{-0.1cm} #1 \hspace{-0.1cm} \parallel}

\newcommand{\tderv}[2]{\tfrac{\textnormal{d} #1}{ \textnormal{d} #2}}

\newtheorem{Def}{Definition}[section]
\newtheorem{thm}[Def]{Theorem}
\newtheorem{prop}[Def]{Proposition}
\newtheorem{rmk}[Def]{Remark}
\newtheorem{lem}[Def]{Lemma}
\newtheorem{cor}[Def]{Corollary}

\newenvironment{taggedhyp}[1]
    {\taggedhypx}
    {\endtaggedhypx}

\title{Necessary Optimality Conditions for Optimal Control Problems in Wasserstein Spaces}
\author{Benoît Bonnet\footnote{CNRS,  IMJ-PRG,  UMR  7586,  Sorbonne  Université, 4  place  Jussieu,  75252  Paris,  France. \hfill \hspace{3.5cm} \textit{E-mail}: \texttt{benoit.bonnet@imj-prg.fr} (Corresponding author)} , Hélène Frankowska\footnote{CNRS,  IMJ-PRG,  UMR  7586,  Sorbonne  Université,  4  place  Jussieu,  75252  Paris,  France. \hfill \hspace{3.5cm} \textit{E-mail}: \texttt{helene.frankowska@imj-prg.fr}}}
\date{\today}

\begin{document}

\maketitle

\begin{abstract}
In this article, we derive first-order necessary optimality conditions for a constrained optimal control problem formulated in the Wasserstein space of probability measures. To this end, we introduce a new notion of localised metric subdifferential for compactly supported probability measures, and investigate the intrinsic linearised Cauchy problems associated to non-local continuity equations. In particular, we show that when the velocity perturbations belong to the tangent cone to the convexification of the set of admissible velocities, the solutions of these linearised problems are tangent to the solution set of the corresponding continuity inclusion. We then make use of these novel concepts to provide a synthetic and geometric proof of the celebrated Pontryagin Maximum Principle for an optimal control problem with inequality final-point constraints. In addition, we propose sufficient conditions ensuring the normality of the maximum principle.
\end{abstract}

{\footnotesize
\textbf{Keywords :} Mean-Field Optimal Control, Wasserstein Spaces, Pontryagin Maximum Principle, Differential Inclusions, Inner-Approximations of Optimal Trajectories

\vspace{0.25cm}

\textbf{MSC2020 Subject Classification :} 30L99, 34K09, 49J53, 49K21, 49Q22, 58E25
}


\section{Introduction}

In recent years, the mathematical analysis of collective behaviours in large systems of interacting agents has been the object of a growing attention from several mathematical communities. The corresponding class of so-called \textit{multi-agent systems} appears in a wide number of applications, including e.g. the modelling of autonomous vehicle ensembles \cite{Bullo2009}, swarms and flocking structures in the animal kingdom \cite{CS2}, opinion dynamics on networks \cite{Bellomo2013} and pedestrian flows \cite{CPT}. While these structures are frequently represented first-handedly as large dynamical systems formulated in finite-dimensional spaces -- e.g. by means of families of ordinary differential equations or time-dependent graphs --, such discrete formulations often give rise to intractable problems. Indeed, the main difficulty inherent to the study of multi-agent systems is their very large dimensionality, which causes severe challenges when trying to compute solutions using numerical methods or to describe agent-to-agent interactions in a meaningful way. These aspects are particularly relevant in \textit{control-theoretic} settings, where a policy maker aims at synthesising a control law which is as generic and global as possible in order to stir a system towards a desired goal. This type of control problems arise very frequently in the applied fields which were previously mentioned, see e.g. \cite{Albi2016,AlbiPareschiZanella,ControlKCS} and references therein.

In this context, the first step in order to circumvent these high-dimension related issues is to identify adequate infinite-dimensional approximations of the class of multi-agent systems at hand, usually in the form of a family of partial differential equations. One subsequently aims at synthesising a control law directly at the level of the PDE, and finally at returning to the original problem by using the ``infinite-dimensional'' control signal to stir the original finite-dimensional system (see e.g. the introduction of \cite{LipReg} for ampler details on this general scheme). To this day, the best identified framework to conduct such a program is that of \textit{mean-field approximations}. In this setting, the discrete agent trajectories are replaced by a curve of measures $\mu(\cdot)$, whose time-evolution is described by means of a \textit{non-local continuity equation} (see Section \ref{subsection:WassInc} below) of the form
\begin{equation}
\label{eq:Intro_CE}
\partial_t \mu(t) + \Div \big( v(t,\mu(t)) \mu(t) \big) = 0.
\end{equation}
These equations appear very naturally when performing infinite-dimensional approximations of deterministic particle systems driven by ODEs, and are usually studied in the space of probability measures endowed with the \textit{Wasserstein metrics} of optimal transport (see Section \ref{subsection:OptTransport} below). For a modern introduction to this topic, we point to \cite{golse} as well as to the survey \cite{Choi2014}.

In this article, we investigate necessary optimality conditions for the \textit{mean-field optimal control problems} of Bolza type
\begin{equation*}
(\Ppazo) ~~ \left\{
\begin{aligned}
\min_{u(\cdot) \in \U} & \left[ \INTSeg{L(t,\mu(t),u(t))}{t}{0}{T} + \varphi(\mu(T)) \right] \\
\text{s.t.} ~ & \left\{
\begin{aligned}
& \partial_t \mu(t) + \Div \big( v(t,\mu(t),u(t)) \mu(t) \big) = 0, \\
& \, \mu(0) = \mu^0, \\
&  \mu(T) \in \Qpazo_T.
\end{aligned}
\right.
\end{aligned}
\right.
\end{equation*}
The minimisation in problem $(\Ppazo)$ is taken over the set of admissible \textit{open-loop} controls $\U := \{ t \in [0,T] \mapsto u(t) \in U ~\text{s.t. $u(\cdot)$ is $\Lcal^1$-measurable} \}$, where $(U,d_U)$ is a compact metric space and $\Lcal^1$ denotes the standard one-dimensional Lebesgue measure. The state $\mu(\cdot)$ of the system is stirred by the controlled non-local velocity field $v : [0,T] \times \Pcal_c(\R^d) \times U \times \R^d \rightarrow \R^d$, while $\mu^0 \in \Pcal_c(\R^d)$ denotes a fixed initial datum and $\Qpazo_T \subset \Pcal_2(\R^d)$ is a closed set which represents final-point constraints. As customary in the modelling of optimal control problems, the cost is split into a final cost $\varphi : \Pcal_c(\R^d) \rightarrow \R$ and a distributed running cost $L : [0,T] \times \Pcal_c(\R^d) \times U \rightarrow \R$. Observe that in $(\Ppazo)$, the state variable $\mu(\cdot)$ is modelled as a curve of \textit{compactly supported} measures. This comes from the fact that the afore-described  applications of multi-agent systems usually involve compactly supported initial data, and that classical stability estimates (see e.g. Theorem \ref{thm:ExistenceWass} below) ensure that curves of measure solving non-local continuity equations starting from compactly supported data remain compactly supported at all subsequent times. 

Optimal control problems in Wasserstein spaces of the form $(\Ppazo)$ have been extensively studied in the past few years, mostly in the absence of final-point or running constraints. The first contributions on this topic \cite{Fornasier2019,FornasierPR2014,Fornasier2014} were concerned with the rigorous convergence of discrete multi-agent optimal controls towards their infinite-dimensional counterparts (see also the more recent contributions \cite{Cavagnari2021,Fornasier2019}), which provides a constructive way of recovering the existence of optimal trajectories. In parallel to these investigations, several attempts were made at deriving \textit{necessary optimality conditions} for mean-field optimal control problems. The first result of this kind was proposed in \cite{MFPMP}, where the authors proved a generalisation of the celebrated \textit{Pontryagin Maximum Principle} (``PMP'' in the sequel) for a multi-scale version of $(\Ppazo)$. Therein, a family of open-loop controls  acts on a finite-dimensional subset of \textit{leaders}, which allows to recover a PMP in Wasserstein spaces by passing to the limit in the finite-dimensional maximum principles as the number of agents goes to infinity. The first general derivation of the PMP for problem $(\Ppazo)$ was obtained by the first author in \cite{PMPWass}, and is based on a careful adaptation of the strategy of \textit{needle-variations} to the abstract geometric structure of Wasserstein spaces. This result was further extended in \cite{PMPWassConst} to problems with general final-point and running state constraints. In the latter contribution, the proof strategy combines a finite-dimensional non-smooth multipliers rule and \textit{outer-approximations} of optimal trajectories by countable families of curves generated using needle-variations. We also mention the earlier work \cite{Pogodaev2016} in which the author independently derived an alternative version of the maximum principle for a very particular instance of problem $(\Ppazo)$. This result was subsequently extended in the recent paper \cite{Pogodaev2020}, in which a full PMP in the spirit of \cite{PMPWass} is derived for unconstrained \textit{impulsive} control problems, by means of discrete approximations combined with Ekeland's principle. We likewise point out that a completely different approach to Pontryagin optimality conditions for mean-field optimal control problems -- formulated in terms of McKean-Vlasov dynamics and stochastic differential equations -- was independently developed in \cite{Carmona2015} (see also the recent monograph \cite[Vol.I Chapter 6]{Carmona2018}).

In this manuscript, we revisit the necessary optimality conditions for problem $(\Ppazo)$ by proposing a geometric proof of the maximum principle (see Theorem \ref{thm:PMPMayer} and Theorem \ref{thm:PMPBolza} below). The latter is based on a general methodology which was first developed by the second author in \cite{Frankowska1987} for Mayer problems, and that can be summarised as follows. When the running cost $L(\cdot,\cdot,\cdot)$ is equal to zero, the generalised Fermat rule states that the directional derivatives of the final cost $\varphi(\cdot)$ taken at points of local minimum are non-negative when evaluated along directions that are tangent to the set of admissible trajectories. Usually, due to the non-linearity of the dynamics, it is not possible to exhaustively describe the set of all such tangents. However in order to derive the PMP, it is sufficient to identify subsets of tangent directions which are informative enough. As it can be expected from the classical theory of optimisation, the set of trajectories of the linearised control system satisfying linearised constraints are tangent to the set of all admissible trajectories, provided some additional constraint qualifications are satisfied. While classical linearisation methods for control systems usually involve derivatives of the dynamics with respect to both state and control, these latter only describe a small subset of tangent directions and lead to the so-called \textit{weak maximum principle}. On the other hand, replacing the control system by a \textit{relaxed differential inclusion} does not change the set of tangents to its trajectories, even though it drastically enlarges the dynamics itself (since the right-hand side becomes the convex hull of the admissible velocities of the original control system). Then, linearisations  of this  ``larger'' set of dynamics obtained by taking adequate derivatives of the resulting set-valued velocity mapping allow to describe richer subsets of tangents. Note next that the Fermat rule also implies that the set of curves which are tangent to admissible trajectories has an empty intersection with the  the set of all directions in which the cost is strictly decreasing.  At this point, one can recover an explicit dual description of this fact in the form of an inequality by applying a separation theorem. The final step in this procedure is then to introduce a suitably chosen \textit{costate variable} satisfying the adjoint dynamics, which is such that the maximisation condition of the PMP holds for almost-every time. Besides, it is well known that Bolza problems can be equivalently written in Mayer form -- i.e. with $L = 0$ in $(\Ppazo)$ --, which means that the approach we just described is also valid for optimal control problems with non-trivial running costs. When the final-point constraints are described by families of functional inequalities and the set of tangents to the latter has empty interior, it follows that the abnormal version of the PMP holds true,  i.e. the adjoint state does not depend on the cost of the problem. On the other hand when the tangent set to the final-point constraints has non-empty interior and when the constraints are qualified in some sense, then the normal version of the PMP holds true, and the investigation of optimal curves is greatly simplified.

The approach that we described in the previous paragraph is general enough to continue with second- (and higher-) order optimality conditions, by taking second-order linearisations of control systems and constraints, as well as second-order derivatives of the costs, even in the presence of additional running state constraints. Another interesting feature of this strategy is that it allows to precisely discriminate between the normal and abnormal versions of the PMP, which is generically not true for proofs relying on outer-approximations of optimal trajectories. This general methodology has been successfully applied to optimal control problems formulated on very broad classes of dynamics, including e.g. classical ODEs in finite-dimensional spaces \cite{Frankowska1987}, semi-linear evolution equations in general Banach spaces \cite{Frankowska1990,FrankowskaMM2018} and stochastic differential equations \cite{FrankowskaZZ2018}. In addition to its holistic nature, this procedure also presents the advantage of being readily transposable to the study of \textit{second-order} necessary optimality conditions for optimal control problems, see e.g. \cite{FrankowskaL2020,Frankowska2017,Frankowska2018} and references therein. We point more specifically to the recent publication \cite{FrankowskaO2020} on this topic, in which an optimal control problem involving final-point equality and inequality constraint was investigated using metric inverse mapping theorems on the space of controls.

However, in the context of mean-field optimal control problems written in the general form $(\Ppazo)$, the implementation of the afore-described program is much more delicate. Indeed, the space of probability measures is at best a metric space when endowed with a Wasserstein metric, and a number of tools and concepts of set-valued and non-smooth analysis need to be carefully adapted to this setting in order to study necessary optimality conditions. Concerning differential-theoretic aspects, it has been hinted in the pioneering works \cite{McCann1997,Otto2001} and further installed in \cite{AGS,Villani1} that the Wasserstein space $(\Pcal_2(\R^d),W_2)$ can be endowed with a \textit{pseudo-Riemannian} structure. More specifically, the latter provides an explicit characterisation of the exponential map and of the corresponding \textit{analytical tangent space} $\Tan_{\mu} \Pcal_2(\R^d)$ (see Section \ref{subsection:OptTransport} below), and a full theory of first- and second-order differential calculus has been developed in this framework, see \cite[Chapter 10]{AGS} and \cite{Ambrosio2008,Gangbo2019}. Regarding differential inclusions, the authors of the present manuscript have recently proposed in \cite{ContInc} a sound generalisation of these objects for continuity equations formulated in Wasserstein spaces, in keeping with the classical theory that is described e.g. in \cite[Chapter 10]{Aubin1990}. In particular contrary to other attempts aiming at generalising differential inclusions in this context (see e.g. \cite{CavagnariMP2018,Jimenez2020}), the notion of \textit{continuity inclusion} introduced in \cite{ContInc} ensures a one-to-one correspondence between the trajectories of control systems and the canonically associated set-valued dynamics (see Theorem \ref{thm:ControlInc} below).

\bigskip

The contributions of this manuscript can be summarised as follows. In Section \ref{subsection:LocalisedSubdiff}, we start by proposing a new definition of \textit{localised subdifferentials} and \textit{differentiability} (see Definition \ref{def:Subdiff} and Definition \ref{def:LocalDiff} below) for functionals defined over Wasserstein spaces, which are well-suited to the study of dynamical and variational problems involving compactly supported measures. In Proposition \ref{prop:StrongSubdiff}, we adapt to this new setting a result already known for absolutely continuous measures in the classical case (see e.g. \cite[Chapter 1, Proposition 4.2]{Dafermos2006}), which asserts that subdifferentials which belong to $\Tan_{\mu}\Pcal_2(\R^d)$ are in fact strong (see Definition \ref{def:Subdiff} below). We subsequently derive the expression of the intrinsic linearised Cauchy problem associated to a non-local continuity equation in Proposition \ref{prop:Linearisation_Cauchy} of Section \ref{subsection:Linearisation}, and study in Theorem \ref{thm:Variational_Inclusion} the \textit{variational linearisation} of continuity inclusions. Thereafter in Section \ref{subsection:Mayer}, we apply these differential and set-theoretic concepts to prove a Pontryagin Maximum Principle for a \textit{Mayer} version of problem $(\Ppazo)$ -- i.e. without running cost -- in Theorem \ref{thm:PMPMayer}, where the set of end-point constraints $\Qpazo_T$ is defined by \textit{functional inequalities}. We also point out that the proof of this result has been slightly improved compared to earlier implementations of this program e.g. in \cite{FrankowskaMM2018,FrankowskaO2020}, as the separation arguments are performed in adequate \textit{finite-dimensional} spaces, which circumvents a lot of potential technicalities related to the separation of subsets of infinite-dimensional vector spaces. A full PMP for the corresponding Bolza problem is then recovered in Theorem \ref{thm:PMPBolza} of Section \ref{subsection:Bolza} by means of a standard procedure that directly builds on the PMP for the Mayer problem.

Before moving on to the core of the manuscript, we would like to make some final remarks.
\begin{enumerate}
\item While our methodology would also allow to deal with end-point constraints involving \textit{functional equalities}, the corresponding proof strategy relies on a different class of inner-approximations, combined with technical metric inverse mapping arguments in the spirit of \cite{FrankowskaO2020}. For the sake of clarity, we shall present these results separately in a future publication.
\item Even though the constrained PMP derived in \cite{PMPWassConst} incorporates state constraints and takes into account more general final-point constraints, its proof suffers from the drawbacks of being less geometrically meaningful and highly technical, while relying on outer-approximations. In this regard, the approach described in this manuscript is more synthetic, and should allow for the derivation of the maximum principle in the presence of general constraints (see e.g. \cite{Frankowska2018,FrankowskaO2020}).
\item The study of mean-field control problems of the form $(\Ppazo)$ is also sensible when the admissible controls $u : [0,T] \times \R^d \rightarrow \R^d$ have a \textit{closed-loop} structure with respect to the space variable, see e.g. \cite{PMPWassConst,PMPWass,SparseJQMF,ControlKCS} and \cite{Carmona2018,Lasry2007} in connection with the theory of mean-field games. However in this case, derivatives with respect to the space variable of the closed-loop controls also appear in the linearised systems and in  the adjoint dynamics of the PMP, which requires more regularity assumptions and leads to heavier expressions while using the same geometric ideas as in the open-loop case.
\end{enumerate}

The structure of the article is the following. In Section \ref{section:Preliminaries}, we expose a list of preliminary notions of optimal transport theory and set-valued analysis. In particular, we recall some of the recent results of \cite{ContInc} about continuity inclusions. In Section \ref{section:NonSmoothWass}, we present our new definition of localised subdifferentials along with the intrinsic and variational linearisations of the set-valued counterparts of continuity equations. In Section \ref{section:PMP}, we  subsequently prove a PMP in Wasserstein spaces, first for a Mayer problem, and then for a Bolza problem. We finally present in Section \ref{appendix:Examples} examples of functionals which are locally differentiable in the sense of Section \ref{section:NonSmoothWass}, and provide the expressions of their gradients.


\section{Preliminaries}
\label{section:Preliminaries}

In this section, we recall several notions pertaining to optimal transport theory, set-valued analysis and continuity inclusions in Wasserstein spaces, all of which will be needed to state and prove the main results of Section \ref{section:NonSmoothWass} and Section \ref{section:PMP}.


\subsection{Analysis in measures spaces and optimal transport}
\label{subsection:OptTransport}

We recollect here some preliminary tools of analysis in measure spaces and optimal transport theory, for which we refer the reader to the monographs \cite{AmbrosioFuscoPallara} and \cite{AGS,OTAM,Villani1} respectively.

Given two complete separable metric spaces $(\Scal,d_{\Scal})$ and $(\T,d_{\T})$, we denote by $\overline{\Omega}$ the closure of a subset $\Omega \subset \Scal$ and by  $\partial  \Omega$ its topological boundary. A mapping $\phi : \Scal \rightarrow \T$ is said to be bounded if for some (and thus all) $\tau_0 \in \T$, it holds that $\sup_{s \in \Scal} d_{\T}(\phi(s),\tau_0) < +\infty$. We will use the notation $C^0(\Scal,\T)$ (resp. $C^0_b(\Scal,\T)$) for the space of continuous (resp. continuous and bounded) functions from $\Scal$ into $\T$, as well as $\AC([0,T],\Scal)$ for the space of absolutely continuous curves from $[0,T] \subset \R_+$ into $\Scal$. In the sequel, $\Lip(\phi(\cdot) \, ; \Omega)$ stands for the Lipschitz constant of a map $\phi : \Scal \rightarrow \T$ over $\Omega \subset \Scal$. Given a separable Banach space $(X,\Norm{\cdot}_X)$ and $p \in [1,+\infty)$, we denote by $L^p([0,T],X)$ the Banach space of \textit{$p$-integrable maps} from $[0,T] \subset \R_+$ into $X$ in the sense of Bochner (see e.g. \cite{DiestelUhl}), where $[0,T]$ is endowed with the standard one-dimensional Lebesgue measure $\Lcal^1$.

Let $\Pcal(\R^d)$ be the space of \textit{Borel probability measures} defined over $\R^d$ endowed with the \textit{narrow topology}, that is the topology induced by the weak-$^*$ convergence of measures defined by
\begin{equation}
\label{eq:Narrow}
\mu_n ~ \underset{n \rightarrow +\infty}{\, \rightharpoonup^*}~ \mu \quad \text{if and only if} \quad \INTDom{\phi(x)}{\R^d}{\mu_n(x)} ~\underset{n \rightarrow +\infty}{\longrightarrow}~ \INTDom{\phi(x)}{\R^d}{\mu(x)} \quad \text{for every $\phi \in C^0_b(\R^d,\R)$}.
\end{equation}
Given $p \in [1,+\infty)$, we define the \textit{momentum of order $p$} of an element $\mu \in \Pcal(\R^d)$ by
\begin{equation*}
\M_p(\mu) := \bigg( \INTDom{|x|^p}{\R^d}{\mu(x)} \bigg)^{1/p},
\end{equation*}
as well as its \textit{support} as the closed subset of $\R^d$
\begin{equation*}
\supp(\mu) := \Big\{ x \in \R^d ~\text{s.t.}~ \mu(\Npazo_x) > 0 ~~ \text{for every neighbourhood $\Npazo_x$ of $x$} \Big\}.
\end{equation*}
We will henceforth denote by $\Pcal_p(\R^d)$ and $\Pcal_c(\R^d)$ the subsets of Borel probability measures with finite momentum of order $p$ and compact support respectively. In what follows, we will frequently use the \textit{$R$-fattening} of the support of a measure $\mu \in \Pcal_c(\R^d)$, defined by
\begin{equation}
\label{eq:Fattening}
B_{\mu}(R) ~ := \bigcup_{x \, \in \, \supp(\mu)} \hspace{-0.2cm} B(x,R), \vspace{-0.15cm}
\end{equation}
where $B(x,R) \subset \R^d$ denotes the closed ball of center $x \in \R^d$ with radius $R > 0$.

\begin{Def}[Pushforward of a measure and transport plans]
Given a Borel map $f : \R^d \rightarrow \R^d$ and an element $\mu \in \Pcal(\R^d)$, the \textnormal{pushforward} $f_{\#} \mu \in \Pcal(\R^d)$ of $\mu$ through $f(\cdot)$ is defined as
\begin{equation*}
f_{\#} \mu(B) := \mu(f^{-1}(B)),
\end{equation*}
for any Borel set $B \subset \R^d$. Given two elements $\mu,\nu \in \Pcal(\R^d)$, the set $\Gamma(\mu,\nu)$ of \textnormal{transport plans} between $\mu$ and $\nu$ is defined as the subset of all $\gamma \in \Pcal(\R^{2d})$ such that $\pi^1_{\#} \gamma = \mu$ and $\pi^2_{\#} \gamma = \nu$, where the maps $\pi^1,\pi^2 : \R^d \times \R^d \rightarrow \R^d$ stand for the projection operations onto the first and second factors.
\end{Def}

We recall below the definition and some of the known properties of the \textit{Wasserstein spaces} of optimal transport theory (see e.g. \cite{AGS,OTAM,Villani1}).

\begin{Def}[Wasserstein spaces]
Given $p \in [1,+\infty)$ and $\mu,\nu \in \Pcal_p(\R^d)$, the \textnormal{Wasserstein distance of order $p$} between $\mu$ and $\nu$ is defined by
\begin{equation*}
W_p(\mu,\nu) \, := \min_{\gamma \in \Gamma(\mu,\nu)} \bigg( \INTDom{|x-y|^p}{\R^{2d}}{\gamma(x,y)} \bigg)^{1/p},
\end{equation*}
and we denote by $\Gamma_o(\mu,\nu)$ the set of \textnormal{optimal transport plans} for which this minimum is attained. The \textnormal{Wasserstein space of order $p$} is then defined as the metric space $(\Pcal_p(\R^d),W_p)$ of probability measures with finite momentum of order $p$ endowed with the $W_p$-distance.
\end{Def}

It is a well-known fact in optimal transport theory that the sets $\Gamma_o(\mu,\nu)$ are non-empty for every $p \in [1,+\infty)$ and any $\mu,\nu \in \Pcal_p(\R^d)$. This is a direct consequence of the narrow closedness of $\Gamma(\mu,\nu)$, together with the coercivity of the map
\begin{equation*}
\gamma \in \Pcal_p(\R^{2d}) \mapsto \INTDom{|x-y|^p}{\R^{2d}}{\gamma(x,y)} \in \R_+.
\end{equation*}
Moreover, the space $(\Pcal_p(\R^d),W_p)$ is a complete separable metric space, and the topology induced by the Wasserstein distance metrises the narrow topology \eqref{eq:Narrow} restricted to $\Pcal_p(\R^d)$, in the sense that
\begin{equation*}
W_p(\mu_n,\mu) ~\underset{n \rightarrow +\infty}{\longrightarrow}~ 0 \qquad \text{if and only if} \qquad
\left\{
\begin{aligned}
\mu_n ~ & \underset{n \rightarrow +\infty}{\, \rightharpoonup^*}~ \mu, \\
\INTDom{|x|^p}{\R^d}{\mu_n(x)} & \underset{n \rightarrow +\infty}{\longrightarrow} \INTDom{|x|^p}{\R^d}{\mu(x)},
\end{aligned}
\right.
\end{equation*}
for every sequence $(\mu_n) \subset \Pcal_p(\R^d)$ and $\mu \in \Pcal_p(\R^d)$. In the sequel, we will frequently consider the (non-complete) metric space $(\Pcal_c(\R^d),W_p)$ of compactly supported measures, seen as a subset of the Wasserstein space $(\Pcal_p(\R^d),W_p)$, where $p \in [1,+\infty)$ will depend on the context. 

In the particular case where $p = 2$, it is known that $(\Pcal_2(\R^d),W_2)$ can also be endowed with a \textit{pseudo-Riemannian structure}. Given an element $\mu \in \Pcal_2(\R^d)$, the \textit{analytical tangent space} to $\Pcal_2(\R^d)$ at $\mu$ is defined in this context as
\begin{equation*}
\Tan_{\mu} \Pcal_2(\R^d) := \overline{\big\{ \nabla \xi(\cdot) ~\text{s.t.}~ \xi \in C^{\infty}_c(\R^d)  \big\}}^{L^2(\mu)}.
\end{equation*}
We point the reader to \cite[Chapter 8]{AGS} for a contextual introduction of this geometric object, as well as for a thorough analysis of its interplay with absolutely continuous curves of measures and continuity equations (see also Section \ref{subsection:WassInc} below).

We end this series of prerequisites by recalling a variant of the \textit{disintegration theorem} in the context of optimal transport, for which we refer e.g. to \cite[Theorem 5.3.1]{AGS}.

\begin{thm}[Disintegration theorem]
\label{thm:Disintegration}
Given two elements $\mu,\nu \in \Pcal_p(\R^d)$ and $\gamma \in \Gamma(\mu,\nu)$, there exists a $\mu$-almost uniquely determined family of measures $\{ \gamma_x \}_{x \in \R^d} \subset \Pcal_p(\R^d)$ called the \textnormal{disintegration} of $\gamma$ onto its first marginal $\pi^1_{\#} \gamma = \mu$, such that
\begin{equation*}
\INTDom{\xi(x,y)}{\R^{2d}}{\gamma(x,y)} = \INTDom{\INTDom{\xi(x,y)}{\R^d}{\gamma_x(y)}}{\R^d}{\mu(x)},
\end{equation*}
for every map $\xi \in L^1(\R^{2d},\R;\gamma)$.
\end{thm}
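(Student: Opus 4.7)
The plan is to obtain the family $\{\gamma_x\}$ as a regular version of the conditional probability of $\pi^2$ given $\pi^1$ with respect to $\gamma$, then push the identity of the theorem from indicator functions to general $\xi \in L^1(\mathbb{R}^{2d},\R;\gamma)$ by a standard density argument. For uniqueness, I would suppose that two families $\{\gamma_x\}$ and $\{\tilde\gamma_x\}$ both satisfy the disintegration identity; choosing test functions of the form $\xi(x,y) = \mathbf{1}_A(x)\mathbf{1}_B(y)$ with $B$ ranging over a fixed countable generating algebra of the Borel $\sigma$-algebra of $\R^d$ (e.g.\ rational boxes) and $A$ ranging over the Borel sets shows $\gamma_x(B) = \tilde\gamma_x(B)$ for $\mu$-a.e.\ $x$ and every generator $B$ simultaneously, hence $\gamma_x = \tilde\gamma_x$ for $\mu$-almost every $x$.

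For existence, I would first build the candidate via Radon--Nikodym. For each Borel $B \subset \R^d$, the set function $A \mapsto \gamma(A\times B)$ is a finite Borel measure on $\R^d$ absolutely continuous with respect to $\mu$ (since its total mass on $\{ \mu=0\}$-sets is bounded by $\mu(A) = 0$), so it admits a density $f_B \in L^1(\mu)$ with $0 \le f_B \le 1$. Choosing a countable algebra $\Acal_0$ of Borel sets generating the Borel $\sigma$-algebra of $\R^d$, I would fix representatives $f_B$ for $B \in \Acal_0$ and observe that finite additivity and monotone continuity on $\Acal_0$ hold $\mu$-almost everywhere; after removing a single $\mu$-null set $N$, the map $B \mapsto f_B(x)$ is a premeasure on $\Acal_0$ for every $x \notin N$. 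Caratheodory's extension then yields a genuine Borel probability measure $\gamma_x$ on $\R^d$ for every $x \notin N$, which I would define arbitrarily (say $\gamma_x = \delta_0$) on $N$ to have a family defined pointwise on $\R^d$. The disintegration identity is immediate for $\xi = \mathbf{1}_A\mathbf{1}_B$, extends to linear combinations and then to general Borel bounded $\xi$ by monotone class, and finally to $\xi \in L^1(\gamma)$ by approximation.

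The main obstacle, and the reason the Polish structure of $\R^d$ is essential, is precisely this regularization step: the Radon--Nikodym densities $f_B$ are only defined up to $\mu$-null sets individually, and a priori the exceptional set depends on $B$, so one must exploit the countability of $\Acal_0$ to obtain a single $\mu$-null set $N$ outside of which the assignment $B \mapsto f_B(x)$ is simultaneously finitely additive and countably continuous on $\Acal_0$. Once the family $\{\gamma_x\}$ is in hand, membership in $\Pcal_p(\R^d)$ for $\mu$-a.e.\ $x$ follows by applying the disintegration identity to $\xi(x,y) = |y|^p$: one gets $\int \M_p^p(\gamma_x)\,\mathrm{d}\mu(x) = \M_p^p(\nu) < +\infty$, so $\gamma_x \in \Pcal_p(\R^d)$ except on a $\mu$-null set, on which one can redefine $\gamma_x$ to be any fixed element of $\Pcal_p(\R^d)$ without affecting the identity.
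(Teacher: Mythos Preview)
The paper does not prove this theorem at all: it is stated as a preliminary result with the reference ``for which we refer e.g.\ to \cite[Theorem 5.3.1]{AGS}'', so there is no in-paper proof to compare against. Your sketch is the standard construction of regular conditional probabilities via Radon--Nikodym and a countable generating algebra, and it is correct in outline; the uniqueness argument and the $\Pcal_p$-membership step are clean.

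One point deserves slightly more care than you give it. You write that ``finite additivity and monotone continuity on $\Acal_0$ hold $\mu$-almost everywhere'', and then invoke Carath\'eodory. Finite additivity is indeed a countable collection of identities once $\Acal_0$ is countable, so a single null set suffices. But monotone continuity at $\emptyset$ (equivalently, countable additivity on the algebra) involves \emph{all} decreasing sequences in $\Acal_0$ with empty intersection, and there are uncountably many such sequences even when $\Acal_0$ is countable; you cannot discard a null set per sequence. The standard repair---which is where the Polish structure really enters, beyond just giving you a countable $\Acal_0$---is either (a) to take $\Acal_0$ so that it contains a compact approximating class (e.g.\ finite unions of closed rational boxes) and verify, via countably many conditions, that $B \mapsto f_B(x)$ is inner regular with respect to that class for $\mu$-a.e.\ $x$, which forces countable additivity automatically; or (b) in $\R^d$ specifically, to bypass Carath\'eodory and build $\gamma_x$ directly from a conditional CDF $F(x,y)$ evaluated at rational $y$, checking the CDF axioms on a countable grid. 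Either route closes the gap, and your remark that the Polish structure is ``essential precisely at the regularization step'' is right---just for a slightly subtler reason than simultaneous finite additivity.
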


Reciprocally given $\mu \in \Pcal_c(\R^d)$ and a $\mu$-measurable family of measures $\{\gamma_x\}_{x \in \R^d} \subset \Pcal_c(\R^d)$, we shall sometimes write $\gamma := \INTDom{\gamma_x}{\R^d}{\mu(x)}$ to denote the unique element of $\Pcal_c(\R^{2d})$ satisfying 
\begin{equation*}
\INTDom{\xi(x,y)}{\R^{2d}}{\gamma(x,y)} = \INTDom{\INTDom{\xi(x,y)}{\R^d}{\gamma_x(y)}}{\R^d}{\mu(x)}
\end{equation*}
for every $\xi \in C^0_b(\R^{2d},\R)$. 


\subsection{Elements of set-valued analysis}

In this section, we recall several notions and classical results of set-valued and non-smooth analysis, for which we refer to the comprehensive monographs \cite{Aubin1984,Aubin1990}.

Let $(\Scal,d_{\Scal})$ be a complete separable metric space and $(X,\Norm{\cdot}_X)$ be a separable Banach space. We denote by $\B_{\Scal}(s,\beta)$ the closed ball of radius $\beta > 0$ centred at $s \in \Scal$, and by $\B_X$ the closed unit ball in $X$ centred at $0$. Given a subset $B \subset X$, we define its \textit{closed convex hull} $\co(B)$ as the closure of
\begin{equation*}
\textnormal{co}(B) := \bigg\{ \mathsmaller{\sum}\nolimits_{i=1}^N \alpha_i b_i ~\text{s.t.}~ N\geq 1,~ b_i \in B,~ \alpha_i \geq 0 ~\text{for $i \in \{1,\dots,N\}$ and}~ \mathsmaller{\sum}\nolimits_{i=1}^N \alpha_i = 1 \bigg\}  \subset X, 
\end{equation*}
and consider the \textit{distance function} to $B$ defined as $x \in X \mapsto \dist_X(x \, ; B) := \inf_{b \in B} \Norm{x-b}_X$. An application $\F : \Scal \rightrightarrows X$ is called a \textit{set-valued map} -- or a \textit{multifunction} -- from $\Scal$ into $X$ if $\F(s) \subset X$ for any $s \in \Scal$, and we denote by $D(\F) := \{ s \in \Scal ~\text{s.t.}~ \F(s) \neq \emptyset \}$ its \textit{effective domain}. A set-valued map $\F(\cdot)$ is said to have closed (resp. convex) images if the sets $\F(s)$ are closed (resp. convex) for any $s \in D(\F)$. In the following definitions, we recall the concepts of \textit{measurability} and \textit{Lipschitz regularity} for set-valued maps.

\begin{Def}[Measurable multifunctions]
Let $(\Scal,\Omega,\varpi)$ be a measure space defined over $\Scal$ and $\F : \Scal \rightrightarrows X$ be a set-valued map. Then, $\F(\cdot)$ is said to be \textnormal{$\varpi$-measurable} if
\begin{equation*}
\F^{-1}(\Opazo) := \Big\{ s \in \Scal ~\text{s.t.}~ \F(s) \cap \Opazo \neq \emptyset \Big\} \in \Omega,
\end{equation*}
for any open set $\Opazo \subset X$.
\end{Def}

\begin{Def}[Lipschitz continuity of multifunctions]
A set-valued map $\F : \Scal \rightrightarrows X$ is \textnormal{Lipschitz regular} around $s \in D(\F)$ with constant $L > 0$ if there exists a neighbourhood $\Npazo_s \subset \Scal$ of $s$ such that
\begin{equation*}
\F(s_1) \subset \F(s_2) + L \,d_{\Scal}(s_1,s_2) \B_X,
\end{equation*}
for any $s_1,s_2 \in \Npazo_s$.
\end{Def}

We end this primer in set-valued analysis by recalling the definition of \textit{tangent cone} to a closed convex subset of a Banach space, and provide one of its equivalent characterisations.

\begin{Def}[Tangent cone to a convex set]
\label{def:AdjacentConeBanach}
Let $K \subset X$ be a closed convex set and $x \in K$. The \textnormal{tangent cone} $T_K(x)$ to $K$ at $x$ is then defined by
\begin{equation*}
T_K(x) := \overline{\bigcup_{\lambda > 0} \lambda(K-x)}.
\end{equation*}
Moreover, the set $T_K(x)$ can be alternatively characterised as
\begin{equation}
\label{eq:AlternativeCone}
\begin{aligned}
T_K(x) & = \Big\{ w \in X ~\text{s.t.}~ \dist_X(x + \epsilon w ; K) = o(\epsilon) ~~ \text{for any $\epsilon > 0$} \Big\} \\
& = \bigg\{ w \in X ~\text{s.t. for every $\epsilon_n \underset{n \rightarrow +\infty}{\longrightarrow} 0^+$, there exist $w_n \underset{n \rightarrow +\infty}{\longrightarrow} w$ such that} \\
& \hspace{7.75cm} x + \epsilon_n w_n \in K ~~ \text{for all $n \geq 1$} \bigg\}.
\end{aligned}
\end{equation}
as a consequence e.g. of \cite[Proposition 4.2.1]{Aubin1990}.
\end{Def}


\subsection{Continuity equations and inclusions in Wasserstein spaces}
\label{subsection:WassInc}

In this section, we recollect notions pertaining to \textit{continuity equations} and \textit{continuity inclusions} in the space of measures. Given a time horizon $T > 0$, a curve $\mu : [0,T] \rightarrow \Pcal(\R^d)$ and a velocity-field $v : [0,T] \times \R^d \rightarrow \R^d$, we say that $(\mu(\cdot),v(\cdot))$ is a trajectory-velocity pair of the continuity equation
\begin{equation}
\label{eq:CE}
\partial_t \mu(t) + \Div \big( v(t) \mu(t) \big) = 0,
\end{equation}
if the following distributional equality
\begin{equation*}
\INTSeg{\INTDom{\Big( \partial_t \xi(t,x) + \big\langle \nabla_x \xi(t,x) , v(t,x) \big\rangle\Big)}{\R^d}{\mu(t)(x)}}{t}{0}{T} = 0,
\end{equation*}
holds for any test function $\xi \in C^{\infty}_c((0,T) \times \R^d)$. In what follows, we will mainly consider continuity equations driven by \textit{Carathéodory} velocity-fields, namely maps such that $t \in [0,T] \mapsto v(t,x) \in \R^d$ is $\Lcal^1$-measurable for any $x \in \R^d$ and $x \in \R^d \mapsto v(t,x) \in \R^d$ is continuous for $\Lcal^1$-almost every $t \in [0,T]$.

In the recent work \cite{ContInc}, we introduced a \textit{set-valued} extension of \eqref{eq:CE}, in the case where the velocity-field is \textit{non-local} i.e. when it also depends on the whole measure $\mu(t)$ at all times $t \in [0,T]$. Based on the identification of non-local velocity-fields
\begin{equation*}
(t,\mu,x) \in [0,T] \times \Pcal_c(\R^d) \times \R^d \mapsto v(t,\mu,x) \in \R^d,
\end{equation*}
with vector-field valued maps
\begin{equation*}
(t,\mu) \in [0,T] \times \Pcal_c(\R^d) \mapsto v(t,\mu,\cdot) \in C^0(\R^d,\R^d),
\end{equation*}
we proposed the following notion of differential inclusion for continuity equations.

\begin{Def}[Continuity inclusions in measure spaces]
\label{def:WassInc}
Let $V : [0,T] \times \Pcal_c(\R^d) \rightrightarrows C^0(\R^d,\R^d)$ be a set-valued map. We say that a curve of measures $\mu(\cdot)$ solves the \textnormal{continuity inclusion}
\begin{equation}
\label{eq:WassIncDef}
\partial_t \mu(t) \in - \Div \Big( V(t,\mu(t)) \mu(t) \Big),
\end{equation}
if there exists an $\Lcal^1$-measurable selection $t \in [0,T] \mapsto \vb(t) \in V(t,\mu(t))$ such that the \textnormal{trajectory-selection pair} $(\mu(\cdot),\vb(\cdot))$ solves the continuity equation
\begin{equation}
\label{eq:ContinuityEqInc}
\partial_t \mu(t) + \Div \big( \vb(t) \mu(t) \big) = 0,
\end{equation}
in the sense of distributions.
\end{Def}

In what follows, we introduce technical notions pertaining to \textit{compact restrictions} of multifunctions with values in $C^0(\R^d,\R^d)$, and proceed by stating our working assumptions. We will henceforth denote by $\Norm{\cdot}_1 := \NormL{\cdot}{1}{[0,T],\R}$ the $L^1$-norm of a real-valued functional defined over $[0,T]$.

\begin{Def}[Restriction of multifunctions to balls]
\label{def:Compactification}
Given a set-valued map $V : [0,T] \times \Pcal_c(\R^d) \rightrightarrows C^0(\R^d,\R^d)$ and $R > 0$, we define its \textnormal{compact restriction} to $K := B(0,R)$ by
\begin{equation*}
V_K(t,\mu) := \Big\{ v_{\vert K} \in C^0(K,\R^d) ~\text{s.t.}~ v \in V(t,\mu) \Big\},
\end{equation*}
for all $(t,\mu) \in [0,T] \times \Pcal_c(\R^d)$. Here, $v_{\vert K}$ denotes the restriction to $K$ of the map $v \in C^0(\R^d,\R^d)$. Conversely, observe that any function $v \in C^0(K,\R^d)$ can be extended to $\tilde{v} \in C^0(\R^d,\R^d)$ by defining $\tilde{v}(x) := v(\pi_K(x))$ for all $x \in \R^d$, where $\pi_K : \R^d \rightarrow K$ denotes the projection onto $K$.

We also define the \textnormal{closed convex hull} $\co V(t,\mu)$ of $V(t,\mu) \subset C^0(\R^d,\R^d)$ as
\begin{equation}
\label{eq:ConvbarDef}
\co V(t,\mu) := \Big\{ v \in C^0(\R^d,\R^d) ~\text{s.t.}~ v_{\vert K} \in \co V_K(t,\mu) ~\text{for any $K := B(0,R)$ with $R>0$}\Big\},
\end{equation}
where the closed convex hull $\co V_K(t,\mu)$ is taken in the Banach space $\big(C^0(K,\R^d),\NormC{\cdot}{0}{K,\R^d} \big)$.
\end{Def}

Throughout the remainder of this section, we fix a time horizon $T > 0$ and a real $p \in [1,+\infty)$.

\begin{taggedhyp}{\textbn{(DI)}}
\label{hyp:DI}
For every $R > 0$, assume that the following holds with $K := B(0,R)$.

\begin{enumerate}
\item[$(i)$] For any $\mu \in \Pcal_c(\R^d)$, the set-valued map $t \in [0,T] \rightrightarrows V_K(t,\mu)$ is $\Lcal^1$-measurable with closed non-empty images in $C^0(K,\R^d)$.
\item[$(ii)$] There exists a map $m(\cdot) \in L^1([0,T],\R_+)$ such that for $\Lcal^1$-almost every $t \in [0,T]$, any $\mu \in \Pcal_c(\R^d)$, every $v \in V(t,\mu)$ and all $x \in \R^d$, it holds
\begin{equation*}
|v(x)| \leq m(t) \Big(1+|x|+\M_1(\mu) \Big).
\end{equation*}
\item[$(iii)$] There exists a map $l_K(\cdot) \in L^1([0,T],\R_+)$ such that for $\Lcal^1$-almost every $t \in [0,T]$, any $\mu \in \Pcal(K)$ and every $v \in V(t,\mu)$, it holds
\begin{equation*}
\Lip \big(v(\cdot) \,;K \big) \leq l_K(t).
\end{equation*}
\item[$(iv)$] There exists a map $L_K(\cdot) \in L^1([0,T],\R_+)$ such that for $\Lcal^1$-almost every $t \in [0,T]$ and any $\mu,\nu \in \Pcal(K)$, it holds
\begin{equation*}
V_K(t,\nu) \subset V_K(t,\mu) + L_K(t) W_p(\mu,\nu) \B_{C^0(K,\R^d)}.
\end{equation*}
\end{enumerate}
\end{taggedhyp}

\begin{rmk}[Non-local continuity equations as a particular case of continuity inclusions]
\label{rmk:NonLocalCE}
Suppose that the multifunction $V : [0,T] \times \Pcal_c(\R^d) \rightrightarrows C^0(\R^d,\R^d)$ is \textnormal{single-valued} for all $(t,\mu) \in [0,T] \times \Pcal_c(\R^d)$. Then, solutions of \eqref{eq:WassIncDef} coincide with those of the non-local continuity equation
\begin{equation}
\label{eq:NonLocalCE}
\partial_t \mu(t) + \Div \big( v(t,\mu(t)) \mu(t) \big) = 0,
\end{equation}
where $v(t,\mu,x) := V(t,\mu)(x)$ for all $(t,\mu,x) \in [0,T] \times \Pcal_c(\R^d) \times \R^d$.
\end{rmk}

In the scenario described in Remark \ref{rmk:NonLocalCE}, hypotheses \ref{hyp:DI} become a localised and time-dependent generalisation of that of \cite[Section 1]{Pedestrian} in the spirit of Carathéodory ODEs. We state these assumptions separately for the sake of clarity, as they will be frequently used in the sequel.

\begin{taggedhyp}{\textbn{(CE)}}
\label{hyp:CE}
For every $R > 0$, suppose that the following holds with $K := B(0,R)$.
\begin{enumerate}
\item[$(i)$] The application $t \in [0,T] \mapsto v(t,\mu,x) \in \R^d$ is $\Lcal^1$-measurable for any $(\mu,x) \in \Pcal_c(\R^d) \times \R^d$. Moreover, there exists a map $m(\cdot) \in L^1([0,T],\R_+)$ such that
\begin{equation*}
|v(t,\mu,x)| \leq m(t) \Big( 1 + |x| + \M_1(\mu) \Big),
\end{equation*}
for $\Lcal^1$-almost every $t \in [0,T]$ and any $(\mu,x) \in \Pcal_c(\R^d) \times \R^d$.
\item[$(ii)$] There exist two maps $l_K(\cdot),L_K(\cdot) \in L^1([0,T],\R_+)$ such that for $\Lcal^1$-almost every $t \in [0,T]$,  any $\mu,\nu \in \Pcal(K)$ and all $x,y \in K$, it holds
\begin{equation*}
\big| v(t,\mu,x) - v(t,\mu,y) \big| \leq l_K(t) |x-y| \qquad \text{and} \qquad \big| v(t,\mu,x) - v(t,\nu,x) \big| \leq L_K(t) W_p(\mu,\nu).
\end{equation*}
\end{enumerate}
\end{taggedhyp}

\begin{rmk}[Local variant of hypotheses \ref{hyp:CE}]
In the sequel, we say that a Carathéodory vector-field $w: [0,T] \times \R^d \rightarrow \R^d$ which is \textnormal{independent from} $\mu(\cdot)$ satisfies hypotheses \ref{hyp:CE} if it is sub-linear and locally Lipschitz with respect to the space variable, with constants $m(\cdot),l_K(\cdot) \in L^1([0,T],\R_+)$.
\end{rmk}

In what follows, we recall structural results for continuity inclusions in Wasserstein spaces which were derived in \cite{ContInc}. From now on, we suppose that $V : [0,T] \times \Pcal_c(\R^d) \rightrightarrows C^0(\R^d,\R^d)$ is a set-valued map satisfying hypotheses \ref{hyp:DI}, and consider $\Pcal_c(\R^d)$ as a subset of the metric space $(\Pcal_p(\R^d),W_p)$.

\begin{thm}[Existence and estimates on solutions of continuity inclusions]
\label{thm:ExistenceWass}
For every $\mu^0 \in \Pcal_c(\R^d)$, there exists a curve of measures $\mu(\cdot) \in \AC([0,T],\Pcal_c(\R^d))$ solution of the continuity inclusion \eqref{eq:WassIncDef} such that $\mu(0) = \mu^0$. Moreover if $\mu^0 \in \Pcal(B(0,r))$ for some $r > 0$, then there exist $R_r > 0$ and $m_r(\cdot) \in L^1([0,T],\R_+)$ depending only on the magnitudes of $r,\Norm{m(\cdot)}_1$ such that
\begin{equation}
\label{eq:SuppAC_Est}
\supp(\mu(t)) \subset B(0,R_r) \qquad \text{and} \qquad W_p(\mu(t),\mu(s)) \leq \INTSeg{m_r(\tau)}{\tau}{s}{t},
\end{equation}
for every solution $\mu(\cdot)$ of \eqref{eq:WassIncDef} and all times $0 \leq s \leq t \leq T$.
\end{thm}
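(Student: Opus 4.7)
The plan is to prove the two claims in reverse order: first derive the a priori estimates on \emph{any} candidate solution, and then construct one by an Euler-type scheme so that those bounds hold uniformly along the discretisation.

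\emph{A priori estimates.} Let $(\mu(\cdot),\vb(\cdot))$ be a solution of \eqref{eq:WassIncDef} with $\mu^0 \in \Pcal(B(0,r))$. Hypothesis \ref{hyp:DI}-$(iii)$ makes $(t,x) \mapsto \vb(t,x)$ a Carath\'eodory field that is locally Lipschitz in $x$, so its flow $\Phi^{\vb}_{(0,t)}$ is well-defined and the superposition principle yields $\mu(t) = \Phi^{\vb}_{(0,t)} \# \mu^0$. Setting $R(t) := \sup \{ |\Phi^{\vb}_{(0,t)}(x)| : x \in \supp(\mu^0) \}$ and noting $\M_1(\mu(t)) \leq R(t)$, the sublinear growth \ref{hyp:DI}-$(ii)$ gives
\begin{equation*}
R(t) \leq r + \INTSeg{m(\tau)\bigl(1+2R(\tau)\bigr)}{\tau}{0}{t},
\end{equation*}
and Gronwall's lemma produces the constant $R_r$ depending only on $r$ and $\Norm{m(\cdot)}_1$. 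Next, using the canonical coupling $(\Id,\Phi^{\vb}_{(s,t)})_{\#} \mu(s) \in \Gamma(\mu(s),\mu(t))$ together with the pointwise bound $|\Phi^{\vb}_{(s,t)}(y) - y| \leq \INTSeg{m(\tau)(1+2R_r)}{\tau}{s}{t}$ valid on $\supp(\mu(s)) \subset B(0,R_r)$, one obtains the $W_p$-continuity estimate with $m_r(\tau) := m(\tau)(1+2R_r)$.

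\emph{Existence via an Euler scheme.} For each $N \geq 1$, partition $[0,T]$ into $N$ equal subintervals $[t_i^N,t_{i+1}^N]$ and construct $\mu_N(\cdot)$ recursively. Given $\mu_N(t_i^N)$, the $\Lcal^1$-measurability and closed non-empty image condition in \ref{hyp:DI}-$(i)$ combined with the Kuratowski--Ryll-Nardzewski theorem yields an $\Lcal^1$-measurable selection $\vb_i^N(\cdot) \in V(\cdot,\mu_N(t_i^N))$; I then solve the resulting \emph{local} Carath\'eodory continuity equation $\partial_t \mu_N + \Div(\vb_i^N \mu_N) = 0$ on $[t_i^N,t_{i+1}^N]$ by pushforward of its flow starting from $\mu_N(t_i^N)$. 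The a priori estimates apply uniformly in $N$, so $\supp(\mu_N(t)) \subset B(0,R_r)$ and the family is $W_p$-equicontinuous with the same modulus. Since the compactly-supported probability measures on $B(0,R_r)$ form a compact metric space under $W_p$, Ascoli--Arzel\`a extracts a uniform limit $\mu_N \to \mu$ in $C^0([0,T],\Pcal_p(\R^d))$ along a subsequence.

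\emph{Main obstacle: identifying the limit selection.} To show that $\mu(\cdot)$ itself solves \eqref{eq:WassIncDef}, I would introduce the $\R^d$-valued Borel measures $\boldsymbol{E}_N := \tilde{\vb}_N(\tau,\cdot) \mu_N(\tau) \otimes \textnormal{d}\tau$ on $[0,T] \times B(0,R_r)$ (with $\tilde{\vb}_N$ concatenating the $\vb_i^N$), uniformly bounded in total variation by \ref{hyp:DI}-$(ii)$, and extract a weak-$*$ limit $\boldsymbol{E}$. A standard disintegration argument, exploiting that $|\boldsymbol{E}_N|$ is dominated by $m_r(\tau)\mu_N(\tau) \otimes \textnormal{d}\tau$ and that $\mu_N \to \mu$ uniformly, then writes $\boldsymbol{E} = \vb(\tau,\cdot)\mu(\tau) \otimes \textnormal{d}\tau$ for a measurable $\vb$ such that $(\mu(\cdot),\vb(\cdot))$ satisfies \eqref{eq:ContinuityEqInc} distributionally. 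The genuinely delicate step -- the one I expect to be the main obstacle -- is to certify $\vb(\tau,\cdot) \in V(\tau,\mu(\tau))$ for $\Lcal^1$-a.e. $\tau$ \emph{without any convexity assumption} on the images of $V$. This is precisely where the Lipschitz estimate \ref{hyp:DI}-$(iv)$ is indispensable: it implies $\tilde{\vb}_N(\tau)_{\vert K} \in V_K(\tau,\mu(\tau)) + L_K(\tau) W_p(\mu_N(t_{i_N(\tau)}^N),\mu(\tau)) \B_{C^0(K,\R^d)}$, and both the mesh size and the uniform convergence $\mu_N \to \mu$ drive the error to zero uniformly in $\tau$; the strong closedness of $V_K(\tau,\mu(\tau))$ from \ref{hyp:DI}-$(i)$ then transfers to the limit and closes the argument.
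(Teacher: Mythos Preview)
The paper does not prove this statement in the text; it is recalled from the authors' companion article \cite{ContInc}. Your overall architecture---a~priori Gronwall bounds followed by an explicit Euler scheme and Ascoli--Arzel\`a compactness---is the natural one, and the a~priori estimates and the construction of the equicontinuous family $(\mu_N)$ are correct.

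The gap is in your final paragraph, and it is exactly the step you flag as the main obstacle: your proposed resolution does not work. You extract the limit velocity $\vb$ by disintegrating the weak-$*$ limit of the momentum measures $\boldsymbol{E}_N = \tilde{\vb}_N \mu_N \otimes \textnormal{d}\tau$, and then assert that since each $\tilde{\vb}_N(\tau)$ lies within a vanishing $C^0(K)$-distance of $V_K(\tau,\mu(\tau))$, the closedness of the latter forces $\vb(\tau) \in V_K(\tau,\mu(\tau))$. But weak-$*$ convergence of $\boldsymbol{E}_N$ does \emph{not} yield $C^0(K)$-convergence of $\tilde{\vb}_N(\tau)$ to $\vb(\tau)$, and non-convex closed sets are not stable under weak limits: if $V(\tau,\mu) \equiv \{v_1,v_2\}$ for two fixed fields, an oscillating Euler selection produces the weak limit $\tfrac{1}{2}(v_1+v_2) \notin V$. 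As written, your scheme therefore only produces a solution of the \emph{relaxed} inclusion with right-hand side $\co V$. To recover a genuine selection without convexity, closedness alone is not enough; you must exploit that each $V_K(\tau,\mu(\tau))$ is \emph{compact} in $C^0(K,\R^d)$ (equi-bounded by \ref{hyp:DI}-$(ii)$, equi-Lipschitz by \ref{hyp:DI}-$(iii)$), and either upgrade the compactness of $(\tilde{\vb}_N)$ to a mode strong enough to preserve pointwise-in-$\tau$ membership, or---more in the spirit of \cite{ContInc}---first establish a Filippov-type correction lemma and use it to replace the approximate pairs $(\mu_N,\tilde{\vb}_N)$ by exact trajectory-selection pairs of \eqref{eq:WassIncDef} whose mutual $W_p$-distance is controlled by the mismatch $\Norm{\eta_N(\cdot)}_1 \to 0$.
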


In the following corollary, we state elementary consequences of Theorem \ref{thm:ExistenceWass} in the case where $V(\cdot,\cdot)$ is single-valued. We present this result separately for the sake of clarity, as we shall frequently use this variant in the sequel. Its last statement follows e.g. from \cite[Chapter 8]{AGS} and \cite{Pedestrian}.

\begin{cor}[Non-local continuity equations]
\label{cor:NonLocalPDE}
In Theorem \ref{thm:ExistenceWass} suppose that $V : [0,T] \times \Pcal_c(\R^d) \rightrightarrows C^0(\R^d,\R^d)$ is single-valued, and let $\mu^0 \in \Pcal(B(0,r))$ for some $r > 0$. Then the non-local velocity-field $v : [0,T] \times \Pcal_c(\R^d) \times \R^d \rightarrow \R^d$ defined for all $(t,\mu,x) \in [0,T] \times \Pcal_c(\R^d) \times \R^d$ by
\begin{equation*}
v(t,\mu,x) := V(t,\mu)(x),
\end{equation*}
satisfies hypotheses \ref{hyp:CE}, and the non-local Cauchy problem \eqref{eq:NonLocalCE} admits a unique solution $\mu(\cdot) \in \AC([0,T],\Pcal_c(\R^d))$. Moreover, the curve $\mu(\cdot)$ satisfies the estimates of \eqref{eq:SuppAC_Est} and can be represented as
\begin{equation}
\label{eq:ExpressionMeasure}
\mu(t) = \Phi_{(0,t)}[\mu^0](\cdot)_{\#} \mu^0,
\end{equation}
for all times $t \in [0,T]$. Here, $(\Phi_{(s,t)}[\mu(s)](\cdot))_{s,t \in [0,T]}$ denotes the family of \textnormal{non-local flows} defined by
\begin{equation}
\label{eq:FlowExp1}
\Phi_{(s,t)}[\mu(s)](x) = x + \INTSeg{ v \Big( \sigma , \mu(\sigma) , \Phi_{(s,\sigma)}[\mu(s)](x) \Big)}{\sigma}{s}{t},
\end{equation}
for all times $s,t \in [0,T]$ and any $x \in \R^d$.
\end{cor}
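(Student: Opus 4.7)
The plan is to treat the four claims in sequence: first, the derivation of hypotheses \ref{hyp:CE} from the single-valued restriction of \ref{hyp:DI}; second, existence via Theorem \ref{thm:ExistenceWass}; third, uniqueness via a Grönwall-type stability argument; fourth, the flow representation \eqref{eq:ExpressionMeasure}--\eqref{eq:FlowExp1} through a classical characteristics argument as in \cite[Chapter 8]{AGS}.

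For the first step, the verification is essentially mechanical. When $V(t,\mu)$ reduces to the singleton $\{v(t,\mu,\cdot)\}$, the $\Lcal^1$-measurability of $t \mapsto V_K(t,\mu)$ provided by \ref{hyp:DI}$(i)$ is equivalent to the standard measurability of the selection $t \mapsto v(t,\mu,\cdot)_{\vert K}$ as a $C^0(K,\R^d)$-valued map, and composing with the continuous evaluation functional $\varphi \mapsto \varphi(x)$ yields the pointwise measurability required in \ref{hyp:CE}$(i)$. The sub-linear bound of \ref{hyp:CE}$(i)$ is a direct rewriting of \ref{hyp:DI}$(ii)$, while the Lipschitz estimates of \ref{hyp:CE}$(ii)$ follow respectively from \ref{hyp:DI}$(iii)$ and \ref{hyp:DI}$(iv)$ by taking the supremum over $K$ after unfolding the $C^0(K,\R^d)$-norm in the latter inclusion.

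Step two is immediate from Theorem \ref{thm:ExistenceWass} applied to $V$: one obtains an absolutely continuous solution $\mu(\cdot)$ of \eqref{eq:NonLocalCE} satisfying the localisation and Wasserstein estimates \eqref{eq:SuppAC_Est}. For the uniqueness step, given two solutions $\mu_1(\cdot), \mu_2(\cdot)$ sharing the initial datum $\mu^0 \in \Pcal(B(0,r))$, those estimates ensure that both curves remain in $\Pcal(B(0,R_r))$. Freezing $\mu_i(\cdot)$ yields a Carathéodory vector-field $w_i(t,x) := v(t,\mu_i(t),x)$ which is sub-linear and locally Lipschitz in space thanks to \ref{hyp:CE}, hence generating a unique characteristic flow $\Phi^i_{(0,t)}(\cdot)$. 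By the superposition principle of \cite[Chapter 8]{AGS}, each $\mu_i(t)$ coincides with $\Phi^i_{(0,t)}(\cdot)_{\#}\mu^0$, so that estimating the $W_p$-distance with the diagonal plan and applying Grönwall's inequality to $|\Phi^1_{(0,t)}(x) - \Phi^2_{(0,t)}(x)|$ --- using both the spatial Lipschitz constant $l_{B(0,R_r)}(\cdot)$ and the non-local Lipschitz constant $L_{B(0,R_r)}(\cdot)$ to close the loop --- forces $W_p(\mu_1(t),\mu_2(t)) \equiv 0$.

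The final step then follows at once by applying the same frozen-field argument to the now unique solution $\mu(\cdot)$: the field $w(t,x) := v(t,\mu(t),x)$ is Carathéodory, sub-linear, and locally Lipschitz in $x$, so the characteristic ODE $\dot{y}(t) = w(t,y(t))$ with $y(0) = x$ admits a unique global solution $t \mapsto \Phi_{(0,t)}[\mu^0](x)$ obeying the integral identity \eqref{eq:FlowExp1}, and its pushforward by $\mu^0$ coincides with $\mu(t)$ thanks to the superposition result of \cite[Chapter 8]{AGS}. The main subtlety lies in the uniqueness step, where the non-local coupling forces a simultaneous Grönwall control on characteristics and $W_p$-distance; this is precisely where the $W_p$-Lipschitz dependence of the velocity on its measure argument --- equivalently \ref{hyp:DI}$(iv)$ --- is indispensable.
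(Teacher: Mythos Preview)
Your proposal is correct and matches the paper's approach: the paper does not provide a detailed proof of this corollary, merely noting that it consists of ``elementary consequences'' of Theorem~\ref{thm:ExistenceWass} together with the flow representation from \cite[Chapter~8]{AGS} and \cite{Pedestrian}. Your sketch expands precisely these standard steps --- the mechanical reduction of \ref{hyp:DI} to \ref{hyp:CE} in the single-valued case, existence from Theorem~\ref{thm:ExistenceWass}, uniqueness via a Gr\"onwall argument on the frozen characteristics, and the superposition-based flow representation --- and cites the same source for the last step.
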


We end this section by recalling three of the main results of \cite{ContInc}, which are generalisations to the setting of continuity inclusions in Wasserstein spaces of Filippov's estimates, the Relaxation theorem, and the one-to-one correspondence between control systems and continuity inclusions.

\begin{thm}[Filippov's theorem]
\label{thm:FilippovEstimate}
Let $\nu(\cdot) \in \AC([0,T],\Pcal(K_{\nu}))$ be a solution of \eqref{eq:CE} generated by a Carathéodory velocity-field $w : [0,T] \times \R^d \rightarrow \R^d$ with $K_{\nu} := B(0,R_{\nu})$ for some $R_{\nu} > 0$. Furthermore, suppose that the \textnormal{mismatch function} $\eta_{\nu} : [0,T] \rightarrow \R_+$ defined by
\begin{equation*}
\eta_{\nu}(t) := \dist_{C^0(K_{\nu},\R^d)} \Big( w_{\vert K_{\nu}}(t) ; V_{K_{\nu}}(t,\nu(t)) \Big),
\end{equation*}
is Lebesgue integrable over $[0,T]$. Then for any $\mu^0 \in \Pcal(B(0,r))$ with $r > 0$, there exists a solution $\mu(\cdot) \in \AC([0,T],\Pcal_c(\R^d))$ of the continuity inclusion \eqref{eq:WassIncDef} such that for all times $t \in [0,T]$, it holds
\begin{equation*}
W_p(\mu(t),\nu(t)) \leq C_p \left( W_p(\mu^0,\nu(0)) + \INTSeg{\eta_{\nu}(s)}{s}{0}{t} \right),
\end{equation*}
where $C_p > 0$ is a constant which depends only on the magnitudes of $p,r,R_{\nu},T$ and $\Norm{m(\cdot)}_1$.
\end{thm}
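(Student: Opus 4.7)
The proof follows the classical \emph{Filippov iteration scheme}, transposed to the Wasserstein setting via the flow representation \eqref{eq:ExpressionMeasure} of Corollary \ref{cor:NonLocalPDE}. The plan is to construct inductively a sequence of trajectory-selection pairs $(\mu_n(\cdot), \vb_n(\cdot))$ -- with $\vb_n(t) \in V(t, \mu_n(t))$ a measurable selection and $\mu_{n+1}(\cdot)$ the solution of the Carathéodory continuity equation $\partial_t \mu_{n+1} + \Div(\vb_n \mu_{n+1}) = 0$ with datum $\mu^0$ -- which is Cauchy in $C^0([0,T], (\Pcal_p(\R^d), W_p))$; the limit pair will then furnish the sought solution of the continuity inclusion.

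The scheme begins with a support reduction: Theorem \ref{thm:ExistenceWass} applied to candidate solutions of \eqref{eq:WassIncDef} starting from $\Pcal(B(0,r))$, combined with the a priori bound $\supp \nu(t) \subset K_\nu$, yields a radius $R > 0$ depending only on $r, R_\nu, T, \Norm{m(\cdot)}_1$ such that every iterate is supported in $K := B(0,R)$, so that the Lipschitz data $l_K(\cdot), L_K(\cdot) \in L^1([0,T], \R_+)$ from \ref{hyp:DI}(iii)--(iv) apply uniformly. I set $\mu_0(\cdot) := \nu(\cdot)$ and, using the Kuratowski--Ryll-Nardzewski measurable selection theorem, pick a measurable $\vb_0(t) \in V(t, \nu(t))$ with $\|\vb_0(t)_{|K_\nu} - w_{|K_\nu}(t)\|_{C^0(K_\nu,\R^d)} \leq \eta_\nu(t) + 1$, which is feasible by the very definition of $\eta_\nu(\cdot)$; after extension outside $K$ via composition with $\pi_K$ (Definition \ref{def:Compactification}), Corollary \ref{cor:NonLocalPDE} produces $\mu_1(\cdot)$. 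Inductively, using a Filippov-type measurable selection together with \ref{hyp:DI}(iv) applied between $\mu_n, \mu_{n-1} \in \Pcal(K)$, I pick $\vb_n(t) \in V(t, \mu_n(t))$ with
\begin{equation*}
\|\vb_n(t)_{|K} - \vb_{n-1}(t)_{|K}\|_{C^0(K,\R^d)} \leq L_K(t) \, W_p(\mu_n(t), \mu_{n-1}(t)) + 2^{-n},
\end{equation*}
and generate $\mu_{n+1}(\cdot)$ as above.

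Convergence stems from a Gronwall argument on the spatial ODE in \eqref{eq:FlowExp1}: since the flows associated with $\vb_n$ share the spatial Lipschitz constant $l_K(\cdot)$, a pointwise comparison followed by integration against $\mu^0$ yields $W_p(\mu_{n+1}(t), \mu_n(t)) \leq C_p \int_0^t \|\vb_n(s) - \vb_{n-1}(s)\|_{C^0(K,\R^d)} \textnormal{d} s$ for a constant $C_p$ depending only on $p, r, R_\nu, T, \Norm{m(\cdot)}_1$. The same argument applied between $\mu_1(\cdot)$ and $\nu(\cdot)$ (where initial data differ and the velocity mismatch on $K_\nu$ is governed by $\eta_\nu$) gives the base estimate $W_p(\mu_1(t), \nu(t)) \leq C_p \bigl(W_p(\mu^0, \nu(0)) + \int_0^t \eta_\nu(s) \textnormal{d} s\bigr)$. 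Plugging the selection bound into the stability estimate and iterating shows that the $n$-th increment is dominated by $(C_p \Norm{L_K(\cdot)}_1)^n/n!$ times the base gap, which is summable; hence $\{\mu_n\}$ converges uniformly in $W_p$ to some $\mu(\cdot) \in \AC([0,T], \Pcal_c(\R^d))$, and (a subsequence of) $\{\vb_n|_K\}$ is Cauchy in $L^1([0,T], C^0(K,\R^d))$, with limit $\vb(t)_{|K} \in V_K(t, \mu(t))$ for almost every $t$ by closedness of $V_K(t, \cdot)$ inherited from \ref{hyp:DI}(iv). Lifting $\vb$ back to $V(t, \mu(t))$ and summing the telescoping stability chain produces the announced inequality. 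The most delicate point is the coherent joint measurability of the iterated selections and the uniform support control; both are addressed by working throughout in the separable Banach space $C^0(K,\R^d)$ and by invoking the a priori bounds of Theorem \ref{thm:ExistenceWass}.
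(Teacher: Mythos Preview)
The paper does not give its own proof of this statement: Theorem~\ref{thm:FilippovEstimate} is one of the results \emph{recalled} from the companion article \cite{ContInc} (see the sentence immediately preceding the theorem), so there is no in-paper argument to compare against. That said, your sketch is precisely the classical Filippov iteration adapted to the Wasserstein setting, which is the natural route and is in line with how such results are established in \cite{ContInc}.

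One quantitative slip to fix: in your first selection step you take $\vb_0(t)$ with $\|\vb_0(t)_{|K_\nu} - w_{|K_\nu}(t)\|_{C^0(K_\nu,\R^d)} \leq \eta_\nu(t) + 1$, yet you then claim the base estimate $W_p(\mu_1(t),\nu(t)) \leq C_p\bigl(W_p(\mu^0,\nu(0)) + \int_0^t \eta_\nu(s)\,\textnormal{d}s\bigr)$ without the extra ``$+1$'' contribution. As written this does not follow. The clean repair is to observe that under \ref{hyp:DI}$(ii)$--$(iii)$ the set $V_{K_\nu}(t,\nu(t))$ consists of uniformly bounded, equi-Lipschitz maps and is therefore \emph{compact} in $C^0(K_\nu,\R^d)$ by Arzel\`a--Ascoli; hence the infimum defining $\eta_\nu(t)$ is attained and you may select $\vb_0(t)$ realising $\|\vb_0(t)_{|K_\nu} - w_{|K_\nu}(t)\|_{C^0(K_\nu,\R^d)} = \eta_\nu(t)$ exactly (measurably, via \cite[Theorem~8.2.11]{Aubin1990}). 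With that correction the base estimate and the remainder of your argument go through.
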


\begin{thm}[Relaxation theorem]
\label{thm:RelaxationWass}
Let $\mu^0 \in \Pcal_c(\R^d)$ and $\mu(\cdot) \in \AC([0,T],\Pcal_c(\R^d))$ be a solution of the \textnormal{relaxed continuity inclusion}
\begin{equation*}
\left\{
\begin{aligned}
& \partial_t \mu(t) \in - \Div \Big( \co V(t,\mu(t)) \mu(t) \Big), \\
& \mu(0) = \mu^0,
\end{aligned}
\right.
\end{equation*}
where $\co V(t,\mu(t))$ denotes the closed convex hull of $V(t,\mu(t)) \subset C^0(\R^d,\R^d)$ defined as in \eqref{eq:ConvbarDef}. Then for any $\delta > 0$, there exists a solution $\mu_{\delta}(\cdot) \in \AC([0,T],\Pcal_c(\R^d))$ of the continuity inclusion
\begin{equation*}
\left\{
\begin{aligned}
& \partial_t \mu_{\delta}(t) \in - \Div \Big( V(t,\mu_{\delta}(t)) \mu_{\delta}(t) \Big), \\
& \mu_{\delta}(0) = \mu^0,
\end{aligned}
\right.
\end{equation*}
such that
\begin{equation*}
W_p(\mu(t),\mu_{\delta}(t)) \leq \delta,
\end{equation*}
for all times $t \in [0,T]$.
\end{thm}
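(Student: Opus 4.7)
The overall strategy mirrors the classical relaxation argument for differential inclusions in $\R^d$: given the relaxed trajectory, we build a measurable selection from the non-convexified multifunction whose integral is close to that of the relaxed selection, transport the initial datum by this chattering velocity field to get an auxiliary curve, and finally close the loop by invoking Filippov's theorem. First, since $\mu^0 \in \Pcal(B(0,r))$ for some $r > 0$ and $\co V(\cdot,\cdot)$ inherits the bounds of hypotheses \ref{hyp:DI}, Theorem \ref{thm:ExistenceWass} applied to the relaxed inclusion yields a radius $R_r > 0$ such that $\supp(\mu(t)) \subset K := B(0,R_r)$ for every $t \in [0,T]$, and we fix a Carathéodory selection $\bar v(t) \in \co V(t,\mu(t))$ driving $\mu(\cdot)$ (i.e. $\partial_t \mu + \Div(\bar v \mu) = 0$); by Definition \ref{def:Compactification}, the restriction $\bar v_{\vert K}(t)$ lies in $\co V_K(t,\mu(t))$ for a.e. $t$.

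The main step is a chattering construction adapted to the Banach-space-valued setting. Given $\varepsilon > 0$, take a partition $0 = t_0 < t_1 < \dots < t_N = T$ with mesh $h := \max_i(t_i - t_{i-1})$ to be chosen small, and on each sub-interval $[t_{i-1},t_i]$ apply a Carathéodory-type decomposition in the Banach space $C^0(K,\R^d)$ together with a measurable selection theorem to write $\bar v(t)$, for a.e. $t$, as a convex combination of finitely many $\Lcal^1$-measurable selections $v_{i,1}(t),\dots,v_{i,k}(t) \in V_K(t,\mu(t))$. We then build a measurable \emph{zigzag} selection $w(\cdot)$ of $V(\cdot,\mu(\cdot))$ by alternating between these $v_{i,j}(\cdot)$ on sub-intervals of $[t_{i-1},t_i]$ whose lengths match the convex weights; the extension from $K$ to $\R^d$ is performed via $\pi_K$ as in Definition \ref{def:Compactification}. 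Standard Riemann-sum arguments then give
\begin{equation*}
\sup_{t \in [0,T]} \Big\| \INTSeg{\big( w(s) - \bar v(s) \big)}{s}{0}{t} \Big\|_{C^0(K,\R^d)} \; \leq \; \omega(h),
\end{equation*}
with $\omega(h) \to 0$ as $h \to 0$.

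Next, we transport $\mu^0$ along $w$: by Corollary \ref{cor:NonLocalPDE} (with $w$ treated as a $\mu$-independent Carathéodory field) there exists a unique curve $\tilde \mu(\cdot) \in \AC([0,T],\Pcal_c(\R^d))$ solving $\partial_t \tilde\mu(t) + \Div(w(t) \tilde\mu(t)) = 0$ with $\tilde \mu(0) = \mu^0$. Comparing the flow maps \eqref{eq:FlowExp1} generated by $w$ and $\bar v$ and integrating by parts in time to put in evidence the primitive $\int_0^t (w-\bar v)$, one controls $\Phi^w_{(0,t)}(x) - \Phi^{\bar v}_{(0,t)}(x)$ on $K$ by a Grönwall argument with the forcing $\omega(h)$, whence $\sup_t W_p(\mu(t),\tilde\mu(t)) \leq C_1\, \omega(h)$ for a constant $C_1$ depending on $r,R_r,T$ and $\Norm{m(\cdot)}_1,\Norm{l_K(\cdot)}_1$.

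Finally, to convert $\tilde \mu(\cdot)$ into a genuine solution of the original continuity inclusion, we apply Filippov's Theorem \ref{thm:FilippovEstimate} with $\nu := \tilde\mu$ and the Carathéodory field $w$. Since $w(t) \in V(t,\mu(t))$ (not $V(t,\tilde\mu(t))$), hypothesis \ref{hyp:DI}(iv) yields the pointwise estimate
\begin{equation*}
\eta_{\tilde\mu}(t) \; = \; \dist_{C^0(K,\R^d)} \Big( w_{\vert K}(t) \, ; \, V_K(t,\tilde\mu(t)) \Big) \; \leq \; L_K(t) \, W_p(\mu(t),\tilde\mu(t)) \; \leq \; C_1 L_K(t)\, \omega(h),
\end{equation*}
which is integrable on $[0,T]$. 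Filippov then produces $\mu_\delta(\cdot) \in \AC([0,T],\Pcal_c(\R^d))$ solving the continuity inclusion with $\mu_\delta(0) = \mu^0$ and $W_p(\mu_\delta(t),\tilde\mu(t)) \leq C_p \, C_1 \omega(h) \Norm{L_K(\cdot)}_1$ for all $t$. The triangle inequality gives $\sup_t W_p(\mu_\delta(t),\mu(t)) \leq C_2\,\omega(h)$, and choosing $h$ small enough that $C_2\,\omega(h) \leq \delta$ concludes the proof. The principal technical hurdle is the chattering step: the classical $\R^d$-valued zigzag lemma must be adapted to measurable selections valued in $C^0(K,\R^d)$, and the resulting $\mu$-dependent selection $w$ must be transported along a flow whose $W_p$-proximity to $\mu(\cdot)$ is controlled only through the primitive of $w - \bar v$, which is where the integration-by-parts/Grönwall estimate becomes delicate.
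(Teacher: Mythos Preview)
The paper does not prove this theorem. Theorem~\ref{thm:RelaxationWass} is one of three results (Theorems~\ref{thm:FilippovEstimate}, \ref{thm:RelaxationWass}, \ref{thm:ControlInc}) that the paper explicitly \emph{recalls} from the authors' earlier work \cite{ContInc}; see the sentence immediately preceding Theorem~\ref{thm:FilippovEstimate}: ``We end this section by recalling three of the main results of \cite{ContInc}\dots''. No proof is given here, so there is nothing in the present paper to compare your argument against.

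That said, your outline is the right shape for a relaxation theorem and matches the classical strategy: chattering approximation of the relaxed selection, transport along the chattered field, then Filippov to re-enter the non-convex inclusion. Two points deserve care if you flesh this out. First, the Carath\'eodory decomposition step is genuinely delicate in the Banach space $C^0(K,\R^d)$: the finite-dimensional Carath\'eodory theorem fails, so you cannot write $\bar v(t)$ exactly as a finite convex combination of selections from $V_K(t,\mu(t))$; you only get arbitrarily good $C^0(K,\R^d)$-approximations by such combinations, and the measurable dependence of the approximating selections and weights on $t$ needs a measurable-selection argument (e.g.\ Castaing representations of $V_K(t,\mu(t))$). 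Second, the flow-comparison step---controlling $\Phi^w_{(0,t)} - \Phi^{\bar v}_{(0,t)}$ via the primitive $\int_0^t (w-\bar v)$---requires an integration-by-parts identity for Carath\'eodory ODEs combined with the Lipschitz bound $l_K(\cdot)$; this is standard in $\R^d$ but should be stated carefully since $l_K(\cdot)$ is only $L^1$ in time. Both issues are handled in \cite{ContInc}, to which you should refer for the actual proof.
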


\begin{thm}[Correspondence between control systems and continuity inclusions]
\label{thm:ControlInc}
Let $(U,d_U)$ be a compact metric space and $v : [0,T] \times \Pcal_c(\R^d) \times U \times \R^d \rightarrow \R^d$ be a controlled non-local velocity-field such that $(t,\mu,x) \mapsto v(t,\mu,u,x) \in \R^d$ satisfies hypotheses \ref{hyp:CE} for every $u \in U$ with constants $m(\cdot),l_k(\cdot),L_K(\cdot) \in L^1([0,T],\R_+)$ independent of $u \in U$, and also $u \in U \mapsto v(t,\mu,u,x) \in \R^d$ is continuous for $\Lcal^1$-almost every $t \in [0,T]$ and any $(\mu,x) \in \Pcal_c(\R^d) \times \R^d$. Moreover, define the set-valued map $V : [0,T] \times \Pcal_c(\R^d) \rightrightarrows C^0(\R^d,\R^d)$ by
\begin{equation*}
V(t,\mu) := \Big\{ \vb \in C^0(\R^d,\R^d) ~\text{s.t.}~ \vb(\cdot) = v(t,\mu,u,\cdot) ~\text{for some $u \in U$} \Big\},
\end{equation*}
for all $(t,\mu) \in [0,T] \times \Pcal_c(\R^d)$.

Then, $V(\cdot,\cdot)$ satisfies hypotheses \ref{hyp:DI}, and a curve of measures $\mu(\cdot)$ is a solution of the continuity inclusion \eqref{eq:WassIncDef} \textnormal{if and only if} it solves the controlled non-local continuity equation
\begin{equation*}
\partial_t \mu(t) + \Div \big( v(t,\mu(t),u(t)) \mu(t) \big) = 0,
\end{equation*}
for some $\Lcal^1$-measurable selection $t \in [0,T] \mapsto u(t) \in U$.
\end{thm}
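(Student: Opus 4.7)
The plan is to split the argument into two independent parts: first verifying that the set-valued map $V(\cdot,\cdot)$ inherits hypotheses \ref{hyp:DI} from the Carathéodory-type regularity of $v$, and second establishing the bi-implication between trajectories of the non-local control system and those of the continuity inclusion by means of a measurable selection theorem.

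For the first part, fix $R > 0$ and set $K := B(0,R)$. The key observation is that the uniform Lipschitz bound in hypotheses \ref{hyp:CE}(ii) together with the pointwise continuity of $u \in U \mapsto v(t,\mu,u,x)$ implies, via an equi-continuity argument, that $u \in U \mapsto v(t,\mu,u,\cdot)_{\vert K} \in C^0(K,\R^d)$ is continuous. Since $U$ is compact, the image $V_K(t,\mu)$ is a compact (and hence closed and non-empty) subset of $C^0(K,\R^d)$. The growth condition (DI)(ii) and the Lipschitz estimates (DI)(iii)--(iv) are then direct consequences of the uniform-in-$u$ bounds postulated in \ref{hyp:CE}, obtained by picking, for each $v \in V_K(t,\nu)$, a parameter $u \in U$ such that $v = v(t,\nu,u,\cdot)_{\vert K}$ and comparing with $v(t,\mu,u,\cdot)_{\vert K} \in V_K(t,\mu)$. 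To obtain the $\Lcal^1$-measurability required in (DI)(i), I would choose a dense sequence $(u_n)_{n \geq 1} \subset U$ and observe that, by the just-established continuity in $u$,
\begin{equation*}
V_K(t,\mu) = \overline{\big\{ v(t,\mu,u_n,\cdot)_{\vert K} ~\text{s.t.}~ n \geq 1 \big\}}.
\end{equation*}
Each map $t \in [0,T] \mapsto v(t,\mu,u_n,\cdot)_{\vert K} \in C^0(K,\R^d)$ is strongly measurable, as it is pointwise measurable in $t$ and takes values in a separable Banach space, with uniformly equi-continuous images in $x$. The resulting Castaing representation of $V_K(\cdot,\mu)$ then yields its measurability.

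For the equivalence, the ``if'' direction is direct. Given a measurable $u : [0,T] \to U$ and the corresponding solution $\mu(\cdot)$ of the controlled non-local continuity equation, Theorem \ref{thm:ExistenceWass} provides $R > 0$ with $\supp(\mu(t)) \subset K := B(0,R)$ for all $t \in [0,T]$. Setting $\vb(t) := v(t,\mu(t),u(t),\cdot)$, the map $t \mapsto \vb(t)_{\vert K}$ is an $\Lcal^1$-measurable selection of $V_K(t,\mu(t))$, since the composition is Carathéodory in $(t,u)$ with values in $C^0(K,\R^d)$. The distributional equality in Definition \ref{def:WassInc} is then immediate.

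The main obstacle is the ``only if'' direction, which requires producing a measurable control $u(\cdot)$ out of a measurable selection $\vb(\cdot)$. Given a solution $\mu(\cdot) \in \AC([0,T],\Pcal_c(\R^d))$ of the continuity inclusion, let $K := B(0,R)$ with $R>0$ large enough that $\supp(\mu(t)) \subset K$ for all $t \in [0,T]$, and let $\vb(\cdot)$ be the associated measurable selection satisfying $\vb(t) \in V(t,\mu(t))$ for $\Lcal^1$-almost every $t$. I would apply a Filippov-type implicit function theorem (see, e.g., \cite[Theorem 8.2.10]{Aubin1990}) to the set-valued map
\begin{equation*}
G(t) := \Big\{ u \in U ~\text{s.t.}~ v(t,\mu(t),u,\cdot)_{\vert K} = \vb(t)_{\vert K} \Big\}.
\end{equation*}
These sets are non-empty by definition of $V(t,\mu(t))$, and closed in the compact Polish space $U$ by continuity of $u \mapsto v(t,\mu(t),u,\cdot)_{\vert K}$. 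The evaluation map $(t,u) \mapsto v(t,\mu(t),u,\cdot)_{\vert K} - \vb(t)_{\vert K}$ is Carathéodory from $[0,T] \times U$ into the separable Banach space $C^0(K,\R^d)$, which ensures the measurability of $G(\cdot)$. The Kuratowski--Ryll-Nardzewski selection theorem then delivers an $\Lcal^1$-measurable selection $t \in [0,T] \mapsto u(t) \in G(t)$, so that $v(t,\mu(t),u(t),\cdot)$ and $\vb(t)$ agree on $K$, hence on $\supp(\mu(t))$, which suffices for $\mu(\cdot)$ to solve the controlled non-local continuity equation in the sense of distributions.
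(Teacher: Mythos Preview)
The paper does not prove this theorem; it is stated in Section \ref{subsection:WassInc} as one of the main results recalled from the companion article \cite{ContInc}, without any argument reproduced here. Your proof sketch is correct and follows exactly the expected strategy: verifying \ref{hyp:DI} via the equi-Lipschitz structure in $x$ (which upgrades pointwise continuity in $u$ to $C^0(K,\R^d)$-continuity, hence compactness of $V_K(t,\mu)$) together with a Castaing representation for measurability, and then handling the non-trivial implication by a Filippov-type measurable implicit function theorem applied to the Carath\'eodory map $(t,u) \mapsto v(t,\mu(t),u,\cdot)_{\vert K} - \vb(t)_{\vert K}$. This is precisely the approach carried out in \cite{ContInc}, so there is nothing to contrast.
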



\section{Differential calculus and linearised dynamics in $\Pcal_c(\R^d)$}
\label{section:NonSmoothWass}

In this section, we investigate novel differential calculus tools for functionals and dynamical systems defined over $\Pcal_c(\R^d)$ seen as a subset of $(\Pcal_2(\R^d),W_2)$. In Section \ref{subsection:LocalisedSubdiff}, we introduce a localised notion of $\Pcal_2(\R^d)$-differentiability for compactly supported measures, and prove a new chain rule formula along arbitrary transport plans. In Section \ref{subsection:Linearisation}, we derive the general expression for the linearised Cauchy problem associated to a non-local continuity equation, and focus on the particular case in which the velocity perturbations are tangent to the set of admissible velocities of a continuity inclusion.


\subsection{Localised theory of Wasserstein calculus}
\label{subsection:LocalisedSubdiff}

In this section, we study the properties of a new notion of \textit{localised} subdifferential defined in the spirit of \cite{AGS,Gangbo2019} for extended real-valued functionals $\phi : \Pcal_2(\R^d) \rightarrow \R \cup \{ \pm \infty \}$ such that $\Pcal_c(\R^d) \subset D(\phi) := \{ \mu \in \Pcal_2(\R^d) ~\text{s.t.}~ \phi(\mu) \neq \pm \infty \}$. In the sequel, we will use the condensed notation $\phi : \Pcal_c(\R^d) \rightarrow \R$ to refer to any such functional.

\begin{Def}[Localised sub and superdifferential]
\label{def:Subdiff}
Let $\phi : \Pcal_c(\R^d) \rightarrow \R$ and $\mu \in \Pcal_c(\R^d)$. We say that a map $\xi \in L^2(\R^d,\R^d;\mu)$ belongs to the \textnormal{localised subdifferential} $\partial^-_{\loc} \phi(\mu)$ of $\phi(\cdot)$ at $\mu$ provided that
\begin{equation}
\label{eq:Subdiff}
\phi(\nu) - \phi(\mu) \geq \inf_{\gamma \in \Gamma_o(\mu,\nu)} \INTDom{\langle \xi(x) , y-x \rangle}{\R^{2d}}{\gamma(x,y)} + o_R(W_2(\mu,\nu)),
\end{equation}
for every $R > 0$ and any $\nu \in \Pcal(B_{\mu}(R))$, with $B_{\mu}(R)$ being defined as in \eqref{eq:Fattening}. Furthermore, we say that a map $\xi \in L^2(\R^d,\R^d;\mu)$ belongs to the \textnormal{strong localised subdifferential} $\partial^-_{S,\loc} \phi(\mu)$ if it satisfies 
\begin{equation}
\label{eq:StrongSubdiff}
\phi(\nu) - \phi(\mu) \geq \INTDom{\langle \xi(x) , y-x \rangle}{\R^{2d}}{\Bmu(x,y)} + o_R(W_{2,\Bmu}(\mu,\nu)),
\end{equation}
for every $R> 0$, any $\nu \in \Pcal(B_{\mu}(R))$ and each $\Bmu \in \Gamma(\mu,\nu)$, where we introduced the notation
\begin{equation}
\label{eq:WeightedWass}
W_{2,\Bmu}(\mu,\nu) = \left( \INTDom{|x-y|^2}{\R^{2d}}{\Bmu(x,y)} \right)^{1/2}.
\end{equation}
Analogously, we say that $\xi \in L^2(\R^d,\R^d;\mu)$ belongs to the \textnormal{localised superdifferential} $\partial^+_{\loc} \phi(\mu)$ of $\phi(\cdot)$ at $\mu$ if $(-\xi) \in \partial^-_{\loc} (-\phi)(\mu)$, with a similar definition for strong localised superdifferentials.
\end{Def}

\begin{rmk}[On the difference between \eqref{eq:Subdiff} and \eqref{eq:StrongSubdiff}]
In line with the theory of subdifferential calculus on manifolds, localised subdifferentials are defined via their action on optimal displacement directions. As illustrated first in \cite{McCann1997} (see also \cite[Chapter 7]{AGS}), such optimal directions are given by displacement interpolations of optimal plans, which leads to the geometric definition \eqref{eq:Subdiff}.

On the other hand, defining localised subdifferentials via their action on any transport direction as in \eqref{eq:StrongSubdiff} produces a class of subdgradients that is much smaller, and which does not enjoy several of the desirable features of classical subdifferentials (such as being weakly closed, see e.g. \cite[Remark 10.3.2]{AGS}). Yet, strong subdifferentials are extremely useful in practice as they allow to differentiate functionals along directions that are not necessarily optimal. This property is of crucial interest in control theory, as the corresponding classes of admissible variations usually belong to this category.
\end{rmk}

By a direct adaptation of the proofs of \cite[Section 3]{Gangbo2019} with finite radii $R > 0$, it can be shown that $\partial_{\loc}^- \phi(\mu) \cap \partial^+_{\loc} \phi(\mu)$ contains at most one element, which also belongs to $\Tan_{\mu} \Pcal_2(\R^d)$. This motivates the following definition of \textit{local differentiability} for functionals defined over $\Pcal_c(\R^d)$.

\begin{Def}[Locally differentiable functional]
\label{def:LocalDiff}
A functional $\phi : \Pcal_c(\R^d) \rightarrow \R$ is \textnormal{locally differentiable at $\mu \in \Pcal_c(\R^d)$} if there exists a map $\nabla \phi(\mu) \in \Tan_{\mu}\Pcal_2(\R^d)$ -- called the \textnormal{Wasserstein gradient} of $\phi(\cdot)$ at $\mu$ --, such that $\partial^-_{\loc} \phi(\mu) \cap \partial^+_{\loc} \phi(\mu) = \{ \nabla \phi(\mu) \}$. Similarly, we shall say that $\phi(\cdot)$ is locally differentiable over $\Pcal_c(\R^d)$ if it is locally differentiable at every $\mu \in \Pcal_c(\R^d)$.
\end{Def}
 
\begin{rmk}[Uniformity of the gradient]
Observe that in Definition \ref{def:LocalDiff} above, we impose on the gradient $\nabla \phi(\mu) \in \partial^-_{\loc} \phi(\mu) \cap \partial^+_{\loc} \phi(\mu)$ to be independent from $R > 0$. This is due to the fact that $\nabla \phi(\mu)$ is an element of $\Tan_{\mu} \Pcal_2(\R^d)$ -- thus defined $\mu$-almost everywhere over $\R^d$ --, so that it should not depend on the fattening parameter of the support of $\nu \in \Pcal(B_{\mu}(R))$. The latter is solely used to control the small-o of the $W_2$-distance in \eqref{eq:Subdiff}, as illustrated in Proposition \ref{prop:Example} below.
\end{rmk}

We provide examples of locally differentiable functionals in Section \ref{appendix:Examples}. In the next proposition, we prove that localised subdifferentials which belong to the analytical tangent space $\Tan_{\mu} \Pcal_2(\R^d)$ are in fact strong localised subdifferentials. The proof of this result is inspired from that of \cite[Chapter 1, Proposition 4.2]{Dafermos2006}, where a similar property is established for classical subdifferentials and measures $\mu \in \Pcal_2(\R^d)$ which are \textit{absolutely continuous} with respect to the standard Lebesgue measure $\Lcal^d$.

\begin{prop}[Tangent localised subdifferentials are strong]
\label{prop:StrongSubdiff}
Let $\phi : \Pcal_c(\R^d) \rightarrow \R$ and $\mu \in \Pcal_c(\R^d)$ be such that $\partial^-_{\loc} \phi(\mu) \neq \emptyset$. Then
\begin{equation*}
\partial_{\loc}^- \phi(\mu) \cap \Tan_{\mu} \Pcal_2(\R^d) \subset \partial_{S,\loc}^- \phi(\mu),
\end{equation*}
i.e. any localised subdifferential which belongs to the analytical tangent space $\Tan_{\mu} \Pcal_2(\R^d)$ is also a strong localised subdifferential.
\end{prop}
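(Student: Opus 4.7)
The plan is to exploit that $\xi \in \Tan_{\mu} \Pcal_2(\R^d)$ can be approximated in $L^2(\R^d,\R^d;\mu)$ by gradients $\nabla \zeta_n$ of smooth compactly supported test functions, and that for such smooth gradients the integral $\INTDom{\langle \nabla \zeta_n(x), y-x\rangle}{\R^{2d}}{\gamma(x,y)}$ is almost plan-independent -- equal to $\int \zeta_n \, \textnormal{d}\nu - \int \zeta_n \, \textnormal{d}\mu$ up to a second-order Taylor error in the transport cost. This will let me transfer the localised subdifferential inequality \eqref{eq:Subdiff}, which a priori concerns only optimal plans, to an arbitrary plan $\Bmu \in \Gamma(\mu,\nu)$.

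Concretely, I would fix $R>0$, $\nu \in \Pcal(B_\mu(R))$, $\Bmu \in \Gamma(\mu,\nu)$, and pick $\zeta_n \in C^{\infty}_c(\R^d)$ with $\nabla \zeta_n \rightarrow \xi$ in $L^2(\R^d,\R^d;\mu)$. A second-order Taylor expansion of $\zeta_n$ between $x$ and $y$, integrated against an arbitrary $\gamma \in \Gamma(\mu,\nu)$, yields
\begin{equation*}
\INTDom{\big(\zeta_n(y) - \zeta_n(x)\big)}{\R^{2d}}{\gamma(x,y)} = \INTDom{\langle \nabla \zeta_n(x), y-x\rangle}{\R^{2d}}{\gamma(x,y)} + r_n(\gamma),
\end{equation*}
with $|r_n(\gamma)| \leq \tfrac{1}{2} \Lip\big(\nabla \zeta_n \,; \R^d\big) W_{2,\gamma}^2(\mu,\nu)$. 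Since the left-hand side depends only on the marginals, subtracting this identity evaluated at $\Bmu$ from the one evaluated at any optimal plan $\gamma_o \in \Gamma_o(\mu,\nu)$ -- and using $W_2(\mu,\nu) \leq W_{2,\Bmu}(\mu,\nu)$ -- gives
\begin{equation*}
\INTDom{\langle \nabla \zeta_n, y-x\rangle}{\R^{2d}}{\gamma_o(x,y)} \geq \INTDom{\langle \nabla \zeta_n, y-x\rangle}{\R^{2d}}{\Bmu(x,y)} - \Lip\big(\nabla \zeta_n\,; \R^d\big) W_{2,\Bmu}^2(\mu,\nu).
\end{equation*}
A Cauchy--Schwarz control allows me to swap $\nabla \zeta_n$ for $\xi$ on both sides at the cost of the linear term $\NormL{\xi-\nabla\zeta_n}{2}{\mu}\, W_{2,\gamma}(\mu,\nu)$, after which taking the infimum over $\gamma_o \in \Gamma_o(\mu,\nu)$ and plugging into \eqref{eq:Subdiff} produces
\begin{equation*}
\phi(\nu) - \phi(\mu) \geq \INTDom{\langle \xi, y-x\rangle}{\R^{2d}}{\Bmu(x,y)} - \Lip\big(\nabla \zeta_n\,;\R^d\big) W_{2,\Bmu}^2 - 2\NormL{\xi - \nabla\zeta_n}{2}{\mu} W_{2,\Bmu} + o_R(W_2(\mu,\nu)).
\end{equation*}

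To close, I would produce the small-$o$ behaviour required by \eqref{eq:StrongSubdiff} by an $\varepsilon/n$ argument. For any $\varepsilon > 0$, first freeze $n$ large enough so that $\NormL{\xi - \nabla \zeta_n}{2}{\mu} \leq \varepsilon/4$, which absorbs the linear error into $\varepsilon W_{2,\Bmu}/2$; for this fixed $n$, the Taylor remainder $\Lip(\nabla \zeta_n\,; \R^d)W_{2,\Bmu}^2$ is genuinely $o(W_{2,\Bmu})$ as $W_{2,\Bmu}\to 0^+$, and $o_R(W_2(\mu,\nu))$ is similarly controlled since $W_2 \leq W_{2,\Bmu}$. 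Letting $\varepsilon \to 0^+$ then delivers the strong localised subdifferential inequality. The main subtlety I anticipate is precisely the asymmetric behaviour of the two error sources: the $L^2(\mu)$-approximation error is linear in $W_{2,\Bmu}$ but vanishes in $n$, whereas the Taylor remainder is quadratic in $W_{2,\Bmu}$ but carries the factor $\Lip(\nabla \zeta_n\,; \R^d)$, which may blow up as $n\to \infty$. The crucial ordering is therefore to freeze $n$ first to kill the linear term, and only then contract $W_{2,\Bmu}$ to handle the quadratic one, which is exactly the structural role played by the tangency assumption $\xi \in \Tan_{\mu} \Pcal_2(\R^d)$: without an $L^2(\mu)$-dense family of smooth gradients, this two-step argument cannot be carried out.
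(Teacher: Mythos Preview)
Your proof is correct and takes a genuinely different route from the paper. The paper argues by contradiction: assuming the strong inequality fails along some sequence $(\mu_n,\Bmu_n)$, it introduces rescaled plans $\hat{\gamma}_n,\hat{\Bmu}_n$, invokes Prokhorov to extract narrow limits, establishes uniform integrability to pass to the limit in the pairings, and finally derives a contradiction via the same Taylor identity you use. Your argument is direct: you exploit the Taylor expansion of $\zeta_n$ immediately to compare $\int \langle \nabla\zeta_n, y-x\rangle$ under $\gamma_o$ and under $\Bmu$, swap $\nabla\zeta_n$ for $\xi$ by Cauchy--Schwarz, and then run the two-scale $\varepsilon/n$ argument to produce the required $o_R(W_{2,\Bmu})$. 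Both proofs rest on the same two pillars---density of smooth gradients in $\Tan_\mu\Pcal_2(\R^d)$ and the marginal-only dependence of $\int(\zeta_n(y)-\zeta_n(x))\,\textnormal{d}\gamma$---but yours bypasses the measure-theoretic compactness machinery entirely, which makes it shorter and more transparent about why tangency is exactly what is needed. The paper's contradiction route has the minor advantage of not requiring one to track uniformity of the remainder in $\nu$ and $\Bmu$ explicitly (the sequence carries that), whereas in your argument one should note that the threshold $\delta$ produced for each $\varepsilon$ depends only on $n(\varepsilon)$ and on the fixed $o_R$ from \eqref{eq:Subdiff}, hence is uniform over all $\nu\in\Pcal(B_\mu(R))$ and $\Bmu\in\Gamma(\mu,\nu)$; this is implicit in what you wrote but worth stating.
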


\begin{proof}
By contradiction, suppose that there exists $\xi \in \partial_{\loc}^- \phi(\mu) \cap \Tan_{\mu} \Pcal_2(\R^d)$ which is not a strong localised subdifferential. Then for some $R> 0$ and $\delta >0$, we can find a sequence of measures $(\mu_n) \subset \Pcal(B_{\mu}(R))$ which converges towards $\mu$ in the $W_2$-metric along with a sequence of plans $(\Bmu_n) \subset \Gamma(\mu,\mu_n)$ such that for any $n \geq 1$ large enough, it holds
\begin{equation}
\label{eq:StrongSubdiffIneq}
\phi(\mu_n) - \phi(\mu) - \INTDom{\langle \xi(x) , y-x \rangle}{\R^{2d}}{\Bmu_n(x,y)} \leq - \delta \epsilon_n,
\end{equation}
where $\epsilon_n := W_{2,\Bmu_n}(\mu,\mu_n)$ is given by \eqref{eq:WeightedWass} and satisfies $\epsilon_n \rightarrow 0^+$ as $n \rightarrow +\infty$. Choose $\gamma_n \in \Gamma_o(\mu,\mu_n)$ for any $n \geq 1$, and observe that since $\xi \in \partial_{\loc}^- \phi(\mu)$, it holds for $n \geq 1$ large enough 
\begin{equation}
\label{eq:SubdiffIneq}
\phi(\mu_n) - \phi(\mu) - \INTDom{\langle \xi(x) , y-x \rangle}{\R^{2d}}{\gamma_n(x,y)} \geq - \tfrac{\delta}{2} \epsilon_n,
\end{equation}
where we used the fact that $W_2(\mu,\mu_n) \leq W_{2,\Bmu_n}(\mu,\mu_n)$. By merging \eqref{eq:StrongSubdiffIneq} and \eqref{eq:SubdiffIneq}, we obtain
\begin{equation}
\label{eq:Ineq1}
\INTDom{\big\langle \xi(x) , \tfrac{y-x}{\epsilon_n} \big\rangle}{\R^{2d}}{(\gamma_n - \Bmu_n)(x,y)} \leq - \tfrac{\delta}{2}.
\end{equation}
We now introduce the sequences of rescaled plans $(\hat{\gamma}_n)$ and $(\hat{\Bmu}_n)$, defined for all $n \geq 1$ by
\begin{equation*}
\hat{\gamma}_n := \big( \pi^1 , \tfrac{\pi^2-\pi^1}{\epsilon_n} \big)_{\#} \gamma_n \qquad \text{and} \qquad \hat{\Bmu}_n := \big( \pi^1 , \tfrac{\pi^2-\pi^1}{\epsilon_n} \big)_{\#} \Bmu_n,
\end{equation*}
so that \eqref{eq:Ineq1} can be rewritten as
\begin{equation}
\label{eq:Ineq1Bis}
\INTDom{\langle \xi(x) , r \rangle}{\R^{2d}}{(\hat{\gamma}_n - \hat{\Bmu}_n)(x,r)} \leq - \tfrac{\delta}{2}.
\end{equation}
We claim that both sequences $(\hat{\gamma}_n)$ and $(\hat{\Bmu}_n)$ have uniformly bounded second order momentum. Indeed
\begin{equation*}
\sup_{n \geq 1} \M_2^2(\hat{\gamma}_n) = \sup_{n \geq 1} \INTDom{\Big( |r|^2 + |x|^2 \Big)}{\R^{2d}}{\hat{\gamma}_n(x,r)} = \sup_{n \geq 1} \INTDom{\Big( \big| \tfrac{x-y}{\epsilon_n} \big|^2 + |x|^2 \Big)}{\R^{2d}}{\gamma_n(x,y)}  \leq 1 + \M_2^2(\mu),
\end{equation*}
and the same estimate also holds for $(\hat{\Bmu}_n)$. Whence, $(\hat{\gamma}_n)$ and $(\hat{\Bmu}_n)$ are tight (see e.g. \cite[Remark 5.1.5]{AGS}), and thus narrowly compact by Prokhorov's theorem (see e.g. \cite[Theorem 5.1.3]{AGS}). In the sequel, we denote by $\hat{\gamma}$ and $\hat{\Bmu}$ two of their cluster points that can be reached along the same subsequence. 

Our goal now is to pass to the limit as $n \rightarrow +\infty$ in \eqref{eq:Ineq1Bis}. Let $(\xi_k) \subset C^{\infty}_c(\R^d,\R^d)$ be a sequence such that $\xi_k \rightarrow \xi$ in $L^2(\R^d,\R^d;\mu)$ as $k \rightarrow +\infty$, and observe that
\begin{equation}
\label{eq:Estimate_kn1}
\begin{aligned}
\INTDom{\langle \xi_k(x) , r \rangle}{\R^{2d}}{\hat{\gamma}_n(x,r)} & \leq \INTDom{\langle \xi(x) , r \rangle}{\R^{2d}}{\hat{\gamma}_n(x,r)} + \bigg( \INTDom{|\xi(x) - \xi_k(x)|^2}{\R^d}{\mu(x)} \bigg)^{\hspace{-0.1cm} 1/2} \hspace{-0.05cm} \bigg( \INTDom{|r|^2}{\R^{2d}}{\hat{\gamma}_n(x,r)} \bigg)^{\hspace{-0.1cm} 1/2} \\
& \leq \INTDom{\langle \xi(x) , r \rangle}{\R^{2d}}{\hat{\gamma}_n(x,r)} + \NormL{\xi - \xi_k}{2}{\mu}, 
\end{aligned}
\end{equation}
where we used Cauchy-Schwarz's inequality and the definition of $(\hat{\gamma}_n)$. Similarly, one also has
\begin{equation}
\label{eq:Estimate_kn2}
\INTDom{\langle \xi_k(x) , r \rangle}{\R^{2d}}{\hat{\Bmu}_n(x,r)} \geq \INTDom{\langle \xi(x) , r \rangle}{\R^{2d}}{\hat{\Bmu}_n(x,r)} - \NormL{\xi - \xi_k}{2}{\mu}
\end{equation}
for all $n,k \geq 1$. Whence by plugging \eqref{eq:Estimate_kn1} and \eqref{eq:Estimate_kn2} into \eqref{eq:Ineq1Bis}, one deduces for all $n,k \geq 1$ large enough
\begin{equation}
\label{eq:Estimate_kn3}
\INTDom{\langle \xi_k(x) , r \rangle}{\R^{2d}}{(\hat{\gamma}_n - \hat{\Bmu}_n)(x,r)} \leq - \tfrac{\delta}{4},
\end{equation}
We now claim that given $k \geq 1$, the map $(x,r) \in \R^{2d} \mapsto |\langle \xi_k(x) ,r \rangle| \in \R_+$ is uniformly integrable with respect to both $(\hat{\gamma}_n)$ and $(\hat{\Bmu}_n)$. Indeed setting $C_k := \NormC{\xi_k(\cdot)}{0}{\R^d,\R^d}$, it holds 
\begin{equation}
\label{eq:UnifInteg1}
\begin{aligned}
\INTDom{ |\langle \xi_k(x) , r \rangle| }{\{(x,r) ~\text{s.t.}~ |\langle \xi_k(x) ,r \rangle| \geq i \}}{\hat{\gamma}_n(x,r)} & \leq C_k \INTDom{|r| }{\{(x,r) ~\text{s.t.}~ |r| \geq i/C_k \}}{\hat{\gamma}_n(x,r)}, 
\end{aligned}
\end{equation}
for every $i \geq 1$, while on the other hand it can be checked by definition of $(\hat{\gamma}_n)$ that 
\begin{equation}
\label{eq:UnifInteg2}
\frac{i}{C_k} \INTDom{|r| }{\{(x,r) ~\text{s.t.}~ |r| \geq i/C_k \}}{\hat{\gamma}_n(x,r)} \leq \INTDom{|r|^2}{\R^{2d}}{\hat{\gamma}_n(x,r)}  \leq 1, 
\end{equation}
for any $n,k,i \geq 1$. Merging \eqref{eq:UnifInteg1} and \eqref{eq:UnifInteg2}, we therefore obtain
\begin{equation*}
\INTDom{ |\langle \xi_k(x) , r \rangle| }{\{(x,r) ~\text{s.t.}~ |\langle \xi_k(x) ,r \rangle| \geq i \}}{\hat{\gamma}_n(x,r)} \leq \tfrac{C_k^2}{i} ~\underset{i \rightarrow +\infty}{\longrightarrow}~ 0,
\end{equation*}
so that the $(x,r) \in \R^{2d} \mapsto |\langle \xi_k(x),r\rangle| \in \R_+$ is uniformly integrable with respect to $(\hat{\gamma}_n)$ for all $k \geq 1$. By repeating the same arguments for $(\hat{\Bmu}_n)$ and recalling that the latter map is continuous, we recover
\begin{equation}
\label{eq:UnifInteg3}
\INTDom{\langle \xi_k(x) , r \rangle}{\R^{2d}}{(\hat{\gamma}_n - \hat{\Bmu}_n)(x,r)} ~\underset{n \rightarrow +\infty}{\longrightarrow}~ \INTDom{\langle \xi_k(x) , r \rangle}{\R^{2d}}{(\hat{\gamma} - \hat{\Bmu})(x,r)}, 
\end{equation}
for all $k \geq 1$ by applying the convergence results of \cite[Lemma 5.1.7]{AGS}. Upon combining \eqref{eq:Estimate_kn3} and \eqref{eq:UnifInteg3} while letting $k \rightarrow +\infty$, we can thus conclude
\begin{equation}
\label{eq:Ineq2}
\INTDom{\langle \xi(x) , r \rangle}{\R^{2d}}{(\hat{\gamma} - \hat{\Bmu})(x,r)} \leq - \tfrac{\delta}{4}, 
\end{equation}
where we used the fact that $\pi^1_{\#} \hat{\gamma} = \pi^1_{\#} \hat{\Bmu} = \mu$ as a consequence of classical stability results under narrow convergence for transport plans (see e.g. \cite[Chapter 5]{AGS}). 

Let us now choose an arbitrary test function $\zeta \in C^{\infty}_c(\R^d)$. First, notice that there exists a constant $C_{\zeta} > 0$ such that the global estimates
\begin{equation}
\label{eq:EstimatesTestFunction}
\left\{
\begin{aligned}
& \zeta(y) - \zeta(x) \leq \langle \nabla \zeta(x) , y-x \rangle  + C_{\zeta} |x-y|^2, \\
& \zeta(x) - \zeta(y) \leq \langle \nabla \zeta(x) , x-y \rangle  + C_{\zeta} |x-y|^2,
\end{aligned}
\right.
\end{equation}
hold for any $x,y \in \R^d$. By using the first estimates of \eqref{eq:EstimatesTestFunction}, we further obtain for any $n \geq 1$
\begin{equation}
\label{eq:TanEst1}
\INTDom{\zeta(y)}{\R^{2d}}{\gamma_n(x,y)} \leq  \INTDom{\Big( \zeta(x) + \langle \nabla \zeta(x) , y-x \rangle \Big)}{\R^{2d}}{\gamma_n(x,y)} + C_{\zeta} W_2^2(\mu,\mu_n),
\end{equation}
where we used the fact that $\gamma_n \in \Gamma_o(\mu,\mu_n)$, while the second estimate of \eqref{eq:EstimatesTestFunction} further yields
\begin{equation}
\label{eq:TanEst2}
- \INTDom{\zeta(y)}{\R^{2d}}{\Bmu_n(x,y)} \leq -\INTDom{\Big( \zeta(x) + \langle \nabla \zeta(x) , y-x \rangle \Big)}{\R^{2d}}{\Bmu_n(x,y)} + C_{\zeta} W_{2,\Bmu_n}^2(\mu,\mu_n),
\end{equation}
by the definition \eqref{eq:WeightedWass} of $W_{2,\Bmu_n}(\mu,\mu_n)$. In addition, observe that since $\gamma_n,\Bmu_n \in \Gamma(\mu,\mu_n)$, one also has
\begin{equation}
\label{eq:Zeta_ZeroEq}
\INTDom{\zeta(x)}{\R^{2d}}{(\gamma_n - \Bmu_n)(x,y)} = 0 \qquad \text{and} \qquad \INTDom{\zeta(y)}{\R^{2d}}{(\gamma_n - \Bmu_n)(x,y)} = 0,
\end{equation}
for all $n \geq 1$. Therefore, combining the estimates of \eqref{eq:TanEst1} and \eqref{eq:TanEst2} while using \eqref{eq:Zeta_ZeroEq} and recalling that $W_2(\mu,\mu_n) \leq W_{2,\Bmu_n}(\mu,\mu_n) = \epsilon_n$, we obtain
\begin{equation*}
0 \leq \INTDom{\langle \nabla \zeta(x) , y-x \rangle}{\R^{2d}}{(\gamma_n - \Bmu_n)(x,y)} + 2C_{\zeta} \epsilon_n^2.
\end{equation*}
By dividing both sides of the previous inequality by $\epsilon_n >0$, considering again the sequences of rescaled plans $(\hat{\gamma}_n)$ and $(\hat{\Bmu}_n)$ and letting $n \rightarrow +\infty$ along an adequate subsequence, we finally recover
\begin{equation}
\label{eq:Ineq3}
0 \leq \INTDom{\langle \nabla \zeta(x) , r \rangle}{\R^{2d}}{(\hat{\gamma} - \hat{\Bmu})(x,r)},
\end{equation}
for any $\zeta \in C^{\infty}_c(\R^d)$. Since $\xi \in \Tan_{\mu} \Pcal_2(\R^d)$, there exists a sequence $(\zeta_n) \subset C^{\infty}_c(\R^d)$ such that $\nabla \zeta_n \rightarrow \xi$ in $L^2(\R^d,\R^d;\mu)$ as $n \rightarrow +\infty$, which leads to a contradiction between \eqref{eq:Ineq2} and \eqref{eq:Ineq3}.
\end{proof}

By combining Definition \ref{def:LocalDiff} and Proposition \ref{prop:StrongSubdiff}, we obtain a general chain rule along arbitrary transport plans for locally differentiable functionals.

\begin{prop}[Chain rule along arbitrary transport plans]
\label{prop:Chainrule}
Let $\phi : \Pcal_c(\R^d) \rightarrow \R$, $\mu \in \Pcal_c(\R^d)$ and suppose that $\phi(\cdot)$ is locally differentiable at $\mu$. Then for every $R > 0$ and $\nu \in \Pcal(B_{\mu}(R))$, it holds
\begin{equation}
\label{eq:Chainrule}
\phi(\nu) = \phi(\mu) + \INTDom{\langle \nabla\phi(\mu) (x) , y-x \rangle}{\R^{2d}}{\Bmu(x,y)} + o_R(W_{2,\Bmu}(\mu,\nu)),
\end{equation}
for any $\Bmu \in \Gamma(\mu,\nu)$. Conversely, if there exists a map $\nabla \phi(\mu) \in \Tan_{\mu} \Pcal_2(\R^d)$ such that for every $R >0$ and $\nu \in \Pcal(B_{\mu}(R))$ the first-order expansion \eqref{eq:Chainrule} holds along any optimal plan $\Bmu \in \Gamma_o(\mu,\nu)$, then $\phi(\cdot)$ is locally differentiable at $\mu$ in the sense of Definition \ref{def:LocalDiff}.
\end{prop}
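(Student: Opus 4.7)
The plan is to prove the two implications separately, with the forward one relying crucially on Proposition \ref{prop:StrongSubdiff} and the converse being essentially a tautological unpacking of the definitions once one observes that on optimal plans the weighted Wasserstein distance $W_{2,\Bmu}$ coincides with $W_2$.

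For the forward implication, suppose $\phi$ is locally differentiable at $\mu$, so that $\nabla\phi(\mu) \in \partial^-_{\loc}\phi(\mu) \cap \partial^+_{\loc}\phi(\mu)$ and $\nabla\phi(\mu) \in \Tan_{\mu}\Pcal_2(\R^d)$. Since $\nabla\phi(\mu)$ is a localised subdifferential lying in the analytical tangent space, Proposition \ref{prop:StrongSubdiff} upgrades it to a \emph{strong} localised subdifferential. Applying the same proposition to the functional $-\phi(\cdot)$ (whose localised subdifferential at $\mu$ contains $-\nabla\phi(\mu)$, still an element of $\Tan_{\mu}\Pcal_2(\R^d)$) shows that $\nabla\phi(\mu) \in \partial^+_{S,\loc}\phi(\mu)$ as well. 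Combining the two strong inequalities from Definition \ref{def:Subdiff}, one obtains for every $R > 0$, every $\nu \in \Pcal(B_{\mu}(R))$ and every $\Bmu \in \Gamma(\mu,\nu)$
\begin{equation*}
\INTDom{\langle \nabla\phi(\mu)(x), y-x\rangle}{\R^{2d}}{\Bmu(x,y)} + o_R(W_{2,\Bmu}(\mu,\nu)) \,\leq\, \phi(\nu) - \phi(\mu) \,\leq\, \INTDom{\langle \nabla\phi(\mu)(x), y-x\rangle}{\R^{2d}}{\Bmu(x,y)} + o_R(W_{2,\Bmu}(\mu,\nu)),
\end{equation*}
which yields the desired expansion \eqref{eq:Chainrule}.

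For the converse implication, assume a map $\nabla\phi(\mu) \in \Tan_{\mu}\Pcal_2(\R^d)$ satisfies \eqref{eq:Chainrule} along every optimal plan $\Bmu \in \Gamma_o(\mu,\nu)$ for all $R > 0$ and $\nu \in \Pcal(B_{\mu}(R))$. The key observation is that when $\gamma \in \Gamma_o(\mu,\nu)$, the weighted distance reduces to the true Wasserstein distance, namely $W_{2,\gamma}(\mu,\nu) = W_2(\mu,\nu)$ by definition of optimality. Hence \eqref{eq:Chainrule} reads
\begin{equation*}
\phi(\nu) - \phi(\mu) = \INTDom{\langle \nabla\phi(\mu)(x), y-x\rangle}{\R^{2d}}{\gamma(x,y)} + o_R(W_2(\mu,\nu))
\end{equation*}
for every $\gamma \in \Gamma_o(\mu,\nu)$. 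Taking the infimum over $\gamma \in \Gamma_o(\mu,\nu)$ on the right-hand side yields the inequality \eqref{eq:Subdiff} of Definition \ref{def:Subdiff}, so that $\nabla\phi(\mu) \in \partial^-_{\loc}\phi(\mu)$; reversing the sign of the error term analogously gives $\nabla\phi(\mu) \in \partial^+_{\loc}\phi(\mu)$.

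Finally, the uniqueness statement recalled right after Definition \ref{def:Subdiff} guarantees that $\partial^-_{\loc}\phi(\mu) \cap \partial^+_{\loc}\phi(\mu)$ contains at most one element. Since $\nabla\phi(\mu)$ belongs to this intersection and lies in $\Tan_{\mu}\Pcal_2(\R^d)$, we conclude $\partial^-_{\loc}\phi(\mu) \cap \partial^+_{\loc}\phi(\mu) = \{ \nabla\phi(\mu) \}$, so $\phi$ is locally differentiable at $\mu$ in the sense of Definition \ref{def:LocalDiff}. The only substantive ingredient is Proposition \ref{prop:StrongSubdiff}, used in the forward direction; the converse is purely formal once one notices the identity $W_{2,\gamma} = W_2$ on optimal plans.
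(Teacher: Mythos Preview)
Your proof is correct and follows essentially the same approach as the paper's own proof: the forward direction uses Proposition~\ref{prop:StrongSubdiff} (applied to both $\phi$ and $-\phi$) to upgrade the gradient to a strong sub- and superdifferential, and the converse unpacks the definitions via the identity $W_{2,\gamma}=W_2$ on optimal plans. Your explicit invocation of the uniqueness statement to conclude that the intersection is a singleton is a small clarification the paper leaves implicit, but otherwise the two arguments coincide.
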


\begin{proof}
By Definition \ref{def:LocalDiff}, we have that $\nabla \phi(\mu) \in \partial^-_{\loc} \phi(\mu) \cap \partial^+_{\loc} \phi(\mu) \cap \Tan_{\mu} \Pcal_2(\R^d)$. Whence by Proposition \ref{prop:StrongSubdiff}, it further holds that $\nabla \phi(\mu) \in \partial^-_{S,\loc} \phi(\mu) \cap \partial^+_{S,\loc} \phi(\mu)$, so that \eqref{eq:Chainrule} is satisfied for every $\nu \in \Pcal(B_{\mu}(R))$ and any $\Bmu \in \Gamma(\mu,\nu)$. Conversely, let $\nu \in \Pcal(B_{\mu}(R))$ for some $R > 0$, and observe that for every optimal transport plan $\Bmu \in \Gamma_o(\mu,\nu)$, it holds
\begin{equation*}
\INTDom{\langle \nabla \phi(\mu)(x), y-x \rangle}{\R^{2d}}{\Bmu(x,y)} \geq \inf_{\gamma \in \Gamma_o(\mu,\nu)} \INTDom{\langle \nabla \phi(\mu), y-x \rangle}{\R^{2d}}{\gamma(x,y)},
\end{equation*}
so that \eqref{eq:Chainrule} directly yields that $\nabla \phi(\mu) \in \partial_{\loc}^-\phi(\mu)$ since $W_{2,\Bmu}(\mu,\nu) = W_2(\mu,\nu)$ when $\Bmu \in \Gamma_o(\mu,\nu)$ and both $R > 0$ and $\nu \in \Pcal(B_{\mu}(R))$ are arbitrary. Similarly, one can show that $\nabla \phi(\mu) \in \partial^+_{\loc} \phi(\mu)$. Hence, $\nabla \phi(\mu) \in \partial^-_{\loc} \phi(\mu) \cap \partial^+_{\loc} \phi(\mu) \cap \Tan_{\mu} \Pcal_2(\R^d)$, and the functional $\phi(\cdot)$ is locally differentiable at $\mu$ in the sense of Definition \ref{def:LocalDiff}.
\end{proof}

\begin{cor}[Chain rule along perturbations in $L^{\infty}(\R^d,\R^d;\mu)$]
\label{cor:ChainruleBis}
Let $\phi : \Pcal_c(\R^d) \rightarrow \R$, $\mu \in \Pcal_c(\R^d)$ and suppose that $\phi(\cdot)$ is locally differentiable at $\mu$. Then for every $\F \in L^{\infty}(\R^d,\R^d;\mu)$, it holds
\begin{equation*}
\phi \big( (\Id + \epsilon \F)_{\#}\mu \big) = \phi(\mu) + \epsilon \INTDom{\langle \nabla \phi(\mu)(x) , \F(x)\rangle}{\R^d}{\mu(x)} + o_{R_{\F}}(\epsilon),
\end{equation*}
for all $\epsilon \in [-1,1]$, where $R_{\F} := \NormL{\F}{\infty}{\mu}$.
\end{cor}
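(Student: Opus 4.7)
The plan is to reduce the claim directly to Proposition \ref{prop:Chainrule} (the chain rule along \emph{arbitrary} transport plans) by exhibiting a convenient non-optimal plan between $\mu$ and its pushforward. Set $\nu_{\epsilon} := (\Id + \epsilon \F)_{\#} \mu$ and consider the natural ``diagonal'' plan $\Bmu_{\epsilon} := (\Id, \Id + \epsilon \F)_{\#} \mu \in \Gamma(\mu,\nu_{\epsilon})$. The whole point is that Proposition \ref{prop:Chainrule}, combined with Proposition \ref{prop:StrongSubdiff}, permits the first-order expansion \eqref{eq:Chainrule} to be evaluated along this \emph{non-optimal} plan, which is exactly what reading $\nabla \phi(\mu)$ against $\F$ requires.

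First, I would verify the support localisation. Since $|\F(x)| \leq R_{\F}$ for $\mu$-almost every $x \in \supp(\mu)$ and $|\epsilon| \leq 1$, the pushforward $\nu_{\epsilon}$ is supported in the $R_{\F}$-fattening $B_{\mu}(R_{\F})$ of $\supp(\mu)$ defined in \eqref{eq:Fattening}. Hence $\nu_{\epsilon} \in \Pcal(B_{\mu}(R_{\F}))$ uniformly in $\epsilon \in [-1,1]$, and Proposition \ref{prop:Chainrule} may be invoked with the radius parameter $R := R_{\F}$.

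Next, I would compute the two ingredients appearing in \eqref{eq:Chainrule}. By the change-of-variables formula applied to the pushforward defining $\Bmu_\epsilon$,
\begin{equation*}
\INTDom{\langle \nabla \phi(\mu)(x) , y-x \rangle}{\R^{2d}}{\Bmu_{\epsilon}(x,y)} = \epsilon \INTDom{\langle \nabla \phi(\mu)(x) , \F(x)\rangle}{\R^d}{\mu(x)},
\end{equation*}
while the weighted Wasserstein quantity introduced in \eqref{eq:WeightedWass} satisfies
\begin{equation*}
W_{2,\Bmu_{\epsilon}}^2(\mu,\nu_{\epsilon}) = \INTDom{|\epsilon \F(x)|^2}{\R^d}{\mu(x)} = \epsilon^2 \NormL{\F}{2}{\mu}^2 \leq \epsilon^2 R_{\F}^2,
\end{equation*}
so that $W_{2,\Bmu_{\epsilon}}(\mu,\nu_{\epsilon}) \leq |\epsilon| R_{\F}$.

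Substituting these two identities into the expansion \eqref{eq:Chainrule} yields the claimed formula, with the remainder $o_{R_{\F}}(W_{2,\Bmu_{\epsilon}}(\mu,\nu_{\epsilon}))$ absorbed into $o_{R_{\F}}(\epsilon)$ thanks to the linear bound on $W_{2,\Bmu_{\epsilon}}$ in $|\epsilon|$ (the constant $R_{\F}$ being fixed once $\F$ is chosen). There is no genuine obstacle here: the substance of the corollary is really packed into Proposition \ref{prop:StrongSubdiff}, which upgrades the ``optimal-plan'' subdifferential inequality to a strong one, and Proposition \ref{prop:Chainrule}, which turns it into a two-sided expansion; the present statement is simply the special case corresponding to the transport direction $\F$.
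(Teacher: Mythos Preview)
Your proof is correct and follows essentially the same approach as the paper: you introduce the plan $\Bmu_{\epsilon} := (\Id,\Id+\epsilon \F)_{\#}\mu$, apply Proposition \ref{prop:Chainrule} with $R = R_{\F}$, and reduce the remainder to $o_{R_{\F}}(\epsilon)$ via the identity $W_{2,\Bmu_{\epsilon}}(\mu,\nu_{\epsilon}) = |\epsilon|\,\NormL{\F}{2}{\mu}$. The only cosmetic difference is that you bound $W_{2,\Bmu_{\epsilon}}$ by $|\epsilon| R_{\F}$ rather than using the exact equality, which is immaterial.
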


\begin{proof}
Observe first that for every $\F \in L^{\infty}(\R^d,\R^d;\mu)$, the measures $(\Id + \epsilon \F)_{\#} \mu$ have compact support in $B_{R_{\F}}(\mu)$ for all $\epsilon \in [-1,1]$. By applying Proposition \ref{prop:Chainrule} with the transport plan $\Bmu_{\epsilon} := (\Id,\Id+\epsilon \F)_{\#} \mu$, we therefore obtain
\begin{equation*}
\begin{aligned}
\phi \big( (\Id + \epsilon \F)_{\#}\mu \big) & = \phi(\mu) + \INTDom{\langle \nabla \phi(\mu)(x), y-x \rangle}{\R^{2d}}{\Bmu_{\epsilon}(x,y)} + o_{R_{\F}} \big( W_{2,\Bmu_{\epsilon}}(\mu,(\Id+\epsilon \F)_{\#}\mu) \big) \\
& = \phi(\mu) + \epsilon \INTDom{\langle \nabla \phi(\mu)(x), \F(x) \rangle}{\R^d}{\mu(x)} + o_{R_{\F}}(\epsilon),
\end{aligned}
\end{equation*}
where we used the fact that $W_{2,\Bmu_{\epsilon}}(\mu,(\Id+\epsilon \F)_{\#}\mu) = \epsilon \NormL{\F}{2}{\mu}$ as a direct consequence of \eqref{eq:WeightedWass}.
\end{proof}

We end this section by adapting the notion of local differentiability to vector-valued maps.

\begin{Def}[Local differentiability of vector-valued maps]
\label{def:VectorLocalDiff}
For $m \geq 1$, a map $\phi : \Pcal_c(\R^d) \rightarrow \R^m$ is said to be \textnormal{locally differentiable} at $\mu \in \Pcal_c(\R^d)$ if its components $\{\phi_i(\cdot)\}_{1 \leq i \leq m}$ are locally differentiable.
\end{Def}

\begin{rmk}[Chainrule for vector-valued maps]
\label{rmk:VectVal}
When $m = d$ and $\phi : \Pcal_c(\R^d) \rightarrow \R^d$ is locally differentiable at $\mu \in \Pcal_c(\R^d)$, we can apply Proposition \ref{prop:Chainrule} componentwisely to obtain
\begin{equation}
\label{eq:TaylorMeasure_Vectorvalued}
\phi(\nu) = \phi(\mu) + \INTDom{\D_{\mu} \phi(\mu)(x)(y-x)}{\R^{2d}}{\Bmu(x,y)} + o_{R}(W_{2,\Bmu}(\mu,\nu)),
\end{equation}
for every $\nu \in \Pcal(B_{\mu}(R))$ with $R > 0$, where $\D_{\mu} \phi(\mu)(\cdot) := (\nabla \phi_i(\mu)(\cdot))_{1 \leq i \leq d}$ is the matrix-valued map which rows are the Wasserstein gradients of the components $\{ \phi_i(\cdot) \}_{1 \leq i \leq d}$. Similarly, by applying componentwisely the statements of Corollary \ref{cor:ChainruleBis}, it also holds
\begin{equation}
\label{eq:TaylorMeasure_VectorvaluedBis}
\phi((\Id + \epsilon \F)_{\#} \mu) = \phi(\mu) + \epsilon \INTDom{\D_{\mu} \phi(\mu)(x)\F(x)}{\R^d}{\mu(x)} + o_{R_{\F}}(\epsilon),
\end{equation}
for all $\epsilon \in [-1,1]$ and any $\F \in L^{\infty}(\R^d,\R^d;\mu)$. 
\end{rmk}


\subsection{Linearisations of non-local continuity equations}
\label{subsection:Linearisation}

In this section, we derive the expression of the canonical linearised Cauchy problem associated to a non-local continuity equation with smooth driving velocity-field. In what follows we fix $p \in [1,+\infty)$ and for any compact set $K \subset \R^d$, we shall consider $\Pcal(K)$ as a subset of the metric space $(\Pcal_p(\R^d),W_p)$. We henceforth focus our attention on the Cauchy problem
\begin{equation}
\label{eq:ContinuityEquation_Nonlocal}
\left\{
\begin{aligned}
& \partial_t \mu(t) + \Div \big( v(t,\mu(t)) \mu(t) \big) = 0, \\
& \mu(0) = \mu^0 \in \Pcal_c(\R^d),
\end{aligned}
\right.
\end{equation}
driven by a non-local velocity-field $v : [0,T] \times \Pcal_c(\R^d) \times \R^d \mapsto \R^d$ satisfying the following assumptions.

\begin{taggedhyp}{\textbn{(H)}}
\label{hyp:H}
For every $R > 0$, assume that the following holds with $K := B(0,R)$.
\begin{enumerate}
\item[$(i)$] The non-local velocity field $v : [0,T] \times \Pcal_c(\R^d) \times \R^d \rightarrow \R^d$ satisfies hypotheses \ref{hyp:CE}.
\item[$(ii)$] For $\Lcal^1$-almost every $t \in [0,T]$ and any $\mu \in \Pcal_c(\R^d)$, the map $x \in \R^d \mapsto v(t,\mu,x) \in \R^d$ is Fréchet differentiable, and the application $(\mu,x) \in \Pcal(K) \times K \mapsto \D_x v(t,\mu,x) \in \R^{d \times d}$ is continuous.
\item[$(iii)$] For $\Lcal^1$-almost every $t \in [0,T]$ and any $x \in \R^d$, the map $\mu \in \Pcal_c(\R^d) \mapsto v(t,\mu,x) \in \R^d$ is locally differentiable in the sense of Definition \ref{def:VectorLocalDiff}, and the application $(\mu,x,y) \in \Pcal(K) \times K \times K \mapsto \D_{\mu} v(t,\mu,x)(y) \in \R^{d  \times d}$ is continuous.
\end{enumerate}
\end{taggedhyp}

In the following proposition, we derive a general first-order linearisation formula for solutions of \eqref{eq:ContinuityEquation_Nonlocal}. Similar techniques were applied in \cite{PMPWassConst,PMPWass} in the particular case of perturbations induced by \textit{needle variations} (see e.g. \cite{Pontryagin}).

\begin{prop}[Linearisation of non-local continuity equations]
\label{prop:Linearisation_Cauchy}
Let $\mu^0 \in \Pcal(B(0,r))$ for some $r > 0$, $v : [0,T] \times \Pcal_c(\R^d) \times \R^d \rightarrow \R^d$ be a non-local velocity-field satisfying hypotheses \ref{hyp:H}, and $\mu(\cdot) \in \AC([0,T],\Pcal(K))$ be the corresponding solution of \eqref{eq:ContinuityEquation_Nonlocal} where $K := B(0,R_r)$ is given by \eqref{eq:SuppAC_Est}. Let $\F^0 \in C^0(\R^d,\R^d)$, $w : [0,T] \times \R^d \rightarrow \R^d$ be a Carathéodory vector-field satisfying \ref{hyp:CE} and consider the perturbed Cauchy problems
\begin{equation}
\label{eq:Perturbed_NonLocal}
\left\{
\begin{aligned}
& \partial_t \mu_{\epsilon}(t) + \Div \Big( \big( v(t,\mu_{\epsilon}(t)) + \epsilon w(t) \big)\mu_{\epsilon}(t) \Big) = 0, \\
& \mu_{\epsilon}(0) = (\Id + \epsilon \F^0)_{\#} \mu^0.
\end{aligned}
\right.
\end{equation}
for $\epsilon \in [-1,1]$. 

Then, there exists a family of maps $(\G_{\epsilon}(\cdot,\cdot)) \subset C^0([0,T] \times K,\R^d)$ such that for every $\epsilon \in [-1,1]$, the solution $\mu_{\epsilon}(\cdot) \in \AC([0,T],\Pcal_c(\R^d))$ of \eqref{eq:Perturbed_NonLocal} can be expressed explicitly as
\begin{equation}
\label{eq:PerturbedMeasure_Expr}
\mu_{\epsilon}(t) = \G_{\epsilon}(t,\cdot)_{\#} \mu(t), 
\end{equation}
for all times $t \in [0,T]$. Moreover, the application $\epsilon \in [-1,1] \mapsto \G_{\epsilon}(t,\cdot) \in C^0(K,\R^d)$ admits a Taylor expansion at $\epsilon = 0$, of the form
\begin{equation}
\label{eq:TaylorMeasure}
\G_{\epsilon}(t,\cdot) = \Id + \epsilon \F \Big( t, \Phi_{(t,0)}[\mu(t)](\cdot) \Big) +  o_t(\epsilon),
\end{equation}
where $\sup_{t \in [0,T]} \NormC{o_t(\epsilon)}{0}{K,\R^d} = o(\epsilon)$ and $\F \in C^0([0,T] \times K,\R^d)$ is the unique solution of the \textnormal{linearised non-local Cauchy problem}
\begin{equation}
\label{eq:Linearised_CauchyNonLocal}
\left\{
\begin{aligned}
& \partial_t \F(t,x) = \D_x v \Big( t,\mu(t), \Phi_{(0,t)}[\mu^0](x) \Big) \F(t,x) + w \Big( t , \Phi_{(0,t)}[\mu^0](x) \Big) \\
& \hspace{2cm} + \INTDom{\D_{\mu} v \Big( t,\mu(t), \Phi_{(0,t)}[\mu^0](x) \Big) \big( \Phi_{(0,t)}[\mu^0](y) \big) \F(t,y)}{\R^d}{\mu^0(y)}, \\
& \F(0,x) = \F^0(x).
\end{aligned}
\right.
\end{equation}
\end{prop}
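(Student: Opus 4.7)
The plan is to exploit the non-local flow representation of Corollary \ref{cor:NonLocalPDE}, applied both to the unperturbed Cauchy problem and to its $\epsilon w$-perturbation. Under hypotheses \ref{hyp:H}(i) together with the sub-linearity and local Lipschitz regularity of $w$, the velocity-field $v + \epsilon w$ falls within the scope of Corollary \ref{cor:NonLocalPDE} for every $\epsilon \in [-1,1]$, so that a well-defined non-local flow $\Phi^\epsilon_{(s,t)}[\cdot]$ is associated with \eqref{eq:Perturbed_NonLocal}. Since $\mu(t) = \Phi_{(0,t)}[\mu^0](\cdot)_{\#}\mu^0$ and $\mu_\epsilon(t) = \Phi^\epsilon_{(0,t)}[\mu_\epsilon(0)](\cdot)_{\#}(\Id + \epsilon\F^0)_{\#}\mu^0$, I would simply set
\[
\G_\epsilon(t,x) \, := \, \Phi^\epsilon_{(0,t)}\bigl[\mu_\epsilon(0)\bigr] \Bigl( \bigl( \Id + \epsilon \F^0 \bigr) \bigl( \Phi_{(t,0)}[\mu(t)](x) \bigr) \Bigr),
\]
which yields $\G_\epsilon(t,\cdot)_{\#}\mu(t) = \mu_\epsilon(t)$ by direct composition of the pushforwards. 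Joint continuity $\G_\epsilon \in C^0([0,T] \times K, \R^d)$ follows from Filippov's estimate (Theorem \ref{thm:FilippovEstimate}) together with the local Lipschitz regularity of the non-local flows inherited from \ref{hyp:H}(i).

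Next, I would identify the candidate first-order term. Setting $y := \Phi_{(t,0)}[\mu(t)](x) \in \supp(\mu^0)$, the relevant quantity to Taylor-expand is
\[
\F(t,y) \, := \, \lim_{\epsilon \to 0} \tfrac{1}{\epsilon} \Bigl( \Phi^\epsilon_{(0,t)}[\mu_\epsilon(0)]\bigl((\Id+\epsilon\F^0)(y)\bigr) - \Phi_{(0,t)}[\mu^0](y) \Bigr).
\]
Subtracting the two fixed-point equations of type \eqref{eq:FlowExp1} satisfied by the perturbed and unperturbed flows, dividing by $\epsilon$ and passing formally to the limit, one obtains three contributions in the integrand at $\epsilon = 0$: the term $\D_x v(s,\mu(s), \Phi_{(0,s)}[\mu^0](y)) \F(s,y)$ from the Fréchet differentiability of $v$ in $x$ (hypothesis \ref{hyp:H}(ii)); the term $w(s, \Phi_{(0,s)}[\mu^0](y))$ from the explicit perturbation; and the non-local term $\int_{\R^d} \D_\mu v(s, \mu(s), \Phi_{(0,s)}[\mu^0](y))(\Phi_{(0,s)}[\mu^0](y'))\F(s,y')\,\textnormal{d}\mu^0(y')$ produced by the vector-valued chain rule of Remark \ref{rmk:VectVal} (via Corollary \ref{cor:ChainruleBis}) combined with hypothesis \ref{hyp:H}(iii), once the deviation $\mu_\epsilon(s) - \mu(s)$ is recast using the pushforward representation of the first step. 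Existence and uniqueness of $\F \in C^0([0,T] \times K, \R^d)$ solving the integrated form of \eqref{eq:Linearised_CauchyNonLocal} then follow from a standard Banach fixed-point argument exploiting the $L^1$-bounds $l_K(\cdot), L_K(\cdot)$ and the joint continuity of $\D_\mu v$.

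To justify the expansion rigorously with the uniform remainder bound $\sup_{t \in [0,T]} \NormC{o_t(\epsilon)}{0}{K,\R^d} = o(\epsilon)$, I would introduce the error $\rho_\epsilon(t,y) := \Phi^\epsilon_{(0,t)}[\mu_\epsilon(0)]((\Id+\epsilon\F^0)(y)) - \Phi_{(0,t)}[\mu^0](y) - \epsilon \F(t,y)$ and derive for $\rho_\epsilon/\epsilon$ a fixed-point equation amenable to a Gronwall inequality. The main obstacle is controlling the non-local contribution in a way that is uniform in $s \in [0,T]$: the strong chain rule of Corollary \ref{cor:ChainruleBis} expresses $v(s,\mu_\epsilon(s),z) - v(s,\mu(s),z)$ as an integral of $\D_\mu v$ against a displacement, plus a remainder whose size a priori depends on the $L^\infty$-magnitude of the perturbation and on the fattening radius $R$. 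Handling this uniformly requires combining Theorem \ref{thm:ExistenceWass}, which keeps all relevant supports in a common compact set $K$ and hence controls the fattening radius, with the joint continuity of $(\mu,x,y) \mapsto \D_\mu v(t,\mu,x)(y)$ on $\Pcal(K) \times K \times K$ provided by \ref{hyp:H}(iii). Together with the a priori $O(\epsilon)$ bound on $\NormC{\G_\epsilon(s,\cdot) - \Id}{0}{K,\R^d}$ coming from Filippov's estimate, this yields the desired uniform $o(\epsilon)$ control, closing the Gronwall loop and identifying $\partial_\epsilon\G_\epsilon(t,\cdot)|_{\epsilon = 0} = \F(t, \Phi_{(t,0)}[\mu(t)](\cdot))$ as claimed in \eqref{eq:TaylorMeasure}.
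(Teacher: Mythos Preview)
Your proposal is correct and follows essentially the same strategy as the paper: define $\G_\epsilon$ via the composition of perturbed and unperturbed non-local flows, then identify the first-order term as the solution of the linearised Cauchy problem. The organizational difference is that the paper splits the derivative into two pieces, writing
\[
\F(t,x) = \D_x \Phi_{(0,t)}(x)\,\F^0(x) + \Psi(t,x),
\]
where the first term captures the variation coming from the perturbed initial datum (handled via the classical differentiability of flows in the initial condition), and $\Psi$ is the Fr\'echet derivative at $\epsilon=0$ of $\epsilon \mapsto \Phi^\epsilon_{(0,\cdot)}(\cdot)$, for which the paper invokes \cite[Proposition~5]{PMPWass} rather than re-proving it. The ODEs for these two pieces are then derived separately and summed to recover \eqref{eq:Linearised_CauchyNonLocal}. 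Your approach instead treats the full difference quotient at once and closes everything with a single Gr\"onwall estimate on $\rho_\epsilon$; this is more self-contained (no external citation needed) but forces you to manage the initial-condition and velocity perturbations simultaneously. The delicate point you flag---getting the $o(\epsilon)$ remainder from the measure chain rule to be integrable in $s$ uniformly in $x$---is exactly the same issue the paper faces, and it is resolved in both cases by the $L^1$-in-time bounds on $l_K,L_K$ combined with dominated convergence, since the pointwise remainders $o_{s,x}(\epsilon)$ are dominated by the Lipschitz constants times the a~priori $O(\epsilon)$ flow deviation.
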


\begin{proof}
First, remark that for any $\epsilon \in [-1,1]$, the non-local velocity-field $v_{\epsilon} : [0,T] \times \Pcal_c(\R^d) \times \R^d \rightarrow \R^d$ defined for any $(t,\mu,x) \in [0,T] \times \Pcal_c(\R^d) \times \R^d$ by
\begin{equation}
\label{eq:v_epsilon}
v_{\epsilon}(t,\mu,x) := v(t,\mu,x) + \epsilon w(t,x),
\end{equation}
satisfies hypotheses \ref{hyp:CE}. Whence, denoting $\mu^0_{\epsilon} := (\Id + \epsilon \F^0)_{\#} \mu^0$, there exists by Corollary \ref{cor:NonLocalPDE} a unique curve of measures $\mu_{\epsilon}(\cdot)$ solution of \eqref{eq:Perturbed_NonLocal} which can be represented as
\begin{equation}
\label{eq:FlowProof0}
\begin{aligned}
\mu_{\epsilon}(t) & = \Phi^{\epsilon}_{(0,t)}[\mu_{\epsilon}^0](\cdot)_{\#}\mu_{\epsilon}^0,
\end{aligned}
\end{equation}
where $(\Phi^{\epsilon}_{(0,t)}[\mu^0_{\epsilon}](\cdot))_{t \in [0,T]}$ stands for the non-local flow generated by $v_{\epsilon} : [0,T] \times \Pcal_c(\R^d) \times \R^d \rightarrow \R^d$ starting from $\mu^0_{\epsilon}$, defined as in \eqref{eq:FlowExp1}. For the sake of readability, we shall use the condensed notations
\begin{equation*}
\Phi_{(s,t)}(x) := \Phi_{(s,t)}[\mu(s)](x) \qquad \text{and} \qquad \Phi^{\epsilon}_{(s,t)}(x) := \Phi^{\epsilon}_{(s,t)}[\mu_{\epsilon}(s)](x),
\end{equation*}
throughout the remainder of the proof. Recalling that $\mu^0_{\epsilon} = (\Id + \epsilon \F^0)_{\#} \mu^0$ and that the curve  $\mu(\cdot)$ also admits the flow representation \eqref{eq:ExpressionMeasure}, the expression in \eqref{eq:FlowProof0} can be rewritten as
\begin{equation}
\label{eq:FlowProof2}
\mu_{\epsilon}(t) =  \Big( \Phi^{\epsilon}_{(0,t)} \circ \big( \Id + \epsilon \F^0 \big) \circ \Phi_{(t,0)}(\cdot) \Big)_{\raisebox{4pt}{$\scriptstyle{\#}$}} \mu(t),
\end{equation}
for all times $t \in [0,T]$, where ``$\circ$'' is the standard composition operation between functions. Hence, \eqref{eq:PerturbedMeasure_Expr} holds with the family of maps $(\G_{\epsilon}(\cdot,\cdot))$ defined by
\begin{equation}
\label{eq:GepsDef}
\G_{\epsilon} : (t,x) \in [0,T] \times K \mapsto \Phi^{\epsilon}_{(0,t)} \circ \big( \Id + \epsilon \F^0 \big) \circ \Phi_{(t,0)}(x),
\end{equation}
for every $\epsilon \in [-1,1]$. Our goal now is to establish the first-order expansion of \eqref{eq:TaylorMeasure}.

By adapting the argument of \cite[Proposition 5]{PMPWass} to the case of non-local velocity-fields satisfying the localised differentiability assumptions \ref{hyp:H}-$(ii)$ and \ref{hyp:H}-$(iii)$, it can be shown that the map
\begin{equation*}
\epsilon \in [-1,1] \mapsto \Phi^{\epsilon}_{(0,\cdot)}(\cdot) \in C^0([0,T] \times K',\R^d),
\end{equation*}
is Fréchet differentiable at $\epsilon = 0$, where $K':= B(0,R_r')$ and $R_r' > 0$ are such that $(\Id +\epsilon \F^0)(K) \subset K'$ for all $\epsilon \in [-1,1]$. Denoting by $\Psi \in C^0([0,T] \times K',\R^d)$ the corresponding Fréchet derivative, one has
\begin{equation}
\label{eq:TaylorC01}
\Phi^{\epsilon}_{(0,t)}(y) = \Phi_{(0,t)}(y) + \epsilon \Psi(t,y) + o_{t,y}(\epsilon),
\end{equation}
for any $(t,y) \in [0,T] \times K'$, where $\sup_{(t,y) \in [0,T] \times K'} |o_{t,y}(\epsilon)| = o(\epsilon)$. Under hypotheses \ref{hyp:H}-$(i)$ and \ref{hyp:H}-$(ii)$, it is a standard result in the theory of Carathéodory ODEs (see e.g. \cite[Theorem 2.3.2]{BressanPiccoli}) that the flow map $x \in \R^d \mapsto \Phi_{(0,t)}(x) \in \R^d$ is Fréchet-differentiable with
\begin{equation}
\label{eq:TaylorC02}
\Phi_{(0,t)}(x + \epsilon h) = \Phi_{(0,t)}(x) + \epsilon \D_x \Phi_{(0,t)}(x)h + o_{t,x,h}(\epsilon),
\end{equation}
for all times $t \in [0,T]$ and any $x,h \in \R^d$, where
\begin{equation*}
\sup_{(t,x,h) \in [0,T] \times K \times B(0,r')} |o_{t,x,h}(\epsilon)| = o_{r'}(\epsilon),
\end{equation*}
for every $r' > 0$. By merging \eqref{eq:TaylorC01} and \eqref{eq:TaylorC02}, we obtain for all $(t,x) \in [0,T] \times K$ that
\begin{equation}
\label{eq:TaylorC03}
\Phi^{\epsilon}_{(0,t)} \big( x + \epsilon \F^0(x) \big) = \Phi_{(0,t)}(x) + \epsilon \D_x \Phi_{(0,t)}(x) \F^0(x) + \epsilon \Psi(t,x) + o_{t,x}(\epsilon),
\end{equation}
where $\sup_{(t,x) \in [0,T] \times K}|o_{t,x}(\epsilon)| = o(\epsilon)$. We now define the map $\F \in C^0([0,T] \times K,\R^d)$ by
\begin{equation}
\label{eq:Fdef}
\F(t,x) := \D_x \Phi_{(0,t)}(x) \F^0(x) + \Psi(t,x),
\end{equation}
for any $(t,x) \in [0,T] \times K$. By merging \eqref{eq:GepsDef} and \eqref{eq:TaylorC03}, we obtain that $\epsilon \in [-1,1] \mapsto \G_{\epsilon}(t,\cdot) \in C^0(K,\R^d)$ satisfies for all times $t \in [0,T]$ the following Taylor expansion at $\epsilon = 0$
\begin{equation}
\label{eq:F_Taylor}
\G_{\epsilon}(t,\cdot) = \Id + \epsilon \F \big(t, \Phi_{(t,0)}(\cdot) \big) + o_t(\epsilon),
\end{equation}
where $\sup_{t \in [0,T]} \NormC{o_t(\epsilon)}{0}{K,\R^d} = o(\epsilon)$ for every $\epsilon \in [-1,1]$.

We now prove that the map $\F \in C^0([0,T] \times K,\R^d)$ defined in \eqref{eq:Fdef} is the unique solution of the linearised Cauchy problem \eqref{eq:Linearised_CauchyNonLocal}. First, recall that by classical results on differentials of flow maps (see e.g. \cite[Theorem 2.3.2]{BressanPiccoli}), the application $t \in [0,T] \mapsto \D_x \Phi_{(0,t)}(x) \F^0(x) \in \R^d$ is the unique solution of
\begin{equation}
\label{eq:ODE_Exp1}
\D_x \Phi_{(0,t)}(x) \F^0(x) = \F^0(x) + \INTSeg{\D_x v \Big(s,\mu(s), \Phi_{(0,s)}(x) \Big) \Big( \D_x \Phi_{(0,s)}(x) \F^0(x) \Big)}{s}{0}{t},
\end{equation}
for any $x \in K$. By \eqref{eq:TaylorC01}, it further holds
\begin{equation}
\label{eq:ODE_Exp2}
\begin{aligned}
v \Big( s , \mu_{\epsilon}(s) , \Phi^{\epsilon}_{(0,s)}(x) \Big) & = v \Big( s ,  \mu_{\epsilon}(s) , \Phi_{(0,s)}(x) + \epsilon \Psi(s,x) + o_{s,x}(\epsilon) \Big) \\
& = v \Big( s , \mu_{\epsilon}(s) , \Phi_{(0,s)}(x) \Big) + \epsilon \D_x v \Big( s , \mu_{\epsilon}(s) , \Phi_{(0,s)}(x) \Big) \Psi(s,x) + o_{s,x}(\epsilon) \\
& = v \Big( s , \mu_{\epsilon}(s) , \Phi_{(0,s)}(x) \Big) + \epsilon \D_x v \Big( s , \mu(s) , \Phi_{(0,s)}(x) \Big) \Psi(s,x) + o_{s,x}(\epsilon),
\end{aligned}
\end{equation}
for $\Lcal^1$-almost every $s \in [0,T]$ and any $(\epsilon,x) \in [-1,1] \times K'$, with
\begin{equation*}
\INTSeg{\sup_{x \in K'} |o_{s,x}(\epsilon)|}{s}{0}{T} = o(\epsilon),
\end{equation*}
and where we also used that $\nu \in \Pcal(K') \mapsto \D_x v(s,\nu, \Phi_{(0,s)}(x))$ is continuous by \ref{hyp:H}-$(ii)$. Using \ref{hyp:H}-$(iii)$, we can in turn apply the vector-valued chain rule of Remark \ref{rmk:VectVal} together with \eqref{eq:FlowProof2}, \eqref{eq:GepsDef} and \eqref{eq:F_Taylor} to obtain
\begin{equation}
\label{eq:ODE_Exp3}
\begin{aligned}
& v \Big( s,\mu_{\epsilon}(s), \Phi_{(0,s)}(x) \Big) \\
&  = v \bigg( s , \Big( \Id + \epsilon \F (s,\Phi_{(s,0)}(\cdot)) + o_s(\epsilon) \Big)_{\raisebox{4pt}{$\scriptstyle \#$}} \mu(s), \Phi_{(0,s)}(x) \bigg) \\
& = v \Big( s,\mu(s), \Phi_{(0,s)}(x) \Big) + \epsilon \INTDom{\D_{\mu} v \Big( s , \mu(s),\Phi_{(0,s)}(x) \Big) \hspace{-0.035cm} (y) \F \big( s,\Phi_{(s,0)}(y) \big) }{\R^d}{\mu(s)(y)} + o_s(\epsilon) \\
& = v \Big( s,\mu(s), \Phi_{(0,s)}(x) \Big) + \epsilon \INTDom{\D_{\mu} v \Big( s , \mu(s),\Phi_{(0,s)}(x) \Big) \hspace{-0.035cm} \big( \Phi_{(0,s)}(y) \big) \, \F(s,y)}{\R^d}{\mu^0(y)} + o_s(\epsilon), 
\end{aligned}
\end{equation}
for $\Lcal^1$-almost every $s \in [0,T]$ and all $y \in K'$. Observe now that for every $\epsilon \in [-1,1]$, the family of perturbed flows $(\Phi^{\epsilon}_{(0,t)}(\cdot))_{t \in [0,T]}$ can be characterised as the unique solution of the Carathéodory ODE
\begin{equation}
\label{eq:FlowProof1}
\begin{aligned}
\Phi^{\epsilon}_{(0,t)}(y) & = y + \INTSeg{v_{\epsilon} \Big( s , \Phi^{\epsilon}_{(0,s)}(\cdot)_{\#} \mu^0_{\epsilon}, \Phi^{\epsilon}_{(0,s)}(y) \Big)}{s}{0}{t} \\
& = y + \INTSeg{ \bigg( v \Big( s , \mu_{\epsilon}(s) , \Phi^{\epsilon}_{(0,s)}(y) \Big) + \epsilon w \Big( s , \Phi^{\epsilon}_{(0,s)}(y) \Big) \bigg)}{s}{0}{t},
\end{aligned}
\end{equation}
for any $(t,y) \in [0,T] \times K'$. Therefore by plugging \eqref{eq:TaylorC01}, \eqref{eq:ODE_Exp2} and \eqref{eq:ODE_Exp3} into \eqref{eq:FlowProof1}, we further obtain
\begin{equation*}
\begin{aligned}
\Phi_{(0,t)}(x) + \epsilon \Psi(t,x) = x  & + \INTSeg{v \Big( s,\mu(s) , \Phi_{(0,s)}(x) \Big)}{s}{0}{t} + \epsilon \INTSeg{\D_x v \Big( s,\mu(s), \Phi_{(0,s)}(x) \Big) \Psi(s,x)}{s}{0}{t} \\
& + \epsilon \INTSeg{ \INTDom{\D_{\mu} v \Big( s,\mu(s), \Phi_{(0,s)}(x) \Big) \big( \Phi_{(0,s)}(y) \big) \, \F(s,y)}{\R^d}{\mu^0(y)}}{s}{0}{t} \\
& + \epsilon \INTSeg{w \Big( s, \Phi_{(0,s)}(x) \Big)}{s}{0}{t} + o_{t,x}(\epsilon),
\end{aligned}
\end{equation*}
for any $(t,x) \in [0,T] \times K$ and all $\epsilon \in [-1,1]$, where $\sup_{(t,x) \in [0,T] \times K} |o_{t,x}(\epsilon)| = o(\epsilon)$. By identifying terms and observing that the flow maps $(\Phi_{(0,t)}(\cdot))_{t \in [0,T]}$ satisfy \eqref{eq:FlowExp1}, we recover the ODE characterisation
\begin{equation}
\label{eq:ODE_Exp4}
\begin{aligned}
\Psi(t,x) & = \INTSeg{\bigg( \D_x v \Big( s,\mu(s), \Phi_{(0,s)}(x) \Big) \Psi(s,x) + w \Big( s , \Phi_{(0,s)}(x) \Big) \bigg)}{s}{0}{t}  \\
& \hspace{0.4cm} + \INTSeg{ \INTDom{\D_{\mu} v \Big( s,\mu(s), \Phi_{(0,s)}(x) \Big) \big( \Phi_{(0,s)}(y) \big) \, \F(s,y)}{\R^d}{\mu^0(y)}}{s}{0}{t},
\end{aligned}
\end{equation}
for all $(t,x) \in [0,T] \times K$. Upon combining \eqref{eq:ODE_Exp1} and \eqref{eq:ODE_Exp4}, we conclude that the map $\F \in C^0([0,T] \times K,\R^d)$ defined in \eqref{eq:Fdef} solves \eqref{eq:Linearised_CauchyNonLocal}. The uniqueness of solutions to this linearised problem can then be recovered under hypotheses \ref{hyp:H} from Gr\"onwall's Lemma.
\end{proof}

In the following theorem, we show that when the velocity perturbation $w : [0,T] \times \R^d \rightarrow \R^d$ belongs to the tangent cone to the admissible velocities of a continuity inclusion of the form \eqref{eq:WassIncDef} with a convexified right-hand side, there exist solutions of the non-convexified inclusion whose distance to the measure defined in \eqref{eq:PerturbedMeasure_Expr} is infinitesimally small compared to $\epsilon > 0$.

\begin{Def}[Tangent cones to convex hulls of $C^0(\R^d,\R^d)$-valued multifunctions]
\label{def:AdjacentCone_C0}
Let $V : [0,T] \times \Pcal_c(\R^d) \rightrightarrows C^0(\R^d,\R^d)$ be a set-valued map and $v \in \co V(t,\mu)$ for some $t \in [0,T]$ and $\mu \in \Pcal_c(\R^d)$, where $\co V(t,\mu)$ is given in the sense of \eqref{eq:ConvbarDef}. The \textnormal{tangent cone} to $\co V(t,\mu)$ at $v$ is defined by
\begin{equation*}
T_{\co V(t,\mu)}(v) := \bigg\{ w \in C^0(\R^d,\R^d) ~\text{s.t.}~ w_{\vert K} \in T_{\co V_K(t,\mu)}(v_{\vert K}) ~ \text{for any $K := B(0,R)$ with $R > 0$} \bigg\},
\end{equation*}
where $T_{\co V_K(t,\mu)}(v_{\vert K})$ is taken in the Banach space $\big(C^0(K,\R^d),\NormC{\cdot}{0}{K,\R^d} \big)$.
\end{Def}

\begin{thm}[Variational linearisation]
\label{thm:Variational_Inclusion}
Let $\mu^0 \in \Pcal(B(0,r))$ for some $r > 0$ and $V : [0,T] \times \Pcal_c(\R^d) \rightrightarrows C^0(\R^d,\R^d)$ be a set-valued map satisfying hypotheses \textnormal{\ref{hyp:DI}} with $p \in [1,+\infty)$. Moreover, let $(\mu(\cdot),\vb(\cdot))$ be a trajectory-selection pair solution of the inclusion \eqref{eq:WassIncDef}, and suppose that for $\Lcal^1$-almost every $t \in [0,T]$ the following representation
\begin{equation*}
\vb(t) := v(t,\mu(t)) \in V(t,\mu(t)),
\end{equation*}
holds with $v : [0,T] \times \Pcal_c(\R^d) \times \R^d \rightarrow \R^d$ satisfying hypotheses \ref{hyp:H} and $v(t,\nu) \in V(t,\nu)$ for $\Lcal^1$-almost every $t \in [0,T]$ and any $\nu \in \Pcal_c(\R^d)$. Let $\F^0 \in C^0(\R^d,\R^d)$, $w : [0,T] \times \R^d \rightarrow \R^d$ be a Carathéodory vector-field satisfying hypotheses \ref{hyp:CE} along with the pointwise inclusions
\begin{equation}
\label{eq:Tangent_Inc}
w(t) \in T_{\co V(t,\mu(t))} \big( v(t,\mu(t)) \big) \qquad \text{for $\Lcal^1$-almost every $t \in [0,T]$},
\end{equation}
and $\F : [0,T] \times \R^d \rightarrow \R^d$ be the corresponding solution of the linearised Cauchy problem \eqref{eq:Linearised_CauchyNonLocal}.

Then for any $\epsilon \in [0,1]$, there exists a solution $\tilde{\mu}_{\epsilon}(\cdot)$ of the continuity inclusion
\begin{equation}
\label{eq:VarInc_Diff}
\left\{
\begin{aligned}
& \partial_t \tilde{\mu}_{\epsilon}(t) \in - \Div \Big( V(t,\tilde{\mu}_{\epsilon}(t)) \tilde{\mu}_{\epsilon}(t) \Big), \\
& \tilde{\mu}_{\epsilon}(0) = (\Id + \epsilon \F^0)_{\#} \mu^0,
\end{aligned}
\right.
\end{equation}
such that
\begin{equation}
\label{eq:VarInc_Est}
\sup_{t \in [0,T]} W_p \Big( \tilde{\mu}_{\epsilon}(t), \G_{\epsilon}(t,\cdot)_{\#} \mu(t) \Big) = o(\epsilon),
\end{equation}
where the family of maps $(\G_{\epsilon}(\cdot,\cdot)) \subset C^0([0,T] \times \R^d,\R^d)$ is given as in Proposition \ref{prop:Linearisation_Cauchy}.
\end{thm}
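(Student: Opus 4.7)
The plan is to approximate $\mu_\epsilon^{(1)}(t) := \G_\epsilon(t,\cdot)_{\#} \mu(t)$ by an exact trajectory of the convexified continuity inclusion $\partial_t \nu \in -\Div(\co V(t,\nu)\nu)$ starting from $(\Id + \epsilon \F^0)_{\#} \mu^0$, at $W_p$-distance $o(\epsilon)$, and then to invoke the Relaxation Theorem \ref{thm:RelaxationWass} with precision $\delta_\epsilon = o(\epsilon)$ (say $\delta_\epsilon := \epsilon^{3/2}$) to descend to a solution of the original inclusion \eqref{eq:VarInc_Diff}. The bound \eqref{eq:VarInc_Est} then follows by the triangle inequality. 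The construction of the relaxed-inclusion proxy proceeds in two sub-steps: (i) a measurable selection from the tangent cone condition \eqref{eq:Tangent_Inc}, and (ii) a Filippov-type correction using the multifunction $\co V$.

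For step (i), Theorem \ref{thm:ExistenceWass} furnishes a compact $K := B(0,R)$ containing $\supp(\mu_\epsilon^{(1)}(t))$ uniformly in $(\epsilon,t)\in [0,1]\times[0,T]$, and the characterization \eqref{eq:AlternativeCone} of the tangent cone in the Banach space $C^0(K,\R^d)$ applied pointwise to \eqref{eq:Tangent_Inc} yields, for $\Lcal^1$-almost every $t$, a map $w_\epsilon(t) \in C^0(K,\R^d)$ with $\|w_\epsilon(t)-w(t)\|_{C^0(K,\R^d)} \to 0$ and $v(t,\mu(t)) + \epsilon w_\epsilon(t) \in \co V_K(t,\mu(t))$. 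Invoking the Kuratowski--Ryll-Nardzewski selection theorem on the closed-valued multifunction $t \rightrightarrows \{w' \in C^0(K,\R^d) \,:\, v(t,\mu(t)) + \epsilon w' \in \co V_K(t,\mu(t))\}$ intersected with a suitably shrinking ball around $w(t)$ makes $w_\epsilon(\cdot)$ measurable, and the integrable domination $\|w_\epsilon(t)\|_{C^0(K,\R^d)} \leq C m(t)$ obtained from \ref{hyp:DI}-$(ii)$ (passed to $\co V$) and the dominated convergence theorem promote pointwise to $L^1$ convergence. Let $z_\epsilon(\cdot)$ denote the unique solution of
\begin{equation*}
\partial_t z_\epsilon + \Div\big((v(t,z_\epsilon) + \epsilon w_\epsilon(t)) z_\epsilon\big) = 0, \qquad z_\epsilon(0) = (\Id + \epsilon \F^0)_{\#}\mu^0
\end{equation*}
supplied by Corollary \ref{cor:NonLocalPDE}. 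Comparing $z_\epsilon$ with $\mu_\epsilon^{(1)}$, whose driving velocity differs by $\epsilon(w_\epsilon(t) - w(t))$, the Wasserstein stability estimates behind Theorem \ref{thm:ExistenceWass} give $\sup_t W_p(z_\epsilon(t), \mu_\epsilon^{(1)}(t)) = o(\epsilon)$.

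For step (ii), apply Filippov's Theorem \ref{thm:FilippovEstimate} with reference curve $z_\epsilon(\cdot)$, driving Carathéodory field $(t,x)\mapsto v(t,z_\epsilon(t),x)+\epsilon w_\epsilon(t,x)$, multifunction $\co V$ (which inherits hypotheses \ref{hyp:DI}), and initial datum $z_\epsilon(0)$. The critical mismatch $\eta_\epsilon(t) := \dist_{C^0(K,\R^d)}(v(t,z_\epsilon(t)) + \epsilon w_\epsilon(t)\,;\, \co V_K(t,z_\epsilon(t)))$ is controlled as follows: by construction $v(t,\mu) + \epsilon w_\epsilon \in \co V_K(t,\mu)$; Taylor expanding via the vector-valued chain rule \eqref{eq:TaylorMeasure_VectorvaluedBis} identifies $v(t,z_\epsilon) - v(t,\mu)$ to first order with the drift kernel $\int \D_\mu v(t,\mu)(y)\F(t,y)\,d\mu(y)$ that appears in the linearized Cauchy problem \eqref{eq:Linearised_CauchyNonLocal}; finally the Lipschitz regularity \ref{hyp:DI}-$(iv)$ of $\co V$ in $\mu$ combined with $W_p(\mu,z_\epsilon) = O(\epsilon)$ reduces the Hausdorff displacement of $\co V(t,\mu)$ versus $\co V(t,z_\epsilon)$ to an $O(\epsilon)$ estimate that is algebraically cancelled by the explicit drift expansion of $v$. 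The outcome is $\eta_\epsilon(t) = o_t(\epsilon)$ pointwise with an $L^1$-integrable $O(\epsilon)$ domination, whence $\int_0^T \eta_\epsilon(t)\,dt = o(\epsilon)$ by dominated convergence. Filippov then produces $\mu_\epsilon^{(2)}(\cdot)$ solving the convexified inclusion with $\sup_t W_p(z_\epsilon,\mu_\epsilon^{(2)}) \leq C_p\int_0^T \eta_\epsilon = o(\epsilon)$, and Theorem \ref{thm:RelaxationWass} applied to $\mu_\epsilon^{(2)}(\cdot)$ with $\delta_\epsilon = \epsilon^{3/2}$ delivers the required $\tilde{\mu}_\epsilon(\cdot)$.

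The principal obstacle lies in the $o(\epsilon)$ bound on $\int \eta_\epsilon$: a crude combination of the pointwise tangent-cone estimate (valid only at the base measure $\mu$) with the Lipschitz regularity of $\co V$ and $v$ gives only $O(\epsilon)$, which would leave the final closeness at the same order as the perturbation rather than at $o(\epsilon)$. The saving is structural and rests entirely on the local differentiability \ref{hyp:H}-$(iii)$ of $v$ with respect to $\mu$, which forces the first-order variation of the driving velocity under the displacement $\mu \to z_\epsilon$ to coincide with the drift kernel that defines the linearized equation \eqref{eq:Linearised_CauchyNonLocal}; only after this cancellation does the residual fall into the $o(\epsilon)$ regime. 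Ensuring that the measurable selection of tangent-cone perturbations, the projection onto the $C^0(K,\R^d)$-valued convex sets $\co V_K(t,z_\epsilon(t))$, and the chain-rule expansion all fit together in a quantitative way concentrates the bulk of the technical work.
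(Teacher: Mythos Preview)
Your overall architecture matches the paper's: build a curve close to $\G_\epsilon(t,\cdot)_{\#}\mu(t)$ that solves the convexified inclusion via Filippov, then invoke the Relaxation Theorem. The gap is in step~(ii), where you claim an ``algebraic cancellation'' reducing the mismatch $\eta_\epsilon$ from $O(\epsilon)$ to $o(\epsilon)$.

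That cancellation does not occur. You correctly note that $v(t,z_\epsilon)-v(t,\mu)=\epsilon\cdot[\text{drift kernel}]+o(\epsilon)$ by \ref{hyp:H}-$(iii)$, and that the Hausdorff distance between $\co V_K(t,\mu)$ and $\co V_K(t,z_\epsilon)$ is $O(\epsilon)$ by \ref{hyp:DI}-$(iv)$. But these two facts cannot be matched: the Lipschitz bound on $\co V$ is purely metric and carries no directional information, so the element of $\co V_K(t,z_\epsilon)$ that is $O(\epsilon)$-close to $v(t,\mu)+\epsilon w_\epsilon\in\co V_K(t,\mu)$ has no reason to differ from it by $\epsilon\cdot[\text{drift kernel}]$. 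Hypothesis \ref{hyp:H}-$(iii)$ concerns only the \emph{selection} $v$, not the multifunction $V$; one smooth selection does not constrain the first-order displacement of the entire convex set. With only these ingredients, $\eta_\epsilon(t)$ is genuinely $O(\epsilon)$, and Filippov gives a proxy at distance $O(\epsilon)$, the same order as the perturbation, which is useless.

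The paper closes this gap by a different, purely convex-analytic device. It selects $w_\epsilon$ so that $v(t,\mu)+\sqrt{\epsilon}\,w_\epsilon(t)\in\co V_{K'}(t,\mu)$ (note the square root), transfers this into $\co V_{K'}(t,\mu_\epsilon)$ at cost $O(\epsilon)$ via \ref{hyp:DI}-$(iv)$, and then forms the convex combination $(1-\sqrt{\epsilon})\,v(t,\mu_\epsilon)+\sqrt{\epsilon}\,[\cdot]$ inside $\co V_{K'}(t,\mu_\epsilon)$. The outer factor $\sqrt{\epsilon}$ multiplies both the $O(\epsilon)$ Lipschitz error and the $O(\epsilon)$ difference $v(t,\mu)-v(t,\mu_\epsilon)$, producing $O(\epsilon^{3/2})$; together with $\epsilon\|w-w_\epsilon\|\to 0$ this yields $\|\eta_\epsilon\|_1=o(\epsilon)$. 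No differentiability of $v$ in $\mu$ is used at this step; \ref{hyp:H}-$(iii)$ enters only through Proposition~\ref{prop:Linearisation_Cauchy} to identify $\mu_\epsilon$ with $\G_\epsilon(t,\cdot)_{\#}\mu(t)$. Your plan becomes correct if you replace the cancellation argument by this $\sqrt{\epsilon}$-convexity trick.
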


\begin{proof}
In the sequel, we shall interchangeably use the notations $v(t,\mu(t))$ and $w(t)$ to denote the vector-fields defined on $\R^d$ or their restrictions to some closed ball $K := B(0,R)$.

First, remark that $v : [0,T] \times \Pcal_c(\R^d) \times \R^d \mapsto \R^d$ satisfies hypotheses \ref{hyp:CE} as a direct consequence of \ref{hyp:H}-$(i)$. Hence, by Corollary \ref{cor:NonLocalPDE}, there exists $R_r > 0$ such that $\supp(\mu(t)) \subset K := B(0,R_r)$ for all times $t \in [0,T]$. Furthermore observe that since $w(\cdot)$ satisfies \eqref{eq:Tangent_Inc}, it holds by Definition \ref{def:AdjacentConeBanach} that
\begin{equation}
\label{eq:Proof_VarInc1}
\dist_{C^0(K,\R^d)} \Big( v(t,\mu(t)) + \epsilon w(t) ; \co V_K(t,\mu(t)) \Big) = o_t(\epsilon),
\end{equation}
for $\Lcal^1$-almost every $t \in [0,T]$ and all $\epsilon \in [0,1]$, where $o_t(\epsilon)/\epsilon \rightarrow 0$ as $\epsilon \rightarrow 0^+$. Moreover, one also has 
\begin{equation}
\label{eq:Proof_VarInc1Bis}
\dist_{C^0(K,\R^d)} \Big( v(t,\mu(t)) + \epsilon w(t) ; \co V_K(t,\mu(t)) \Big) \leq  \epsilon \NormC{w(t)}{0}{K,\R^d},
\end{equation}
because $v(t,\mu(t)) \in V(t,\mu(t))$, which implies in particular that the remainder defined in \eqref{eq:Proof_VarInc1} divided by $\epsilon > 0$ is integrably bounded. In addition, observe that the non-local velocity field
\begin{equation*}
v_{\epsilon} : (t,\mu,x) \in [0,T] \times \Pcal_c(\R^d) \times \R^d \mapsto v(t,\mu,x) + \epsilon w(t,x),
\end{equation*}
also satisfies hypotheses \ref{hyp:CE} for every $\epsilon \in [0,1]$ by construction.

Denote by $\mu_{\epsilon}(\cdot) \in \AC([0,T],\Pcal_c(\R^d))$ the unique solution of the perturbed Cauchy problem
\begin{equation}
\label{eq:Proof_PerturbedCauchy}
\left\{
\begin{aligned}
& \partial_t \mu_{\epsilon}(t) + \Div \Big( \big( v(t,\mu_{\epsilon}(t)) + \epsilon w(t) \big) \mu_{\epsilon}(t) \Big) = 0, \\
& \mu_{\epsilon}(0) = (\Id + \epsilon \F^0)_{\#} \mu^0,
\end{aligned}
\right.
\end{equation}
and let $K' := B(0,R_r')$ be a closed ball such that $\supp(\mu_{\epsilon}(t)) \subset K'$ for any $(t,\epsilon) \in [0,T] \times [0,1]$. By Theorem \ref{thm:FilippovEstimate} applied to $V(\cdot,\cdot) := \{ v(\cdot,\cdot) \}$ together with Corollary \ref{cor:NonLocalPDE}, there exists $C>0$ such that
\begin{equation}
\label{eq:Proof_VarInc2}
\sup_{t \in [0,T]} W_p(\mu(t),\mu_{\epsilon}(t)) \leq C \epsilon,
\end{equation}
for every $\epsilon \in [0,1]$. Since $w(\cdot)$ satisfies \eqref{eq:Proof_VarInc1} and \eqref{eq:Proof_VarInc1Bis}, we can apply \cite[Theorem 8.1.3]{Aubin1990} to recover the existence of a family $(w_{\epsilon}(\cdot))$ of $\Lcal^1$-measurable maps such that 
\begin{equation}
\label{eq:Proof_VarInc2Bis}
\left\{
\begin{aligned}
& \NormC{w(t) - w_{\epsilon}(t)}{0}{K',\R^d} \underset{\epsilon \rightarrow 0^+}{\longrightarrow} 0 \quad \text{and} \quad \NormC{w_{\epsilon}(t)}{0}{K',\R^d} \leq 2 \NormC{w(t)}{0}{K',\R^d}+1,\\
& \, v(t,\mu(t)) + \sqrt{\epsilon} w_{\epsilon}(t) \in \co V_{K'}(t,\mu(t)) ,
\end{aligned}
\right.
\end{equation}
for $\Lcal^1$-almost every $t \in [0,T]$ and all $\epsilon \in [0,1]$. By hypothesis \ref{hyp:DI}-$(iv)$ together with \eqref{eq:Proof_VarInc2} and \cite[Theorem 8.1.3]{Aubin1990}, one can also find measurable selections $t \in [0,T] \mapsto \vb_{\epsilon}(t) \in \co V_{K'}(t,\mu_{\epsilon}(t))$ such that
\begin{equation}
\label{eq:Proof_VarInc3}
\NormC{v(t,\mu(t)) + \sqrt{\epsilon} w_{\epsilon}(t) - \vb_{\epsilon}(t)}{0}{K',\R^d} \leq L_{K'}(t) W_p(\mu(t),\mu_{\epsilon}(t)) \leq L_{K'}(t) C \epsilon,
\end{equation}
for $\Lcal^1$-almost every $t \in [0,T]$ and any $\epsilon \in [0,1]$. Moreover observing that the sets $\co V_{K'}(t,\mu_{\epsilon}(t)) \subset C^0(K',\R^d)$ are convex, it holds
\begin{equation*}
(1-\sqrt{\epsilon}) v(t,\mu_{\epsilon}(t)) + \sqrt{\epsilon} \vb_{\epsilon}(t) \in \co V_{K'}(t,\mu_{\epsilon}(t)),
\end{equation*}
which along with \eqref{eq:Proof_VarInc3} further yields
\begin{equation}
\label{eq:Proof_VarInc4}
(1-\sqrt{\epsilon}) v(t,\mu_{\epsilon}(t)) + \sqrt{\epsilon} \big( v(t,\mu(t)) + \sqrt{\epsilon} w_{\epsilon}(t) \big) \in \co V_{K'}(t,\mu_{\epsilon}(t)) + L_{K'}(t) C \epsilon^{3/2} \, \B_{C^0(K',\R^d)},
\end{equation}
for $\Lcal^1$-almost every $t \in [0,T]$ and any $\epsilon \in [0,1]$. Recalling that $v : [0,T] \times \Pcal_c(\R^d) \times \R^d \rightarrow \R^d$ satisfies hypotheses \ref{hyp:CE}, we again have by \eqref{eq:Proof_VarInc2} 
\begin{equation*}
\NormC{v(t,\mu(t)) - v(t,\mu_{\epsilon}(t))}{0}{K',\R^d} ~\leq~ L_{K'}(t) W_p(\mu(t),\mu_{\epsilon}(t)) ~\leq~ L_{K'}(t) C \epsilon,
\end{equation*}
for $\Lcal^1$-almost every $t \in [0,T]$. Upon plugging this last estimate into \eqref{eq:Proof_VarInc4}, we obtain
\begin{equation*}
\begin{aligned}
v(t,\mu_{\epsilon}(t)) + \epsilon w(t) & = (1-\sqrt{\epsilon}) v(t,\mu_{\epsilon}(t)) + \sqrt{\epsilon} \big( v(t,\mu_{\epsilon}(t)) + \sqrt{\epsilon} w_{\epsilon}(t) \big) + \epsilon \big( w(t) - w_{\epsilon}(t) \big) \\
& \in \co V_{K'}(t,\mu_{\epsilon}(t)) + \Big( 2 L_{K'}(t) C \epsilon^{3/2} +  \epsilon \NormC{w(t)-w_{\epsilon}(t)}{0}{K',\R^d} \Big) \B_{C^0(K',\R^d)}.
\end{aligned}
\end{equation*}
This together with the properties \eqref{eq:Proof_VarInc2Bis} of the family of maps  $(w_{\epsilon}(\cdot))$ and the fact that $L_{K'}(\cdot) \in L^1([0,T],\R_+)$ imply that for every $\epsilon \in [0,1]$, the mismatch function
\begin{equation}
\eta_{\epsilon} : t \in [0,T] \mapsto \dist_{C^0(K',\R^d)} \Big( v(t,\mu_{\epsilon}(t)) + \epsilon w(t) ; \co V_{K'}(t,\mu_{\epsilon}(t)) \Big),
\end{equation}
is integrable and satisfies $r(\epsilon) := \Norm{\eta_{\epsilon}(\cdot)}_1 = o(\epsilon)$ for all $\epsilon \in [0,1]$ by Lebesgue's dominated convergence theorem. Whence, we can apply Theorem \ref{thm:FilippovEstimate} to obtain the existence of a curve $\tilde{\mu}_{\epsilon}(\cdot)$ solution of
\begin{equation}
\label{eq:Proof_RelaxedInc}
\left\{
\begin{aligned}
& \partial_t \tilde{\mu}_{\epsilon}(t) \in - \Div \Big( \co V(t,\tilde{\mu}_{\epsilon}(t)) \tilde{\mu}_{\epsilon}(t) \Big), \\
& \tilde{\mu}_{\epsilon}(0) = (\Id + \epsilon \F^0)_{\#} \mu^0,
\end{aligned}
\right.
\end{equation}
such that
\begin{equation}
\label{eq:Proof_DistEst}
\sup_{t \in [0,T]} W_p(\tilde{\mu}_{\epsilon}(t),\mu_{\epsilon}(t)) = o(\epsilon),
\end{equation}
for any $\epsilon \in [0,1]$. Moreover by Theorem \ref{thm:RelaxationWass} applied with $\delta := r(\epsilon)$, we can choose the curve $\tilde{\mu}_{\epsilon}(\cdot)$ satisfying \eqref{eq:Proof_DistEst} as a solution of \eqref{eq:VarInc_Diff} instead of \eqref{eq:Proof_RelaxedInc}. Finally, observe that by Proposition \ref{prop:Linearisation_Cauchy} one can express $\mu_{\epsilon}(t)$ for all $t \in [0,T]$ as
\begin{equation}
\label{eq:Proof_mupert}
\mu_{\epsilon}(t) = \G_{\epsilon}(t,\cdot)_{\#} \mu(t),
\end{equation}
where the family of maps $(\G_{\epsilon}(\cdot,\cdot)) \subset C^0([0,T] \times K,\R^d)$ satisfies \eqref{eq:TaylorMeasure}. This together with \eqref{eq:Proof_DistEst} yields
\begin{equation*}
\sup_{t \in [0,T]} W_p(\tilde{\mu}_{\epsilon}(t),\G_{\epsilon}(t,\cdot)_{\#} \mu(t)) = o(\epsilon),
\end{equation*}
which concludes the proof of Theorem \ref{thm:Variational_Inclusion}.
\end{proof}


\section{Pontryagin Maximum Principle in Wasserstein spaces}
\label{section:PMP}

In this section, we apply the differential-theoretic concepts studied in Section \ref{section:NonSmoothWass} to prove a \textit{Pontryagin Maximum Principle} (``PMP'' in the sequel) for optimal control problems in Wasserstein spaces with inequality final-point constraints. This result partially improves those of \cite{PMPWassConst,PMPWass} as the strategy is more concise and allows to discriminate between the normal and abnormal scenarios of the PMP.

To lighten the exposition, we will illustrate the full proof strategy on a \textit{Mayer} problem in Section \ref{subsection:Mayer}, and we will then prove the PMP for a \textit{Bolza} problem in Section \ref{subsection:Bolza} by adapting a standard procedure which directly builds on the PMP for the Mayer problem.


\subsection{The PMP for constrained Mayer problems in Wasserstein spaces}
\label{subsection:Mayer}

In this section, we focus our attention on the following constrained Mayer problem
\begin{equation*}
(\Ppazo_\MC) ~ \left\{
\begin{aligned}
\min_{u(\cdot) \in \U} & \Big[ \varphi(\mu(T)) \Big] \\
\text{s.t.}~ & \left\{
\begin{aligned}
& \partial_t \mu(t) + \Div \Big( v (t,\mu(t),u(t)) \mu(t) \Big) = 0, \\
& \mu(0) \, = \mu^0, \\
& \mu(T) \in \Qpazo_T, \\
\end{aligned}
\right.
\end{aligned}
\right.
\end{equation*}
where $\varphi : \Pcal_c(\R^d) \rightarrow \R$ is a given final cost. Here, we fix an initial datum $\mu^0 \in \Pcal_c(\R^d)$ and a controlled non-local velocity field $v : [0,T] \times \Pcal_c(\R^d) \times U \times \R^d \rightarrow \R^d$. The minimisation in $(\Ppazo_{\MC})$ is taken over the set of admissible open-loop controls
\begin{equation*}
\U := \Big\{ u : [0,T] \rightarrow U ~\text{s.t. $u(\cdot)$ is $\Lcal^1$-measurable} \Big\},
\end{equation*}
where $(U,d_U)$ is a compact metric space, and we suppose that the set of \textit{final-point constraints} $\Qpazo_T$ is defined by a collection of functional inequalities of the form
\begin{equation*}
\Qpazo_T := \Big\{ \mu \in \Pcal_2(\R^d) ~\text{s.t.}~ \Psi_i(\mu) \leq 0 ~ \text{for all $i \in \{1,\dots,n\}$} \Big\},
\end{equation*}
where $\Psi_i : \Pcal_c(\R^d) \rightarrow \R$ for every index $i \in \{ 1,\dots,n\}$. From now on, we consider $\Pcal_c(\R^d)$ as a subset of the metric space $(\Pcal_1(\R^d),W_1)$.

\begin{taggedhyp}{\textbn{(MCP)}}
\label{hyp:MCP}
For every $R >0$, assume that the following holds with $K := B(0,R)$. 
\begin{enumerate}
\item[$(i)$] For every $u \in U$, the non-local velocity-field $(t,\mu,x) \in [0,T] \times \Pcal_c(\R^d) \times \R^d \mapsto v(t,\mu,u)(x) \in \R^d$ satisfies hypotheses \ref{hyp:H} with $p=1$ and constants that are independent of $u \in U$. Moreover, the map $u \in U \mapsto v(t,\mu,u)(x) \in \R^d$ is continuous for $\Lcal^1$-almost every $t \in [0,T]$ and any $(\mu,x) \in \Pcal_c(\R^d) \times \R^d$.
\item[$(ii)$] The final cost $\varphi(\cdot)$ and the constraint functionals $\{\Psi_i(\cdot) \}_{1 \leq i \leq n}$ are Lipschitz continuous in the $W_1$-metric over $\Pcal(K)$ and locally differentiable over $\Pcal_c(\R^d)$. Moreover, the maps
\begin{equation*}
x \in \R^d \mapsto \nabla \varphi(\mu)(x) \in \R^d \qquad \text{and} \qquad x \in \R^d \mapsto \nabla \Psi_i(\mu)(x) \in \R^d,
\end{equation*}
are continuous for every $i \in \{1,\dots,n\}$.
\end{enumerate}
\end{taggedhyp}

As illustrated in \cite{ContInc} and recalled in Theorem \ref{thm:ControlInc} above, the set of all trajectories $\mu(\cdot)$ satisfying
\begin{equation}
\label{eq:NonLocalBis}
\qquad \quad \partial_t \mu(t) + \Div \big( v(t,\mu(t),u(t)) \mu(t) \big) = 0 ~~ \text{for some $u(\cdot) \in \U$}, 
\end{equation}
coincides exactly with the solution set of the continuity inclusion
\begin{equation}
\label{eq:DiffBis}
\partial_t \mu(t) \in - \Div \Big( V(t,\mu(t)) \mu(t) \Big),
\end{equation}
when the set-valued map $V : [0,T] \times \Pcal_c(\R^d) \rightrightarrows C^0(\R^d,\R^d)$ is defined by
\begin{equation}
\label{eq:SetValued_Def}
V (t,\mu) := \Big\{ \vb \in C^0(\R^d,\R^d) ~\text{s.t.}~ \vb(\cdot) = v(t,\mu,u,\cdot)~ \text{for some $u \in U$} \Big\},
\end{equation}
for all $(t,\mu) \in [0,T] \times \Pcal_c(\R^d)$.

\begin{Def}[Admissible pairs and strong local minimisers for $(\Ppazo_{\MC})$]
We say that $(\mu(\cdot),u(\cdot))$ is an \textnormal{admissible trajectory-control pair} for $(\Ppazo_{\MC})$ if $u(\cdot) \in \U$ and $\mu(\cdot) \in \AC([0,T],\Pcal_c(\R^d))$ is a solution of the controlled non-local continuity equation \eqref{eq:NonLocalBis} satisfying $\mu(0) = \mu^0$ and $\mu(T) \in \Qpazo_T$. Moreover, an admissible pair $(\mu^*(\cdot),u^*(\cdot))$ is a \textnormal{strong local minimiser} for $(\Ppazo_{\MC})$ if there exists $\epsilon > 0$ such that
\begin{equation*}
\varphi(\mu^*(T)) \leq \varphi(\mu(T)),
\end{equation*}
for every other admissible pair $(\mu(\cdot),u(\cdot))$ which satisfies $\sup_{t \in [0,T]} W_1(\mu^*(t),\mu(t)) \leq \epsilon$.
\end{Def}

\begin{rmk}[Strong local $W_p$-minimisers]
Observe that if $\mu,\nu \in \Pcal(B(0,R))$ for some $R > 0$, it holds that $W_1(\mu,\nu) \leq W_p(\mu,\nu) \leq (2R)^{(p-1)/p} W_1(\mu,\nu)^{1/p}$ for every $p \in [1,+\infty)$. Hence, an admissible pair is a strong local $W_p$-minimiser if and only if it is a strong local $W_1$-minimiser. 
\end{rmk}

\begin{rmk}[On the existence of optimal trajectory-control pairs]
In \cite[Remark 4 and Theorem 7]{ContInc}, an existence result is provided for a more general variant of problem $(\Ppazo_{\MC})$ in the presence of running and final-point constraints, under a set of hypotheses which are a natural adaptation of \ref{hyp:MCP}. The only additional requirement needed to obtain this existence result compared to necessary optimality conditions is the convexity of the sets of admissible velocities $V(t,\mu) \subset C^0(\R^d,\R^d)$, which is instrumental in showing that the solution set of the corresponding inclusion is compact in the topology of the uniform convergence (see \cite[Theorem 6]{ContInc}). We would like to stress that in \cite[Theorem 7]{ContInc}, it is also assumed that $u \in U \mapsto v(t,\mu,u,x)$ is Lipschitz continuous. However, this hypothesis is only needed when the controls are closed-loop, and it follows from our proof therein that the usual continuity assumption is sufficient when the controls are open-loop, see \cite[Remark 5]{ContInc}. 
\end{rmk}

Let $r > 0$ be such that $\mu^0 \in \Pcal(B(0,r))$, and observe that under hypotheses \ref{hyp:MCP} the non-local velocity-fields $(t,\mu,x) \in [0,T] \times \Pcal_c(\R^d) \times \R^d \mapsto v(t,\mu,u,x)$ satisfy hypotheses \ref{hyp:CE} with constants which are uniform with respect to $u \in U$. Therefore by Theorem \ref{thm:ExistenceWass}, there exists a radius $R_r > 0$ depending only on the magnitudes of $r,\Norm{m(\cdot)}_1$ such that every admissible pair $(\mu(\cdot),u(\cdot))$ verifies
\begin{equation*}
\supp(\mu(t)) \subset K := B(0,R_r),
\end{equation*}
for all times $t \in [0,T]$. In the sequel, we will denote by $(\Phi^{u^*}_{(s,t)}(\cdot))_{s,t \in [0,T]}$ the semigroup of non-local flows which represent the optimal curve $\mu^*(\cdot)$ via \eqref{eq:ExpressionMeasure} and \eqref{eq:FlowExp1}.

We are now ready to state and prove our main result, which is a PMP for $(\Ppazo_{\MC})$. In what follows, we denote by $\J_{2d}$ the \textit{symplectic matrix} of $\R^{2d}$, i.e.
\begin{equation*}
\J_{2d} = \begin{pmatrix} 0 && \Id \\ - \Id && 0 \end{pmatrix},
\end{equation*}
and by $\H : [0,T] \times \Pcal_c(\R^{2d}) \times U \rightarrow \R$ the \textit{Hamiltonian} of the control problem, defined by
\begin{equation}
\label{eq:Thm_HamiltonianDef}
\H(t,\nu,u) = \INTDom{\big\langle r , v(t,\pi^1_{\#} \nu,u,x) \big\rangle}{\R^{2d}}{\nu(x,r)},
\end{equation}
for all $(t,\nu,u) \in [0,T] \times \Pcal_c(\R^{2d}) \times U$. We also consider the set of \textit{active indices} at $\mu^*(T)$, given by
\begin{equation*}
I^{\circ}(\mu^*(T)) := \Big\{ i \in \{1,\dots,n\} ~\text{s.t.}~ \mu^*(T) \in \partial \Qpazo_T^i \Big\},
\end{equation*}
where we introduced the sets $\Qpazo_T^i := \big\{ \mu \in \Pcal_2(\R^d) ~\text{s.t.}~ \Psi_i(\mu) \leq 0 \big\}$ for every $i \in \{1,\dots,n \}$.

\begin{thm}[Pontryagin Maximum Principle for $(\Ppazo_{\MC})$]
\label{thm:PMPMayer}
Let $(\mu^*(\cdot),u^*(\cdot)) \in \AC([0,T],\Pcal_c(\R^d)) \times \U$ be a strong local minimiser for $(\Ppazo_{\MC})$, and suppose that hypotheses \textnormal{\ref{hyp:MCP}} hold.

Then, there exist $R_r' > 0$, non-trivial Lagrange multipliers $(\lambda_0,\lambda_1,\dots,\lambda_n) \in \{ 0,1\} \times \R_+^n$ and a curve of measures $\nu^*(\cdot) \in \AC([0,T],\Pcal(K' \times K'))$ with $K' := B(0,R_r')$ such that the following holds.
\begin{enumerate}
\item[$(i)$] The curve $\nu^*(\cdot)$ solves the \textnormal{forward-backward Hamiltonian} continuity equation
\begin{equation}
\label{eq:Thm_HamiltonianSys}
\left\{
\begin{aligned}
& \partial_t \nu^*(t) + \Div \Big( \J_{2d} \nabla_{\nu} \H(t,\nu^*(t),u^*(t)) \nu^*(t) \Big) = 0, \\
& \pi^1_{\#} \nu^*(t) = \mu^*(t) \hspace{1.5cm} \text{for all times $t \in [0,T]$}, \\
& \nu^*(T) = \bigg( \Id \, , \, \Big( - \lambda_0 \nabla \varphi(\mu^*(T)) - \sum_{i=1}^n \lambda_i \nabla \Psi_i(\mu^*(T)) \Big) \bigg)_{\raisebox{6pt}{$\scriptstyle \#$}} \mu^*(T),
\end{aligned}
\right.
\end{equation}
where the Wasserstein gradient of the Hamiltonian is given explicitly by 
\begin{equation}
\label{eq:PMP_HamiltonianGrad}
\begin{aligned}
& \nabla_{\nu} \H(t,\nu^*(t),u^*(t))(x,r) \\
& \hspace{1cm} = \begin{pmatrix}
\, \D_x v \big( t,\mu^*(t),u^*(t),x \big)^{\top} r + \INTDom{\D_{\mu} v \big( t,\mu^*(t),u^*(t),y \big)(x)^{\top} p \,}{\R^{2d}}{\nu^*(t)(y,p)} \\ \\
v \big( t,\mu^*(t),u^*(t),x \big)
\end{pmatrix},
\end{aligned}
\end{equation}
for $\Lcal^1$-almost every $t \in [0,T]$ and any $(x,r) \in \R^{2d}$.
\item[$(ii)$] The \textnormal{complementarity slackness} conditions 
\begin{equation}
\label{eq:PMP_Complementarity}
\lambda_i \Psi_i(\mu^*(T)) = 0, 
\end{equation}
hold for every index $i \in \{1,\dots,n\}$. 
\item[$(iii)$] The \textnormal{Pontryagin maximisation condition}
\begin{equation}
\label{eq:Thm_Maximisation}
\H(t,\nu^*(t),u^*(t)) = \max_{u \in U} \, \H(t,\nu^*(t),u),
\end{equation}
holds for $\Lcal^1$-almost every $t \in [0,T]$.
\end{enumerate}
Moreover, if there exists a map $t \in [0,T] \mapsto w(t) \in T_{\co V(t,\mu^*(t))} \big( v(t,\mu^*(t),u^*(t) \big)$ satisfying hypotheses \ref{hyp:CE} such that the corresponding solution $\F \in C^0([0,T]\times K,\R^d)$ of \eqref{eq:Linearised_CauchyNonLocal} with $\F^0 = 0$ satisfies
\begin{equation*}
\langle \nabla \Psi_i(\mu^*(T)) , \F \big( T,\Phi^{u^*}_{(T,0)}(\cdot) \big) \rangle_{L^2(\mu^*(T))} < 0,
\end{equation*}
for every $i \in I^{\circ}(\mu^*(T))$, then the PMP is normal, i.e. $\lambda_0 = 1$.
\end{thm}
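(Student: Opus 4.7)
The plan is to implement the geometric strategy advertised in the introduction: reduce the control problem to the equivalent continuity inclusion, identify an informative cone of ``tangent'' perturbations of the optimal trajectory via the variational linearisation of Theorem \ref{thm:Variational_Inclusion}, apply a finite-dimensional separation of convex sets, and encode the resulting inequality through an adequately chosen costate. By Theorem \ref{thm:ControlInc}, admissible trajectories of $(\Ppazo_{\MC})$ coincide with solutions of the continuity inclusion \eqref{eq:DiffBis} driven by the multifunction $V(\cdot,\cdot)$ in \eqref{eq:SetValued_Def}. For every Carath\'eodory vector field $w : [0,T] \times \R^d \to \R^d$ satisfying hypotheses \ref{hyp:CE} together with the pointwise inclusion $w(t) \in T_{\co V(t,\mu^*(t))}(v(t,\mu^*(t),u^*(t)))$, I would invoke Theorem \ref{thm:Variational_Inclusion} with $\F^0 \equiv 0$ (since $\mu(0)=\mu^0$ is prescribed) to obtain, for every $\epsilon \in [0,1]$, an admissible trajectory $\tilde\mu_\epsilon(\cdot)$ of the inclusion starting at $\mu^0$ and satisfying $W_1(\tilde\mu_\epsilon(T), \G_\epsilon(T,\cdot)_\# \mu^*(T)) = o(\epsilon)$. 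Its first-order correction is encoded by the solution $\F \in C^0([0,T]\times K,\R^d)$ of the intrinsic linearised problem \eqref{eq:Linearised_CauchyNonLocal}.

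Applying the vector-valued chain rule of Corollary \ref{cor:ChainruleBis} to $\varphi$ and to each $\Psi_i$ jointly with the Taylor expansion \eqref{eq:TaylorMeasure} yields
\begin{equation*}
\varphi(\tilde\mu_\epsilon(T)) = \varphi(\mu^*(T)) + \epsilon \bigl\langle \nabla \varphi(\mu^*(T)) , \F(T,\Phi^{u^*}_{(T,0)}(\cdot)) \bigr\rangle_{L^2(\mu^*(T))} + o(\epsilon),
\end{equation*}
and an analogous expansion for $\Psi_i(\tilde\mu_\epsilon(T))$. Strong local optimality precludes the existence of an admissible $w(\cdot)$ for which the vector of $L^2(\mu^*(T))$-pairings against $\nabla \varphi(\mu^*(T))$ and $\nabla \Psi_i(\mu^*(T))$ for $i \in I^\circ(\mu^*(T))$ is componentwise strictly negative; otherwise, for $\epsilon > 0$ small, the perturbed trajectory would remain admissible (inactive constraints stay negative by continuity, active ones strictly decrease) while reaching a strictly smaller cost. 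Since the attainable vector of pairings forms a convex cone in $\R^{1+|I^\circ(\mu^*(T))|}$, a finite-dimensional Hahn-Banach separation produces nontrivial multipliers $\lambda_0 \geq 0$ and $(\lambda_i)_{i \in I^\circ(\mu^*(T))} \subset \R_+$ such that
\begin{equation*}
\lambda_0 \bigl\langle \nabla \varphi(\mu^*(T)) , \F(T,\Phi^{u^*}_{(T,0)}(\cdot)) \bigr\rangle_{L^2(\mu^*(T))} + \sum_{i \in I^\circ(\mu^*(T))} \lambda_i \bigl\langle \nabla \Psi_i(\mu^*(T)) , \F(T,\Phi^{u^*}_{(T,0)}(\cdot)) \bigr\rangle_{L^2(\mu^*(T))} \geq 0,
\end{equation*}
for every tangent direction $\F$. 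Extending by $\lambda_i := 0$ for $i \notin I^\circ(\mu^*(T))$ produces \eqref{eq:PMP_Complementarity}, and rescaling forces $\lambda_0 \in \{0,1\}$.

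One would then introduce the costate $\nu^*(\cdot) \in \AC([0,T],\Pcal_c(\R^{2d}))$ as the unique solution of the backward Hamiltonian continuity equation \eqref{eq:Thm_HamiltonianSys} with the prescribed terminal datum. The explicit form of $\nabla_\nu \H$ in \eqref{eq:PMP_HamiltonianGrad} is, by design, the transposed counterpart of the right-hand side of \eqref{eq:Linearised_CauchyNonLocal}: its local piece is the pointwise transpose of $\D_x v$, while its non-local piece dualises the integral term $\int \D_\mu v(t,\mu^*(t),u^*(t),\cdot)(y)\F(t,y) d\mu^0(y)$ via Fubini. This adjoint structure ensures that, with $\F(0,\cdot) \equiv 0$, the pairing $t \mapsto \int_{\R^{2d}} \langle r , \F(t,\Phi^{u^*}_{(t,0)}(x))\rangle d\nu^*(t)(x,r)$ evolves only under the influence of the source $w(\cdot)$, producing after integration
\begin{equation*}
\int_{\R^{2d}} \bigl\langle r , \F(T,\Phi^{u^*}_{(T,0)}(x)) \bigr\rangle d\nu^*(T)(x,r) = \int_0^T \int_{\R^{2d}} \bigl\langle r , w(t,x) \bigr\rangle d\nu^*(t)(x,r) dt.
\end{equation*}
The terminal condition of \eqref{eq:Thm_HamiltonianSys} identifies the left-hand side with the negative of the quantity appearing in the separation inequality, whence $\int_0^T \int \langle r , w(t,x) \rangle d\nu^*(t)(x,r) dt \leq 0$ for every admissible $w(\cdot)$ in the tangent cone. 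Specialising to $w(t,x) := v(t,\mu^*(t),u,x) - v(t,\mu^*(t),u^*(t),x)$ for arbitrary $u \in U$ -- which belongs to $T_{\co V(t,\mu^*(t))}(v(t,\mu^*(t),u^*(t)))$ by convexity of $\co V(t,\mu^*(t))$ -- and invoking a standard measurable-selection/Lebesgue-point argument together with the continuity of $u \mapsto v(t,\mu^*(t),u,\cdot)$, one recovers \eqref{eq:Thm_Maximisation}. The normality claim follows at once: if $\lambda_0=0$, complementarity reduces the separation inequality tested on the distinguished direction of the hypothesis to $\sum_{i \in I^\circ(\mu^*(T))} \lambda_i \langle \nabla \Psi_i(\mu^*(T)), \F(T,\Phi^{u^*}_{(T,0)}(\cdot)) \rangle_{L^2(\mu^*(T))} \geq 0$, which contradicts strict negativity unless all $\lambda_i$ vanish, in turn violating non-triviality.

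The main obstacle is the adjoint duality computation underlying the third step: one must establish well-posedness of the backward Hamiltonian system \eqref{eq:Thm_HamiltonianSys} -- whose right-hand side is itself non-local and depends on $\nu^*(t)$ through both its transport and its momentum components -- together with a Gr\"onwall-type bound confining $\supp(\nu^*(t))$ to a ball $K' \times K'$, and then rigorously integrate by parts to obtain the pairing identity with the linearised tangent $\F$. This is the genuine Wasserstein analogue of the classical ODE relation $\tfrac{d}{dt}\langle p,y\rangle = \langle p,b\rangle$, but relies crucially on the differential tools of Section \ref{section:NonSmoothWass}, most notably Proposition \ref{prop:StrongSubdiff}, to legitimately differentiate the relevant functionals along the non-optimal transport plans that arise when pushing $\nu^*(t)$ through the Hamiltonian flow.
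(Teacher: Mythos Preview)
Your proposal is essentially the paper's own proof: variational linearisation via Theorem \ref{thm:Variational_Inclusion}, finite-dimensional separation of the cone of attainable pairings from the negative orthant, and an adjoint costate encoding the resulting inequality. A few organizational points are worth noting. The paper splits the separation into two cases by first asking whether the projection $\Bpazo_T \subset \R^{|I^\circ|}$ onto the constraint-gradient directions already misses the negative orthant (Step 2, yielding $\lambda_0=0$ directly), and only otherwise invokes Lemma \ref{lem:Intersection} to separate in $\R^{1+|I^\circ|}$ (Step 3, forcing $\lambda_0=1$); your single separation followed by a contrapositive for normality is equivalent. The paper also dispatches the degenerate cases $\nabla\varphi(\mu^*(T))=0$ or $\nabla\Psi_i(\mu^*(T))=0$ for some active $i$ up front by taking $\nu^*(t)=\mu^*(t)\times\delta_0$. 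Regarding the obstacle you rightly flag: the paper does not argue abstract well-posedness of \eqref{eq:Thm_HamiltonianSys} (indeed the Hamiltonian field fails \ref{hyp:CE}), but exploits in Lemma \ref{lem:HamiltonianConstruction} the cascaded structure---the first marginal is a priori $\mu^*(t)$---to write an explicit backward \emph{linear} ODE for the costate fibres $\Psi^x_{(T,t)}$ and build $\nu^*(t)$ by disintegration; the support bound and the duality identity (Lemma \ref{lem:AuxQuant}) then follow from direct ODE computations. Finally, the maximisation condition is obtained not from constant $u\in U$ but from measurable selections $\tilde u(t)\in\tilde U_m(t)$ on sets of positive measure, exactly the ``measurable-selection'' step you allude to.
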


We split the proof of Theorem \ref{thm:PMPMayer} into four steps. In Step 1, we introduce suitable variational linearisations of the controlled non-local continuity equation inspired by Theorem \ref{thm:Variational_Inclusion}. In Step 2, we focus on the situation in which the end-points of the trajectories of the linearised system do not satisfy the constraint qualification condition, which leads to the \textit{abnormal} PMP, i.e. the case $\lambda_0 = 0$. In Step 3, we analyse the converse scenario and prove that it corresponds to the \textit{normal} PMP, i.e. the case $\lambda_0 = 1$. Finally in Step 4, we build a state-costate curve $\nu^*(\cdot)$ solution of the Hamiltonian flow \eqref{eq:Thm_HamiltonianSys}, along which the maximum principle \eqref{eq:Thm_Maximisation} holds.


\paragraph*{Step 1: Variational linearisations along $(\mu^*(\cdot),u^*(\cdot))$.} Observe that $v(t,\mu,u^*(t)) \in \co V(t,\mu)$ for $\Lcal^1$-almost every $t \in [0,T]$ and all $\mu \in \Pcal_c(\R^d)$, where the set-valued map $V : [0,T] \times \Pcal_c(\R^d) \rightrightarrows C^0(\R^d,\R^d)$ is given by \eqref{eq:SetValued_Def}, and satisfies hypotheses \ref{hyp:DI} as a consequence of \ref{hyp:MCP}-$(i)$. Let now
\begin{equation}
\label{eq:w_def}
t \in [0,T] \mapsto w(t) \in T_{\co V(t,\mu^*(t))} \big( v(t,\mu^*(t),u^*(t)) \big),
\end{equation}
be any $\Lcal^1$-measurable selection satisfying hypotheses \ref{hyp:CE}, where $T_{\co V(t,\mu^*(t))} \big(v(t,\mu^*(t),u^*(t) \big)$ is to be understood for $\Lcal^1$-almost every $t \in [0,T]$ in the sense of Definition \ref{def:AdjacentCone_C0}. Since the non-local velocity-field $(t,\mu,x) \mapsto v(t,\mu,u^*(t),x)$ satisfies hypotheses \ref{hyp:H}, we can apply Theorem \ref{thm:Variational_Inclusion} to obtain the existence of a solution $\tilde{\mu}_{\epsilon}(\cdot)$ of \eqref{eq:DiffBis} with $\tilde{\mu}_{\epsilon}(0) = \mu^0$, such that
\begin{equation}
\label{eq:DisEst_Step1}
\sup_{t \in [0,T]} W_1 \Big( \tilde{\mu}_{\epsilon}(t) , \big( \Id + \epsilon \F \big( t , \Phi^{u^*}_{(t,0)}(\cdot) \big) + o_t(\epsilon) \big)_{\#} \mu^*(t) \Big) = o(\epsilon),
\end{equation}
for any $\epsilon > 0$ and all times $t \in [0,T]$, where $\sup_{t \in [0,T]} \NormC{o_t(\epsilon)}{0}{K,\R^d} = o(\epsilon)$. Here, the map $\F \in C^0([0,T] \times K,\R^d)$ is the unique solution of \eqref{eq:Linearised_CauchyNonLocal} with $\F^0 = 0$ and $w(\cdot)$ satisfying \eqref{eq:w_def} and \ref{hyp:CE}. From now on, we will denote by $\Rpazo^L_T \subset C^0(\R^d,\R^d)$ the \textit{reachable set at time $T$} of \eqref{eq:Linearised_CauchyNonLocal} with $\F^0 = 0$, where the non-local velocity-field
\begin{equation*}
(t,x) \in [0,T] \times \R^d \mapsto v(t,\mu(t),\Phi_{(0,t)}[\mu^0](x)) \in \R^d,
\end{equation*}
is replaced by the controlled non-local vector-field
\begin{equation*}
(t,x) \in [0,T] \times \R^d \mapsto v(t,\mu^*(t),u^*(t),\Phi^{u^*}_{(0,t)}(x)) \in \R^d,
\end{equation*}
namely
\begin{equation*}
\begin{aligned}
\Rpazo_T^L := \bigg\{ \G_T \in C^0(\R^d,\R^d) ~\text{s.t.}~ \G_T(\cdot) = \F \big( T , \Phi_{(T,0)}^{u^*}(\cdot) \big) ~\text{where $\F(\cdot,\cdot)$ solves \eqref{eq:Linearised_CauchyNonLocal} with $\F^0 = 0$} & \\
\text{and $w(\cdot)$ satisfying \eqref{eq:w_def} and hypotheses \ref{hyp:CE}} \, & \bigg\} .
\end{aligned}
\end{equation*}

Up to relabelling the functionals $\{ \Psi_i(\cdot)\}_{i=1}^n$, we can suppose without loss of generality that $I^{\circ}(\mu^*(T)) = \{1,\dots,k\}$ for some $k \leq n$, whenever it is non-empty. In addition, observe that if $\nabla \Psi_i(\mu^*(T)) = 0$ for some $i \in I^{\circ}(\mu^*(T))$, then the statements of the PMP are verified with $\lambda_i = 1$,  $\lambda_j = 0$ for every $j \in \{0,\dots,n \} \backslash \{i\}$ and $\nu^*(t) := \mu^*(t) \times \delta_0$ for all times $t \in [0,T]$. In this case, one has
\begin{equation*}
\H(t,\nu^*(t),u) \equiv 0, \quad \nabla_{\nu} \H(t,\nu^*(t),u^*(t))(x,r) = \begin{pmatrix}
0 \\ v(t,\mu^*(t),u^*(t),x)
\end{pmatrix} 
\quad \text{and} \quad \nu^*(T) = \mu^*(T) \times \delta_0, 
\end{equation*}
for $\Lcal^1$-almost every $t \in [0,T]$, all $u \in U$ and $\nu^*(t)$-almost every $(x,r) \in \R^d$, and it can then be checked that the curve $\nu^*(\cdot)$ defined above trivially satisfies the statements of Theorem \ref{thm:PMPMayer}. Similarly if $\nabla \varphi(\mu^*(T)) = 0$, we can set $\lambda_0 = 1$ and $\lambda_i =0$ for all $i \in \{1,\dots,n\}$, so that the PMP is again satisfied with $\nu^*(t) = \mu^*(t) \times \delta_0$. Hence, there only remains to consider the case in which $\nabla \varphi(\mu^*(T)) \neq 0$ and $\nabla \Psi_i(\mu^*(T)) \neq 0$ for all $i \in I^{\circ}(\mu^*(T))$. 

If $I^{\circ}(\mu^*(T)) \neq \emptyset$, we introduce the non-empty subset of $\R^k$ defined by
\begin{equation}
\label{eq:BT_Def}
\Bpazo_T := \bigg\{ \Big( \langle \nabla \Psi_1(\mu^*(T)),\G_T \rangle_{L^2(\mu^*(T))},\dots,\langle \nabla \Psi_k(\mu^*(T)),\G_T \rangle_{L^2(\mu^*(T))} \Big) ~\text{s.t.}~ \G_T \in \Rpazo_T^L \bigg\},
\end{equation}
which allows to discriminate between the two scenarios corresponding to the abnormal and normal versions of the maximum principle.


\paragraph*{Step 2: Separation without constraint qualification.} In this case, we suppose that
\begin{equation*}
\Bpazo_T \cap (\R_-^*)^k \, := \, \Bpazo_T \cap (-\infty,0)^k = \emptyset.
\end{equation*}
Notice that $\Bpazo_T$ is a convex subset of $\R^k$ since $\Rpazo_T^L \subset C^0(\R^d,\R^d)$ is convex. Because $(\R_-^*)^k$ is convex as well, there exists by the separation theorem a non-trivial element $p \in \R^k$ such that
\begin{equation}
\label{eq:Abnormal_Sep}
\sup_{d \in (\R_-^*)^k} \big\langle p , d \, \big\rangle \leq \inf_{b \in \Bpazo_T} \big\langle p , b \big\rangle.
\end{equation}
Remark now that since $(\R_-^*)^k$ is a cone, the separation inequality \eqref{eq:Abnormal_Sep} necessarily implies
\begin{equation*}
\sup_{d \in \R_-^k} \langle p , d \, \rangle \leq 0,
\end{equation*}
which yields that $p := (\lambda_1,\dots,\lambda_k) \in \R_+^k$. Similarly, one can check that $\Bpazo_T$ is a cone upon remarking that $\Rpazo_L^T \subset C^0(\R^d,\R^d)$ is itself a cone. Thus, \eqref{eq:Abnormal_Sep} also implies
\begin{equation*}
\inf_{b \in \Bpazo_T} \big\langle p , b \big\rangle \geq 0,
\end{equation*}
which combined with the definition \eqref{eq:BT_Def} of $\Bpazo_T$ further yields
\begin{equation*}
\sum_{i=1}^k \lambda_i \big\langle \nabla \Psi_i(\mu^*(T)),\G_T \big \rangle_{L^2(\mu^*(T))} ~\geq~ 0,
\end{equation*}
for every element $\G_T \in \Rpazo_T^L$. This inequality can be in turn rewritten as
\begin{equation}
\label{eq:Abnormal_Ineq}
\big\langle \hspace{-0.1cm} -P_T , \F \big( T , \Phi^{u^*}_{(T,0)}(\cdot) \big) \big\rangle_{L^2(\mu^*(T))} \leq 0,
\end{equation}
for every solution $\F \in C^0([0,T] \times K,\R^d)$ of \eqref{eq:Linearised_CauchyNonLocal} with  $\F^0 = 0$ and $w(\cdot)$ satisfying \eqref{eq:w_def} and \ref{hyp:CE}, where the covector $P_T \in C^0(K,\R^d)$ is defined by
\begin{equation}
\label{eq:Abnormal_pT}
P_T := \sum_{i=1}^k \lambda_i \nabla \Psi_i(\mu^*(T)).
\end{equation}


\paragraph*{Step 3: Separation with constraint qualification.} We now investigate the scenario in which
\begin{equation*}
\Bpazo_T \cap (\R_-^*)^k \neq \emptyset.
\end{equation*}
In this context, consider the set $\A_T \subset \R^{k+1}$ defined by
\begin{equation*}
\begin{aligned}
\A_T := \bigg\{ \Big( \langle \nabla \varphi(\mu^*(T)) , \G_T \big\rangle_{L^2(\mu^*(T))}, & \langle \nabla \Psi_1(\mu^*(T)),\G_T \rangle_{L^2(\mu^*(T))}, \dots, \\
& \langle \nabla \Psi_k(\mu^*(T)),\G_T \rangle_{L^2(\mu^*(T))} \Big) ~\text{s.t.}~ \G_T \in \Rpazo_T^L \bigg\}.
\end{aligned}
\end{equation*}
In the following lemma, we prove that the set $\A_T$ defined above is disjoint from $(\R_-^*)^{k+1}$.

\begin{lem}[Incompatible intersection]
\label{lem:Intersection}
Let $(\mu^*(\cdot),u^*(\cdot))$ be a strong local minimiser for $(\Ppazo_{\MC})$. Then, it necessarily holds that
\begin{equation}
\label{eq:EmptyInter}
\A_T \cap (\R_-^*)^{k+1} = \emptyset.
\end{equation}
\end{lem}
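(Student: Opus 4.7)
The plan is to argue by contradiction: suppose there exists some $\G_T \in \Rpazo_T^L$ yielding strictly negative inner products $\langle \nabla \varphi(\mu^*(T)), \G_T \rangle_{L^2(\mu^*(T))}$ and $\langle \nabla \Psi_i(\mu^*(T)), \G_T \rangle_{L^2(\mu^*(T))}$ for every active index $i \in I^{\circ}(\mu^*(T)) = \{1,\dots,k\}$. By the very definition of $\Rpazo_T^L$, we have $\G_T(\cdot) = \F(T, \Phi^{u^*}_{(T,0)}(\cdot))$ for some admissible tangent perturbation $w(\cdot)$ satisfying \eqref{eq:w_def} and hypotheses \ref{hyp:CE}, where $\F \in C^0([0,T] \times K, \R^d)$ solves \eqref{eq:Linearised_CauchyNonLocal} with $\F^0 = 0$. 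Apply Theorem \ref{thm:Variational_Inclusion} to produce, for each small $\epsilon \geq 0$, a solution $\tilde{\mu}_{\epsilon}(\cdot)$ of the continuity inclusion \eqref{eq:DiffBis} starting at $\mu^0$ and satisfying
$$\sup_{t \in [0,T]} W_1 \Big( \tilde{\mu}_{\epsilon}(t), \big( \Id + \epsilon \F(t, \Phi^{u^*}_{(t,0)}(\cdot)) + o_t(\epsilon) \big)_{\#} \mu^*(t) \Big) = o(\epsilon),$$
and the correspondence between control systems and continuity inclusions (Theorem \ref{thm:ControlInc}) provides an admissible open-loop control $\tilde{u}_{\epsilon}(\cdot) \in \U$ generating $\tilde{\mu}_{\epsilon}(\cdot)$.

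The next step is to first-order expand $\varphi$ and each $\Psi_i$ at $\tilde{\mu}_{\epsilon}(T)$. By Corollary \ref{cor:ChainruleBis} applied to any locally differentiable $\phi \in \{\varphi, \Psi_1, \dots, \Psi_n\}$ and to the idealised pushforward $(\Id + \epsilon \G_T)_{\#} \mu^*(T)$, one obtains
$$\phi \big( (\Id + \epsilon \G_T)_{\#} \mu^*(T) \big) = \phi(\mu^*(T)) + \epsilon \langle \nabla \phi(\mu^*(T)), \G_T \rangle_{L^2(\mu^*(T))} + o(\epsilon).$$
The Lipschitz continuity of $\phi$ in the $W_1$-metric over $\Pcal(K')$ granted by \ref{hyp:MCP}-$(ii)$ then absorbs the discrepancy between this idealised pushforward and the actual $\tilde{\mu}_{\epsilon}(T)$, which is controlled jointly by the $\|o_T(\epsilon)\|_{C^0}$ term arising from the flow expansion of Proposition \ref{prop:Linearisation_Cauchy} and the $o(\epsilon)$ Filippov-type remainder of Theorem \ref{thm:Variational_Inclusion}. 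This produces the required uniform expansion
$$\phi(\tilde{\mu}_{\epsilon}(T)) = \phi(\mu^*(T)) + \epsilon \langle \nabla \phi(\mu^*(T)), \G_T \rangle_{L^2(\mu^*(T))} + o(\epsilon).$$

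With this expansion in hand, the contradiction is immediate. The strict negativity assumption ensures $\Psi_i(\tilde{\mu}_{\epsilon}(T)) < 0$ for each $i \in \{1,\dots,k\}$ when $\epsilon > 0$ is sufficiently small, while the inactive constraints $j \notin I^{\circ}(\mu^*(T))$ remain satisfied by continuity since $\Psi_j(\mu^*(T)) < 0$. Hence $\tilde{\mu}_{\epsilon}(T) \in \Qpazo_T$, so that $(\tilde{\mu}_{\epsilon}(\cdot), \tilde{u}_{\epsilon}(\cdot))$ is an admissible pair for $(\Ppazo_{\MC})$. Moreover $\sup_{t \in [0,T]} W_1(\tilde{\mu}_{\epsilon}(t), \mu^*(t)) = O(\epsilon)$, which places this competitor inside the strong-local-minimality neighbourhood of $(\mu^*(\cdot), u^*(\cdot))$ for $\epsilon$ small. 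Yet the expansion applied to $\phi = \varphi$ yields $\varphi(\tilde{\mu}_{\epsilon}(T)) < \varphi(\mu^*(T))$, contradicting the optimality of $(\mu^*(\cdot), u^*(\cdot))$.

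The main technical obstacle I anticipate lies in the expansion step: two distinct approximation layers must be simultaneously absorbed into the single $o(\epsilon)$ remainder, namely the $o_t(\epsilon)$ error inside the perturbed flow representation from Proposition \ref{prop:Linearisation_Cauchy} and the $o(\epsilon)$ $W_1$-discrepancy inherent to Theorem \ref{thm:Variational_Inclusion}. Crucially, the chain rule of Corollary \ref{cor:ChainruleBis} only gives the derivative of $\phi$ along the clean pushforward $(\Id + \epsilon \G_T)_{\#} \mu^*(T)$, so one genuinely needs the global Lipschitz control of $\phi$ in $W_1$ (not just the metric-subdifferential inequality along optimal displacements) to transfer this expansion to the non-explicit trajectory $\tilde{\mu}_{\epsilon}(T)$. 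This is precisely why the hypothesis \ref{hyp:MCP}-$(ii)$ couples local differentiability with $W_1$-Lipschitz continuity.
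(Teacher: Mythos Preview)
Your proposal is correct and follows essentially the same route as the paper's own proof: contradiction via Theorem \ref{thm:Variational_Inclusion} and Theorem \ref{thm:ControlInc} to produce an admissible competitor $(\tilde{\mu}_{\epsilon}(\cdot),\tilde{u}_{\epsilon}(\cdot))$, then first-order expansion of $\varphi$ and the $\Psi_i$ via Corollary \ref{cor:ChainruleBis} combined with the $W_1$-Lipschitz continuity from \ref{hyp:MCP}-$(ii)$ to conclude strict feasibility and strict cost decrease. The only cosmetic difference is that the paper absorbs the $o_T(\epsilon)$ flow-expansion error directly inside the pushforward before invoking the chain rule, whereas you apply the chain rule to the clean pushforward $(\Id+\epsilon\G_T)_{\#}\mu^*(T)$ and then handle both error layers through Lipschitz continuity; the two orderings are equivalent.
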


\begin{proof}
Suppose by contradiction that the intersection in \eqref{eq:EmptyInter} is non-empty. Then, there exists $w(\cdot)$ satisfying \eqref{eq:w_def} and \ref{hyp:CE} such that the solution $\F \in C^0([0,T] \times K,\R^d)$ of \eqref{eq:Linearised_CauchyNonLocal} with $\F^0 = 0$ verifies
\begin{equation}
\label{eq:Contradiction_CostIneq}
\big\langle \nabla \varphi(\mu^*(T)) , \F \big( T , \Phi^{u^*}_{(T,0)}(\cdot) \big) \big\rangle_{L^2(\mu^*(T))} < 0,
\end{equation}
and
\begin{equation}
\label{eq:Contradiction_ConstraintsIneq}
\big\langle \nabla \Psi_i(\mu^*(T)) , \F \big( T , \Phi^{u^*}_{(T,0)}(\cdot) \big) \big\rangle_{L^2(\mu^*(T))} < 0,
\end{equation}
for every $i \in \{ 1,\dots,k\}$. By Step 1, there exists a compact set $K' \subset \R^d$ such that for every $\epsilon > 0$ sufficiently small, there exists a curve $\tilde{\mu}_{\epsilon}(\cdot) \in \AC([0,T],\Pcal(K'))$ solution of \eqref{eq:DiffBis} which satisfies 
\begin{equation*}
\sup_{t \in [0,T]} W_1 \Big( \tilde{\mu}_{\epsilon}(t) , \big( \Id + \epsilon \F \big( t , \Phi^{u^*}_{(t,0)}(\cdot) \big) + o_t(\epsilon) \big)_{\#} \mu^*(t) \Big) = o(\epsilon),
\end{equation*}
where $\sup_{t \in [0,T]} \NormC{o_t(\epsilon)}{0}{K',\R^d} = o(\epsilon)$. Since the solution set of \eqref{eq:DiffBis} coincides with that of \eqref{eq:NonLocalBis}, there exists $\tilde{u}_{\epsilon}(\cdot) \in \U$ such that $(\tilde{\mu}_{\epsilon}(\cdot),\tilde{u}_{\epsilon}(\cdot))$ is a trajectory-control pair for \eqref{eq:NonLocalBis}. Moreover by hypotheses \ref{hyp:MCP}-$(ii)$, the maps $\varphi(\cdot)$ and $\{\Psi_i(\cdot)\}_{1 \leq i \leq n}$ are locally differentiable in the sense of Definition \ref{def:LocalDiff}. Therefore, it holds as a consequence of Corollary \ref{cor:ChainruleBis} together with \eqref{eq:Contradiction_CostIneq} that
\begin{equation}
\label{eq:TaylorMeasureIneq1}
\begin{aligned}
\varphi(\tilde{\mu}_{\epsilon}(T)) & = \varphi \Big( \big( \Id + \epsilon \F\big( T , \Phi_{(T,0)}^{u^*}(\cdot) \big)  + o_T(\epsilon) \big)_{\#} \mu^*(T) \Big) + o(\epsilon)  \\
& = \varphi(\mu^*(T)) + \epsilon \big\langle \nabla \varphi(\mu^*(T)) , \F \big( T , \Phi_{(T,0)}^{u^*}(\cdot) \big) \big\rangle_{L^2(\mu^*(T))} + o_T(\epsilon) \\
& < \varphi(\mu^*(T)),
\end{aligned}
\end{equation}
and analogously, by \eqref{eq:Contradiction_ConstraintsIneq},
\begin{equation}\\
\label{eq:TaylorMeasureIneq2}
\Psi_i(\tilde{\mu}_{\epsilon}(T)) < \Psi_i(\mu^*(T)),
\end{equation}
for every index $i \in \{ 1,\dots, k\}$ and any $\epsilon >0$ small enough, where we also used that fact that the maps $\varphi(\cdot)$ and $\{\Psi_i(\cdot)\}_{i=1}^n$ are Lipschitz continuous over $\Pcal(K')$ in the $W_1$-metric. Combining \eqref{eq:TaylorMeasureIneq1} and \eqref{eq:TaylorMeasureIneq2} while observing that $\Psi_i(\tilde{\mu}_{\epsilon}(T)) < 0$ as well for $i \notin I^{\circ}(\mu^*(T))$ and $\epsilon > 0$ small enough, we have thus built an admissible pair $(\tilde{\mu}_{\epsilon}(\cdot),\tilde{u}_{\epsilon}(\cdot))$ for $(\Ppazo_{\MC})$ that produces a cost strictly smaller than $(\mu^*(\cdot),u^*(\cdot))$, and which satisfies $\sup_{t \in [0,T]} W_1(\mu^*(t),\tilde{\mu}_{\epsilon}(t)) \leq C \epsilon$ for some constant $C > 0$ independent of $\epsilon > 0$. This contradicts our assumption that the pair $(\mu^*(\cdot),u^*(\cdot))$ is a strong local minimiser for $(\Ppazo_{\MC})$.
\end{proof}

As a consequence of Lemma \ref{lem:Intersection}, the intersection described in \eqref{eq:EmptyInter} is necessarily empty. Since both sets in the latter are convex, the separation theorem yields the existence of a non-trivial element $p := (\lambda_0,\dots,\lambda_k) \in \R^{k+1}$ such that
\begin{equation}
\label{eq:SeparationSystem}
\sup_{d \in (\R_-^*)^{k+1}} \langle p , d \, \rangle ~\leq~ \inf_{a \in \A_T} \langle p , a \rangle.
\end{equation}
By repeating the same arguments as in Step 2 above, we again have that $(\lambda_0,\dots,\lambda_k) \in \R_+^{k+1}$. In this case however, it necessarily holds that $\lambda_0 \neq 0$. Indeed if $\lambda_0 = 0$, then \eqref{eq:SeparationSystem} becomes equivalent to $\Bpazo_T \cap (\R_-^*)^k = \emptyset$ because $(\R_-^*)^k$ is open, which contradicts our standing assumption. Hence up to renormalising all the multipliers by $\lambda_0 > 0$, we recover
\begin{equation*}
\big\langle \nabla \varphi(\mu^*(T)), \G_T \big\rangle_{L^2(\mu^*(T))} + \sum_{i=1}^k \lambda_i \big\langle \nabla \Psi_i(\mu^*(T)),\G_T \big \rangle_{L^2(\mu^*(T))} \geq 0,
\end{equation*}
for every $\G_T \in \Rpazo_T^L$. The latter expression can be equivalently rewritten as
\begin{equation*}
\label{eq:Normal_Ineq}
\big\langle \hspace{-0.1cm} -P_T , \F \big( T , \Phi^{u^*}_{(T,0)}(\cdot) \big) \big\rangle_{L^2(\mu^*(T))} \leq 0,
\end{equation*}
where $P_T \in C^0(K,\R^d)$ is given in this context by
\begin{equation}
\label{eq:Normal_pT}
P_T := \nabla \varphi(\mu^*(T)) + \sum_{i=1}^k \lambda_i \nabla \Psi_i(\mu^*(T)).
\end{equation}
Observe now that if $I^{\circ}(\mu^*(T)) = \emptyset$, we can repeat the same arguments to obtain 
\begin{equation*}
\big\langle \nabla \varphi(\mu^*(T)), \G_T \big\rangle_{L^2(\mu^*(T))} \geq 0,
\end{equation*}
for any $\G_T \in \Rpazo_T^L$, and set $P_T := \nabla \varphi(\mu^*(T))$. 


\paragraph*{Step 4: Proof of the PMP.}

Condensing the results of Step 2 and Step 3 and setting $\lambda_i = 0$ for all $i \in \{k+1,\dots,n\}$, we have built a covector $P_T \in C^0(K,\R^d)$ given explicitly by
\begin{equation}
\label{eq:EndPointMultiplier}
P_T = \lambda_0 \nabla \varphi(\mu^*(T)) + \sum_{i=1}^n \lambda_i \nabla  \Psi_i(\mu^*(T)),
\end{equation}
for some $(\lambda_0,\lambda_1,\dots,\lambda_n) \in \{0,1\} \times \R_+^n$ not all equal to $0$ and satisfying $\lambda_i \Psi_i(\mu^*(T)) = 0$ for every $i \in \{ 1,\dots,n\}$. Moreover, the covector $P_T$ is such that the family of end-point inequalities
\begin{equation}
\label{eq:EndPointIneq1}
\INTDom{ \big\langle  \hspace{-0.1cm} -P_T(x) , \F \big( T , \Phi_{(T,0)}^{u^*}(x) \big) \big\rangle}{\R^d}{\mu^*(T)(x)} \leq 0,
\end{equation}
hold for any solution $\F \in C^0([0,T] \times K,\R^d)$ of \eqref{eq:Linearised_CauchyNonLocal} with $\F^0 = 0$ and $w(\cdot)$ satisfying \eqref{eq:w_def} and \ref{hyp:CE}. Our goal now is to build a curve $\nu^*(\cdot)$ solution of the \textit{forward-backward} continuity equation
\begin{equation}
\label{eq:ForwardBackward_NonLocal}
\left\{
\begin{aligned}
& \partial_t \nu^*(t) + \Div \Big( \V(t,\nu^*(t)) \nu^*(t) \Big) = 0, \\
& \pi^1_{\#} \nu^*(t) = \mu^*(t) \hspace{0.55cm} \text{for all times $t \in [0,T]$}, \\
& \nu^*(T) = (\Id , -P_T)_{\#} \mu^*(T),
\end{aligned}
\right.
\end{equation}
where the non-local velocity field $\V : [0,T] \times \Pcal_c(\R^{2d}) \times \R^{2d} \rightarrow \R^{2d}$ is defined by
\begin{equation}
\label{eq:PMP_NonLocalDef}
\V(t,\nu,x,r) := \begin{pmatrix}
v \big( t , \pi^1_{\#} \nu ,u^*(t), x \big) \\ \\ -\D_x v \big( t , \pi^1_{\#} \nu,u^*(t),x \big)^{\hspace{-0.1cm}\top} r -  \mathlarger{\INTDom{\D_{\mu} v \big(t,\pi^1_{\#} \nu,u^*(t),y \big)(x)^{\top} p \,  }{\R^{2d}}{\nu(y,p)}}
\end{pmatrix},
\end{equation}
for $\Lcal^1$-almost every $t \in [0,T]$ and any $(\nu,x,r) \in \Pcal_c(\R^{2d}) \times \R^{2d}$.

Observe that $\V(\cdot,\cdot,\cdot)$ does not satisfy hypotheses \ref{hyp:CE}, so that we cannot directly apply the existence result of Corollary \ref{cor:NonLocalPDE} to assert that \eqref{eq:ForwardBackward_NonLocal} admits solutions. The following lemma, originally established in \cite{PMPWass}, provides an explicit disintegration construction of such solutions by exploiting the \textit{cascaded} structure of the system (see also \cite{PMPWassConst}).

\begin{lem}[Definition and existence of solutions to  \eqref{eq:ForwardBackward_NonLocal}]
\label{lem:HamiltonianConstruction}
Let $(\mu^*(\cdot),u^*(\cdot))$ be a strong local minimiser for $(\Ppazo_{\MC})$ and suppose that hypotheses \textnormal{\ref{hyp:MCP}} hold. For $\mu^*(T)$-almost every $x \in \R^d$, denote by $(\Psi^x_{(T,t)}(\cdot))_{t \in [0,T]}$ the family of \textnormal{backward non-local flows}, solutions of
\begin{equation*}
\left\{
\begin{aligned}
\partial_t \Psi^x_{(T,t)}(r) & = - \, \D_x v \Big( t, \mu^*(t) ,u^*(t) , \Phi^{u^*}_{(T,t)}(x)\Big)^{\hspace{-0.1cm} \top} \Psi_{(T,t)}^x(r) \\
& \hspace{0.45cm} -\INTDom{\bigg( \D_{\mu} v \Big( t,\mu^*(t),u^*(t) , \Phi_{(T,t)}^{u^*}(y) \Big) \big( \Phi^{u^*}_{(T,t)}(x) \big)^{\hspace{-0.1cm} \top} \Psi^y_{(T,t)}(p) \bigg)}{\R^{2d}}{\big( (\Id,-P_T)_{\#} \mu^*(T) \big)(y,p)}, \\
\Psi^x_{(T,T)}(r) & = r,
\end{aligned}
\right.
\end{equation*}
and define the curve of measures $\sigma^*_x(\cdot) \in \AC([0,T],\Pcal_c(\R^d))$ as
\begin{equation*}
\sigma^*_x(t) := \Psi^x_{(T,t)}(\cdot)_{\#} \delta_{(-P_T(x))},
\end{equation*}
for all times $t \in [0,T]$. Then, the curve given by $\nu^* : t \in [0,T] \mapsto \big( \Phi^{u^*}_{(T,t)} \circ \pi^1, \pi^2 \big)_{\#} \nu_T^*(t)$ with
\begin{equation*}
\nu^*_T(t) := \INTDom{\sigma_x^*(t)}{\R^d}{\mu^*(T)(x)},
\end{equation*}
is a solution of \eqref{eq:ForwardBackward_NonLocal}. Moreover, there exist $R_r'>0$ and $m_r'(\cdot) \in L^1([0,T],\R_+)$ such that
\begin{equation*}
\supp(\nu^*(t)) \subset K' \times K' \qquad \text{and} \qquad W_1(\nu^*(t),\nu^*(s)) \leq \INTSeg{m_r'(\tau)}{\tau}{s}{t},
\end{equation*}
for all times $0 \leq s \leq t \leq T$, where $K' := B(0,R_r')$.
\end{lem}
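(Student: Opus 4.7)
The plan is to exploit the cascaded structure of \eqref{eq:ForwardBackward_NonLocal}: the first marginal of $\nu^*(\cdot)$ is pinned to the optimal curve $\mu^*(\cdot)$, and the non-local coupling in the ODE defining $\Psi^x_{(T,t)}(r)$ involves the backward flows only through their values at the distinguished pairs $(y,-P_T(y))$, because $(\Id,-P_T)_{\#}\mu^*(T)$ is concentrated on the graph of $-P_T$. This makes it natural to first solve a closed integro-differential equation for the ``diagonal'' trace $q(t,x) := \Psi^x_{(T,t)}(-P_T(x))$, and only afterwards recover the full family of backward flows and the joint measure $\nu^*(\cdot)$.

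First I would substitute $p = -P_T(y)$ into the defining ODE of $\Psi^y_{(T,t)}$ to see that $q$ must solve
\begin{equation*}
\partial_t q(t,x) = -A(t,x)^{\top} q(t,x) - \INTDom{B(t,x,y)^{\top} q(t,y)}{\R^d}{\mu^*(T)(y)}, \qquad q(T,x) = -P_T(x),
\end{equation*}
with $A(t,x) := \D_x v(t,\mu^*(t),u^*(t),\Phi^{u^*}_{(T,t)}(x))$ and $B(t,x,y) := \D_\mu v(t,\mu^*(t),u^*(t),\Phi^{u^*}_{(T,t)}(y))(\Phi^{u^*}_{(T,t)}(x))$. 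Under \ref{hyp:MCP}, the maps $A,B$ are Carathéodory and uniformly bounded on $[0,T]\times K$ and $[0,T]\times K \times K$ respectively with $K := B(0,R_r)$, so a Banach fixed-point argument in $C^0([0,T]\times K,\R^d)$ yields a unique continuous $q$ together with a uniform bound $|q(t,x)| \leq C$ depending only on $\NormC{P_T}{0}{K,\R^d}$ and the constants from \ref{hyp:MCP}. Each full backward flow $\Psi^x_{(T,t)}(r)$ is then recovered by solving, for each $(x,r)$, a linear non-autonomous ODE with the now-known affine source $-\int B(t,x,y)^{\top} q(t,y)\,d\mu^*(T)(y)$. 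In particular $\sigma^*_x(t) = \delta_{q(t,x)}$, so that
\begin{equation*}
\nu^*(t) = \big(\Phi^{u^*}_{(T,t)}(\cdot),\, q(t,\cdot)\big)_{\#}\mu^*(T),
\end{equation*}
and $\pi^1_{\#}\nu^*(t) = \mu^*(t)$ follows from the semigroup property of the optimal flow, while $\nu^*(T) = (\Id,-P_T)_{\#}\mu^*(T)$ follows from $q(T,\cdot) = -P_T$.

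The main technical step is verifying the distributional identity in \eqref{eq:ForwardBackward_NonLocal}. I would test against an arbitrary $\xi \in C^\infty_c(\R^{2d})$ and differentiate in $t$ the equality
\begin{equation*}
\INTDom{\xi(x,r)}{\R^{2d}}{\nu^*(t)(x,r)} = \INTDom{\xi\big(\Phi^{u^*}_{(T,t)}(x),q(t,x)\big)}{\R^d}{\mu^*(T)(x)},
\end{equation*}
using the ODE for $\Phi^{u^*}_{(T,t)}$ (which directly produces the first component of $\V$) and the integro-ODE for $q$. The delicate bookkeeping is in the second-coordinate term: after expanding $\partial_t q(t,x)$ and performing the change of variables $y \mapsto (\Phi^{u^*}_{(T,t)}(y),q(t,y))$, the integral $\int B(t,x,y)^{\top} q(t,y)\,d\mu^*(T)(y)$ rewrites as $\int \D_\mu v(t,\mu^*(t),u^*(t),y')(\Phi^{u^*}_{(T,t)}(x))^{\top} p\,d\nu^*(t)(y',p)$, which is precisely the integral in the second component of $\V(t,\nu^*(t),\Phi^{u^*}_{(T,t)}(x),q(t,x))$ from \eqref{eq:PMP_NonLocalDef}. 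This pushforward identification is the principal obstacle: one must carefully track which variables are being integrated against $\mu^*(T)$ versus $\nu^*(t)$ so that the two nested integrals line up through a single change of variables, and then invoke Fubini to close the distributional identity.

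Finally, the support bound $\supp(\nu^*(t)) \subset K'\times K'$ with $R_r' := \max(R_r,C)$ is immediate from the uniform bound on $q$ and the support estimate \eqref{eq:SuppAC_Est} for $\mu^*(\cdot)$, and the $W_1$-Lipschitz-in-time estimate is obtained by using the joint transport plan $\big(\Phi^{u^*}_{(T,t)}(x),q(t,x),\Phi^{u^*}_{(T,s)}(x),q(s,x)\big)_{\#}\mu^*(T)$ and combining the time-Lipschitz estimate for $\Phi^{u^*}$ from \eqref{eq:SuppAC_Est} with an analogous estimate for $q$ derived from its integro-ODE via Grönwall.
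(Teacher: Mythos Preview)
Your argument is correct and is precisely the cascaded construction the paper alludes to just before the lemma; in the paper itself the proof is deferred entirely to \cite[Lemma 3.7]{PMPWassConst} and \cite[Lemma 6]{PMPWass}, and your sketch reproduces the content of those references. The key reduction you identify---that $(\Id,-P_T)_{\#}\mu^*(T)$ is concentrated on the graph of $-P_T$, so the non-local coupling closes on the diagonal trace $q(t,x)=\Psi^x_{(T,t)}(-P_T(x))$ and $\nu^*(t)=(\Phi^{u^*}_{(T,t)},q(t,\cdot))_{\#}\mu^*(T)$---is exactly the mechanism used there, and your verification of the distributional identity via the change of variables $y\mapsto(\Phi^{u^*}_{(T,t)}(y),q(t,y))$ is the correct bookkeeping.
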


\begin{proof}
See \cite[Lemma 3.7]{PMPWassConst} and \cite[Lemma 6]{PMPWass}.
\end{proof}

Applying to the Hamiltonian $\nu \in \Pcal_c(\R^d) \mapsto \H(t,\nu,u) \in \R$ defined in \eqref{eq:Thm_HamiltonianDef} the results of Section \ref{appendix:Examples}, which provide analytical expressions for the gradients of smooth integral functionals, one can check
\begin{equation*}
\V(t,\nu^*(t),x,r) = \J_{2d} \nabla_{\nu} \H(t,\nu^*(t),u^*(t))(x,r),
\end{equation*}
for $\Lcal^1$-almost every $t \in [0,T]$ and any $(x,r) \in \R^{2d}$. Since $\nu^*(T) = (\Id,-P_T)_{\#} \mu^*(T)$ by the construction detailed in Lemma \ref{lem:HamiltonianConstruction} with $P_T$ being given by \eqref{eq:EndPointMultiplier}, the curve $\nu^*(\cdot)$ is a solution of \eqref{eq:Thm_HamiltonianSys}. In the next lemma, we state an auxiliary result which will in turn yield the maximisation condition.

\begin{lem}[Derivative of an auxiliary functional]
\label{lem:AuxQuant}
Given $w(\cdot)$ satisfying \eqref{eq:w_def} and \ref{hyp:CE}, define
\begin{equation}
\label{eq:H_Def}
\Hpazo : t \in [0,T] \mapsto \INTDom{\big\langle r , \F \big(t, \Phi_{(t,0)}^{u^*}(x) \big) \big\rangle}{\R^{2d}}{\nu^*(t)(x,r)},
\end{equation}
where $\F \in C^0([0,T] \times K,\R^d)$ is the corresponding solution of \eqref{eq:Linearised_CauchyNonLocal} with $\F^0 = 0$. Then, $\Hpazo(\cdot) \in \AC([0,T],\R)$ and its pointwise derivative is given explicitly by
\begin{equation}
\label{eq:H_Derivative}
\tderv{}{t} \Hpazo(t) = \INTDom{\langle r , w(t,x) \rangle}{\R^{2d}}{\nu^*(t)(x,r)},
\end{equation}
for $\Lcal^1$-almost every $t \in [0,T]$.
\end{lem}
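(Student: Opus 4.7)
The plan is to transport the integral defining $\Hpazo(t)$ to a fixed reference measure, so that time-differentiation commutes with integration, and then to exploit the symplectic duality between the backward costate equation from Lemma \ref{lem:HamiltonianConstruction} and the linearised Cauchy problem \eqref{eq:Linearised_CauchyNonLocal}. Using $\nu^*(t) = \bigl(\Phi^{u^*}_{(T,t)} \circ \pi^1 , \pi^2\bigr)_{\#} \nu_T^*(t)$ together with $\nu^*_T(t) = \INTDom{\sigma^*_x(t)}{\R^d}{\mu^*(T)(x)}$, the fact that $\sigma^*_x(t) = \delta_{\Psi^x_{(T,t)}(-P_T(x))}$, and the semigroup identity $\Phi^{u^*}_{(t,0)} \circ \Phi^{u^*}_{(T,t)} = \Phi^{u^*}_{(T,0)}$, I would first rewrite
\begin{equation*}
\Hpazo(t) = \INTDom{\bigl\langle q(t,x), z(t,x) \bigr\rangle}{\R^d}{\mu^*(T)(x)},
\end{equation*}
where $q(t,x) := \Psi^x_{(T,t)}(-P_T(x))$ is the costate trajectory and $z(t,x) := \F\bigl(t, \Phi^{u^*}_{(T,0)}(x)\bigr)$ is the linearised state evaluated along characteristics issued from $\mu^*(T)$.

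Next I would check that $t \mapsto q(t,x)$ and $t \mapsto z(t,x)$ are absolutely continuous, uniformly in $x \in \supp(\mu^*(T))$, with $\partial_t$-bounds dominated by an $\Lcal^1([0,T],\R_+)$ map. This is a direct consequence of the integral equations satisfied by $q$ and $z$, together with the boundedness of $\D_x v$, $\D_\mu v$, $P_T$, and the sublinearity assumption on $w(\cdot)$ built into hypotheses \ref{hyp:CE} and \ref{hyp:MCP}. Differentiation under the integral sign is therefore justified and produces
\begin{equation*}
\tderv{}{t} \Hpazo(t) = \INTDom{\Bigl( \langle \partial_t q(t,x), z(t,x)\rangle + \langle q(t,x), \partial_t z(t,x)\rangle \Bigr)}{\R^d}{\mu^*(T)(x)}.
\end{equation*}
Substituting the backward flow equation of Lemma \ref{lem:HamiltonianConstruction} for $\partial_t q$ and the linearised equation \eqref{eq:Linearised_CauchyNonLocal} for $\partial_t z$, and performing in the latter the change of variable $\tilde{y} = \Phi^{u^*}_{(T,0)}(y)$ that pushes $\mu^0$ to $\mu^*(T)$, yields a sum of four terms involving $\D_x v$, $\D_\mu v$, and the inhomogeneous perturbation $w$. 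The $\D_x v$ contributions cancel by adjointness of the pairing. The two $\D_\mu v$ double-integrals coincide after Fubini and relabelling $x \leftrightarrow y$, and therefore cancel as well, since
\begin{equation*}
\bigl\langle \D_\mu v(\cdot, X(t,y))(X(t,x))^\top q(t,y), z(t,x) \bigr\rangle
\quad\text{and}\quad
\bigl\langle q(t,x), \D_\mu v(\cdot, X(t,x))(X(t,y)) z(t,y) \bigr\rangle
\end{equation*}
are interchanged by the substitution $x \leftrightarrow y$. Only $\INTDom{\langle q(t,x), w(t, \Phi^{u^*}_{(T,t)}(x))\rangle}{\R^d}{\mu^*(T)(x)}$ survives, and reversing the pushforward from the first step yields \eqref{eq:H_Derivative}.

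The main obstacle will be bookkeeping rather than analysis: keeping the four flow notations $\Phi^{u^*}_{(0,t)}, \Phi^{u^*}_{(T,t)}, \Phi^{u^*}_{(t,0)}, \Phi^{u^*}_{(T,0)}$ and the nested disintegration $\nu^*(t) \leftrightsquigarrow \nu^*_T(t) \leftrightsquigarrow \{\sigma^*_x(t)\}_{x}$ coherent across the successive changes of variable, and correctly matching the transposes $\D_x v^\top, \D_\mu v(\cdot)(\cdot)^\top$ produced by duality against those appearing in the backward equation. The cancellation itself is the routine symplectic identity expected from the Hamiltonian structure of \eqref{eq:Thm_HamiltonianSys}, and absolute continuity of $\Hpazo(\cdot)$ follows automatically from the integrable domination of its pointwise derivative.
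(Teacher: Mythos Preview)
Your proposal is correct and is precisely the computation the paper has in mind: the authors do not spell out the argument here but simply refer to \cite[Lemma 3.9]{PMPWassConst} and \cite[Lemma 7]{PMPWass}, where the same strategy---pulling $\Hpazo(t)$ back to a fixed reference measure via the flow, pairing the linearised state with the backward costate, and cancelling the $\D_x v$ and $\D_\mu v$ terms by duality and Fubini---is carried out in detail. Your handling of the disintegration $\nu^*(t) \leftrightsquigarrow \nu^*_T(t) \leftrightsquigarrow \{\sigma^*_x(t)\}$ and of the change of variable $\tilde{y} = \Phi^{u^*}_{(0,T)}(y)$ pushing $\mu^0$ to $\mu^*(T)$ is exactly what is needed to align the integral in \eqref{eq:Linearised_CauchyNonLocal} with the one in the backward equation of Lemma \ref{lem:HamiltonianConstruction}.
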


\begin{proof}
The proof of this result is a matter of fairly long computations which mimic those already performed in \cite[Lemma 3.9]{PMPWassConst} and \cite[Lemma 7]{PMPWass}.
\end{proof}

Observe that by the definition of $\nu^*(\cdot)$ given in Lemma \ref{lem:HamiltonianConstruction} together with \eqref{eq:EndPointIneq1}, one has
\begin{equation*}
\INTDom{\big\langle r , \F \big(T, \Phi_{(T,0)}^{u^*}(x) \big) \big\rangle}{\R^{2d}}{\nu^*(T)(x,r)} \leq 0,
\end{equation*}
which can in turn be reformulated as
\begin{equation}
\label{eq:H_Ineq}
\Hpazo(T) \leq 0,
\end{equation}
with $\Hpazo(\cdot)$ being given as in \eqref{eq:H_Def}. Remark now that since $\Hpazo(\cdot) \in \AC([0,T],\R)$ and $\F(0) = \F^0 =  0$ by construction, it further holds by Lemma \ref{lem:AuxQuant}
\begin{equation}
\label{eq:H_Integral}
\Hpazo(T) = \INTSeg{\tderv{}{t} \Hpazo(t)}{t}{0}{T} = \INTSeg{\INTDom{\langle r , w(t,x) \rangle}{\R^{2d}}{\nu^*(t)(x,r)}}{t}{0}{T},
\end{equation}
so that upon merging \eqref{eq:H_Ineq} and \eqref{eq:H_Integral}, we obtain
\begin{equation}
\label{eq:Tangent_IntegralIneq}
\INTSeg{\INTDom{\langle r , w(t,x) \rangle}{\R^{2d}}{\nu^*(t)(x,r)}}{t}{0}{T} \leq 0,
\end{equation}
for any $w(\cdot)$ satisfying \eqref{eq:w_def} and \ref{hyp:CE}.

Let us consider for some given $t \in [0,T]$ and $m \geq 1$ the closed subset of controls
\begin{equation*}
\tilde{U}_m(t) := \Big\{ u \in U ~\text{s.t.}~ \H(t,\nu^*(t),u) \geq \H(t,\nu^*(t),u^*(t)) + \tfrac{1}{m} \Big\},
\end{equation*}
and suppose that the associated $\Lcal^1$-measurable subset $\Omega_m \subset [0,T]$, defined by
\begin{equation*}
\Omega_m := \big\{ t \in [0,T] ~\text{s.t.}~ \tilde{U}_m(t) \neq \emptyset  \big\},
\end{equation*}
has positive measure for some $m \geq 1$. Then by Theorem \cite[Theorem 8.2.9]{Aubin1990}, we can find a measurable selection $t \in [0,T]\mapsto \tilde{u}(t) \in U$ such that $\tilde{u}(t) \in \tilde{U}_m(t)$ for $\Lcal^1$-almost every $t \in \Omega_m$ and $\tilde{u}(t) =  u^*(t)$ otherwise. Observe next that the map defined by
\begin{equation*}
\tilde{w}(t) := v(t,\mu^*(t),\tilde{u}(t)) - v(t,\mu^*(t),u^*(t)),
\end{equation*}
for all times $t \in [0,T]$ satisfies \eqref{eq:w_def} and hypotheses \ref{hyp:CE}. Moreover, it is such that
\begin{equation*}
\INTSeg{\INTDom{\langle r , \tilde{w}(t,x) \rangle}{\R^{2d}}{\nu^*(t)(x,r)}}{t}{0}{T} =\INTDom{\Big( \H(t,\nu^*(t),\tilde{u}(t)) - \H(t,\nu^*(t),u^*(t)) \Big)}{\Omega_m}{t} \geq \tfrac{1}{m} \Lcal^1(\Omega_m),
\end{equation*}
which violates \eqref{eq:Tangent_IntegralIneq}. Thus, the set $\Omega := \cup_{m \geq 1} \Omega_m \subset [0,T]$ necessarily has zero measure, which by definition of the sets $\tilde{U}_m(t)$ for $\Lcal^1$-almost every $t \in [0,T]$ yields the maximisation condition \eqref{eq:Thm_Maximisation}.

\begin{rmk}[On the choice of performing separations on $\A_T$ and $\Bpazo_T$]
Usually, geometric proofs of the PMP and its second-order variants tend to involve separation arguments on sets of trajectories which are not transposable to the setting of Wasserstein spaces. Thus in Step 2 and Step 3, we chose to perform separation arguments directly on $\A_T$ and $\Bpazo_T$, which are subsets of finite-dimensional euclidean spaces defined as the images of the reachable set $\Rpazo_T^L$ under the action of the gradients of the cost and constraint functionals. As illustrated before, this choice allows for a very simple and concise discrimination between the abnormal and normal scenarios of the PMP.
\end{rmk}


\subsection{Adaptation of the PMP to constrained Bolza problems}
\label{subsection:Bolza}

In this section, we make use of the results of Section \ref{subsection:Mayer} to obtain an extension of Theorem \ref{thm:PMPMayer} to the setting of general constrained Bolza problems of the form
\begin{equation*}
(\Ppazo_\BC) ~ \left\{
\begin{aligned}
\min_{u(\cdot) \in \U} & \left[ \INTSeg{L(t,\mu(t),u(t))}{t}{0}{T} + \varphi(\mu(T)) \right] \\
\text{s.t.}~ & \left\{
\begin{aligned}
& \partial_t \mu(t) + \Div \Big( v (t,\mu(t),u(t)) \mu(t) \Big) = 0, \\
& \mu(0) \, = \mu^0, \\
& \mu(T) \in \Qpazo_T, \\
\end{aligned}
\right.
\end{aligned}
\right.
\end{equation*}
where $L:[0,T] \times \mathcal{P}_c(\mathbb{R}^d) \times U \mapsto \mathbb{R}$ is a given running cost which satisfies the following.

\begin{taggedhyp}{\textbn{(L)}}
\label{hyp:L}
For every $R > 0$, assume that the following holds with $K := B(0,R)$.
\begin{enumerate}
\item[$(i)$] The map $t \in [0,T] \mapsto L(t,\mu,u)$ is $\Lcal^1$-measurable for any $(\mu,u) \in \Pcal_c(\R^d) \times U$. Moreover, there exists $k(\cdot) \in L^1([0,T],\R_+)$ such that $\sup_{u \in U}|L(t,\delta_0,u)| \leq k(t)$ for $\Lcal^1$-almost every $t \in [0,T]$.
\item[$(ii)$] The map $u \in U \mapsto L(t,\mu,u)$ is continuous for $\Lcal^1$-almost every $t \in [0,T]$ and any $\mu \in \Pcal_c(\R^d)$.
\item[$(iii)$] The map $\mu \in \Pcal_c(\R^d) \mapsto L(t,\mu,u)$ is locally differentiable in the sense of Definition \ref{def:LocalDiff} and the application $x \in \R^d \mapsto \nabla_{\mu} L(t,\mu,u)(x) \in \R^d$ is continuous for $\Lcal^1$-almost every $t \in [0,T]$ and any $(\mu,u) \in \Pcal_c(\R^d) \times U$. Moreover, there exists a map $\Lpazo_K(\cdot) \in L^1([0,T],\R_+)$ such that
\begin{equation*}
|L(t,\mu,u) - L(t,\nu,u)| \leq \Lpazo_K(t) W_1(\mu,\nu),
\end{equation*}
for every $\mu,\nu \in \Pcal(K)$ and any $u \in U$.
\end{enumerate}
\end{taggedhyp}

In the present context, we say that an admissible pair $(\mu^*(\cdot),u^*(\cdot)) \in \AC([0,T],\Pcal_c(\R^d)) \times \U$ is a strong local minimiser for $(\Ppazo_{\BC})$ if there exists $\epsilon > 0$ such that
\begin{equation*}
\INTSeg{L(t,\mu^*(t),u^*(t))}{t}{0}{T} + \varphi(\mu^*(T)) \leq \INTSeg{L(t,\mu(t),u(t))}{t}{0}{T} + \varphi(\mu(T)) ,
\end{equation*}
for every admissible pair $(\mu(\cdot),u(\cdot))$ which satisfies $\sup_{t \in [0,T]} W_1(\mu^*(t),\mu(t))  \leq \epsilon$.

\begin{thm}[Pontryagin Maximum Principe for $(\Ppazo_{\BC})$]
\label{thm:PMPBolza}
Let $(\mu^*(\cdot),u^*(\cdot)) \in \AC([0,T],\Pcal_c(\R^d)) \times \U$ be a strong local minimiser for $(\Ppazo_{\BC})$, and suppose that hypotheses \textnormal{\ref{hyp:MCP}} and \textnormal{\ref{hyp:L}} hold.

Then, the conclusions of Theorem \ref{thm:PMPMayer} hold with the Hamiltonian $\H_{\lambda_0} : [0,T] \times \Pcal_c(\R^{2d}) \times U \rightarrow \R$ associated to $(\Ppazo_{\BC})$, defined by
\begin{equation}
\label{eq:Thm_BolzaHamiltonian}
\H_{\lambda_0}(t,\nu,u) := \INTDom{\big\langle r , v(t,\pi^1_{\#} \nu,u,x) \big\rangle}{\R^{2d}}{\nu(x,r)} - \lambda_0 L(t,\pi^1_{\#}\nu,u),
\end{equation}
for all $(t,\nu,u) \in [0,T] \times \Pcal_c(\R^{2d}) \times U$, which Wasserstein gradient writes
\begin{equation}
\label{eq:Thm_BolzaGrad}
\begin{aligned}
& \nabla_{\nu} \H_{\lambda_0}(t,\nu^*(t),u^*(t))(x,r) \\
& \hspace{1cm} = \begin{pmatrix}
 \D_x v \big( t,\mu^*(t),u^*(t),x \big)^{\top} r + \INTDom{\D_{\mu} v \big( t,\mu^*(t),u^*(t),y \big)(x)^{\top} p \,}{\R^{2d}}{\nu^*(t)(y,p)} \\ \hspace{1.05cm} - \lambda_0 \nabla_{\mu} L(t,\mu^*(t),u^*(t))(x) \\ \\
v \big( t,\mu^*(t),u^*(t),x \big)
\end{pmatrix},
\end{aligned}
\end{equation}
for $\Lcal^1$-almost every $t \in [0,T]$ and any $(x,r) \in \R^{2d}$.
\end{thm}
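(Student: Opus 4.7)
The plan is to reduce the constrained Bolza problem $(\Ppazo_{\BC})$ to a Mayer problem on an augmented state space $\R^{d+1}$, and then invoke Theorem \ref{thm:PMPMayer}. Following the classical procedure, I introduce an extra scalar coordinate $y$ which tracks the accumulated running cost, and define the augmented non-local velocity field $\tilde{v} : [0,T] \times \Pcal_c(\R^{d+1}) \times U \times \R^{d+1} \rightarrow \R^{d+1}$ by
\[
\tilde{v}(t,\tilde{\mu},u,(x,y)) \, := \, \bigl(v(t,P_{\#}\tilde{\mu},u,x), \, L(t,P_{\#}\tilde{\mu},u)\bigr),
\]
where $P : \R^{d+1} \to \R^d$ denotes the projection onto the first $d$ coordinates. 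Set the augmented initial datum $\tilde{\mu}^0 := \mu^0 \otimes \delta_0$, the augmented final cost $\tilde{\varphi}(\tilde{\mu}) := \varphi(P_{\#}\tilde{\mu}) + \INTDom{y}{\R^{d+1}}{\tilde{\mu}(x,y)}$, and the augmented constraints $\tilde{\Psi}_i(\tilde{\mu}) := \Psi_i(P_{\#}\tilde{\mu})$ for $i \in \{1,\dots,n\}$. Since the $y$-component of $\tilde{v}$ is independent of $(x,y)$, the solution of the augmented controlled non-local continuity equation starting from $\tilde{\mu}^0$ has the explicit form $\tilde{\mu}(t) = (\Id, Y(t))_{\#}\mu(t)$ with $Y(t) := \INTSeg{L(s,\mu(s),u(s))}{s}{0}{t}$; in particular $P_{\#}\tilde{\mu}(T) = \mu(T)$ and $\tilde{\varphi}(\tilde{\mu}(T))$ coincides with the Bolza cost, so that the augmented Mayer problem and $(\Ppazo_{\BC})$ have matching admissible pairs, costs and constraints. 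As $P$ is $1$-Lipschitz, $W_1$-neighbourhoods transfer through the projection, which ensures that $(\tilde{\mu}^*(\cdot),u^*(\cdot))$ with $\tilde{\mu}^*(t) := (\Id, Y^*(t))_{\#}\mu^*(t)$ is a strong local minimiser of the augmented Mayer problem.

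To apply Theorem \ref{thm:PMPMayer}, the next step is to verify hypotheses \ref{hyp:MCP} for the augmented data. The key chain rule observation is that if $f : \Pcal_c(\R^d) \to \R$ is locally differentiable at $P_{\#}\tilde{\mu}$ with gradient $\nabla f(P_{\#}\tilde{\mu})$, then $\tilde{f}(\tilde{\mu}) := f(P_{\#}\tilde{\mu})$ is locally differentiable at $\tilde{\mu}$ with Wasserstein gradient $(\nabla f(P_{\#}\tilde{\mu})(x), 0) \in \R^{d+1}$, which one recovers by applying Proposition \ref{prop:Chainrule} to the transport plan $(P \times P)_{\#}\Bmu \in \Gamma(P_{\#}\tilde{\mu},P_{\#}\tilde{\nu})$ for every $\Bmu \in \Gamma(\tilde{\mu},\tilde{\nu})$, combined with the estimate $W_{2,(P \times P)_{\#}\Bmu}(P_{\#}\tilde{\mu},P_{\#}\tilde{\nu}) \leq W_{2,\Bmu}(\tilde{\mu},\tilde{\nu})$. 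The linear functional $\tilde{\mu} \mapsto \INTDom{y}{\R^{d+1}}{\tilde{\mu}(x,y)}$ is trivially locally differentiable with constant gradient $(0,1) \in \R^{d+1}$. Combined with \ref{hyp:MCP} on $v,\varphi,\Psi_i$ and \ref{hyp:L} on $L$, this ensures that $\tilde{v},\tilde{\varphi},\tilde{\Psi}_i$ satisfy hypotheses \ref{hyp:MCP} in dimension $d+1$.

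Applying Theorem \ref{thm:PMPMayer} produces multipliers $(\lambda_0,\lambda_1,\dots,\lambda_n) \in \{0,1\} \times \R_+^n$ and an augmented state-costate curve $\tilde{\nu}^*(\cdot) \in \AC([0,T],\Pcal_c(\R^{2(d+1)}))$ satisfying the Hamiltonian system with $\pi^1_{\#}\tilde{\nu}^*(t) = \tilde{\mu}^*(t)$. The crucial structural observation is that the $y$-component of the augmented costate is \emph{identically equal to $-\lambda_0$} throughout $[0,T]$: indeed, $\nabla\tilde{\varphi}(\tilde{\mu}^*(T)) = (\nabla\varphi(\mu^*(T)), 1)$ and $\nabla\tilde{\Psi}_i(\tilde{\mu}^*(T)) = (\nabla\Psi_i(\mu^*(T)), 0)$, so the transversality condition prescribes the $y$-component of the terminal costate to be $-\lambda_0$; and since $\tilde{v}$ is independent of $y$ and the Wasserstein gradient in $\tilde{\mu}$ of each component of $\tilde{v}$ has zero $y$-entry by the chain rule above, the Hamiltonian adjoint equation forces the $y$-costate to be time-invariant. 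Defining $\nu^*(t)$ as the pushforward of $\tilde{\nu}^*(t)$ under the map $((x,y),(r,s)) \mapsto (x,r)$ gives $\pi^1_{\#}\nu^*(t) = \mu^*(t)$, and substituting the constant value $s \equiv -\lambda_0$ into the augmented Hamiltonian yields precisely $\H_{\lambda_0}(t,\nu^*(t),u)$ of \eqref{eq:Thm_BolzaHamiltonian}. The explicit computation of $\nabla_{\tilde{\nu}}\tilde{\H}$ using the chain rule for $\tilde{v}$, with the $s' = -\lambda_0$ contribution inside the integral, produces the additional $-\lambda_0 \nabla_{\mu} L$ term in the gradient expression \eqref{eq:Thm_BolzaGrad}, while the maximisation condition and the complementarity slackness relations transfer verbatim from the augmented Mayer problem. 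I anticipate that the main technical obstacle is the careful verification of \ref{hyp:MCP} for the augmented data --- tracking Wasserstein gradients through the projection $P_{\#}$ and ensuring the required continuity properties of $\D_x \tilde{v},\D_{\tilde{\mu}}\tilde{v}$ --- while the identification of the constant $y$-costate and the projection back to $\R^{2d}$ reduce to direct linear-algebraic manipulations once the augmented PMP is in hand.
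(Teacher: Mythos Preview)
Your proposal is correct and follows essentially the same reduction-to-Mayer strategy as the paper: augment the state by the accumulated running cost, apply Theorem~\ref{thm:PMPMayer} to the resulting problem on $\Pcal_c(\R^{d+1})$, observe that the extra costate component is frozen at $-\lambda_0$, and project back to $\Pcal_c(\R^{2d})$. The one point the paper makes explicit that you leave buried in your ``main technical obstacle'' is that the augmented velocity $\tilde{v}$ does not automatically satisfy the sublinear growth bound of \ref{hyp:CE}-$(i)$, since the Lipschitz constant $\Lpazo_K$ of $L$ in \ref{hyp:L}-$(iii)$ depends on $K$; the paper resolves this by redefining $L(t,\mu,u) := L(t,(\pi_K)_{\#}\mu,u)$ for $\mu$ outside the a-priori support ball $K := B(0,R_r)$, which is harmless because all trajectories of interest remain in $\Pcal(K)$.
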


In what follows, we show how the statements of Theorem \ref{thm:PMPBolza} can be recovered by directly applying Theorem \ref{thm:PMPMayer} to a suitable augmented Mayer problem. We shall henceforth denote by $\Bpi^1 : \R^{d+1} \times \R^{d+1} \rightarrow\R^{d+1}$ and $\Bpi^2 : \R^{d+1} \times \R^{d+1} \rightarrow\R^{d+1}$ the projection operators onto the first and second factor.

Given an admissible pair $(\mu(\cdot),u(\cdot))$ for $(\Ppazo_{\BC})$, let $z(\cdot) \in \AC([0,T],\R)$ be the curve defined as 
\begin{equation}
\label{eq:zdef}
z(t) := \INTSeg{L(s,\mu(s),u(s))}{s}{0}{t},
\end{equation}
and consider the corresponding \textit{augmented state}
\begin{equation*}
\Bmu(t) := \mu(t) \times \delta_{z(t)},
\end{equation*}
associated to $(\Ppazo_{\BC})$, both defined for all times $t \in [0,T]$. Below given a measure $\Bmu \in \Pcal_c(\R^{d+1})$, we will set $\mu := (\pi^1,\dots,\pi^d)_{\#} \Bmu \in \Pcal_c(\R^d)$. It is then clear that if $(\mu^*(\cdot),u^*(\cdot))$ is a strong local minimiser for $(\Ppazo_{\BC})$, then choosing $z^*(\cdot)$ as in \eqref{eq:zdef} with $(\mu(\cdot),u(\cdot)) := (\mu^*(\cdot),u^*(\cdot))$ and defining $\Bmu^*(t) := \mu^*(t) \times \delta_{z^*(t)}$, the pair $(\Bmu^*(\cdot),u^*(\cdot))$ is a strong local minimiser for the Mayer problem
\begin{equation*}
(\hat{\Ppazo}_{\MC}) ~ \left\{
\begin{aligned}
\min_{u(\cdot) \in \U} & \left[ \hat{\varphi}(\Bmu(T)) \right], \\
\text{s.t.}~ \, & \left\{
\begin{aligned}
& \partial_t \Bmu(t) + \Div \Big( \hat{v} (t,\Bmu(t),u(t)) \Bmu(t) \Big) = 0, \\
& \Bmu(0) \, = \mu^0 \times \delta_0, \\
& \Bmu(T) \in \hat{\Qpazo}_T, \\
\end{aligned}
\right.
\end{aligned}
\right.
\end{equation*}
where we defined
\begin{equation}
\label{eq:Extended_Functionals}
\hat{\varphi}(\Bmu) := \varphi \big(\mu \big) + \INTDom{\hspace{-0.15cm} z \,}{\R^{d+1}}{\Bmu(x,z)} \quad \text{and} \quad \hat{v}(t,\Bmu,u,x,z) := \begin{pmatrix}
v(t,\mu,u,x) \, \\ L(t,\mu,u)
\end{pmatrix},
\end{equation}
for all $(t,\Bmu,u,x,z) \in [0,T] \times \Pcal_c(\R^{d+1}) \times U \times \R^{d+1}$, along with
\begin{equation*}
\hat{\Qpazo}_T := \Big\{ \Bmu \in \Pcal_2(\R^{d+1}) ~\text{s.t.}~ \hat{\Psi}_i(\Bmu) \leq 0 ~~ \text{for every $i \in \{1,\dots,n\}$} \Big\},
\end{equation*}
where $\hat{\Psi}_i(\Bmu) := \Psi_i(\mu)$. By applying the results of Section \ref{appendix:Examples}, the first-order derivatives of the augmented dynamics, final cost and constraint maps can be written explicitly as
\begin{equation}
\label{eq:Extended_Derivatives1}
\D_{(x,z)} \hat{v} \big( t , \Bmu , u , x ,z \big) =
\begin{pmatrix}
\D_x v \big( t,\mu,u,x \big) \\ 0
\end{pmatrix}, \qquad
\D_{\Bmu} \hat{v}(t,\Bmu,u,x,z) =
\begin{pmatrix}
\, \D_{\mu} v(t,\mu,u,x) & 0 \, \\ \, \nabla_{\mu} L(t,\mu,u) & 0 \,
\end{pmatrix},
\end{equation}
and
\begin{equation}
\label{eq:Extended_Derivatives2}
\nabla \hat{\varphi}(\Bmu) = \begin{pmatrix}
\nabla \varphi \big( \mu \big) \\ 1
\end{pmatrix}, \qquad
\nabla \hat{\Psi}_i(\Bmu) = \begin{pmatrix}
\nabla \Psi_i \big( \mu \big) \\ 0
\end{pmatrix},
\end{equation}
for $\Lcal^1$-almost every $t \in [0,T]$, all $(\Bmu,u,x,z) \in \Pcal_c(\R^{d+1}) \times U \times \R^{d+1}$ and any index $i \in \{1,\dots,n\}$.

By Corollary \ref{cor:NonLocalPDE}, there exists $R_r > 0$ such that $\supp(\mu^*(t)) \subset K := B(0,R_r)$ for all times $t \in [0,T]$ where $r > 0$ is such that $\mu^0 \in \Pcal(B(0,r))$. Hence, observe that under hypotheses \ref{hyp:L}-$(iii)$ and up to redefining the running cost for $\Lcal^1$-almost every $t \in [0,T]$ and any $u \in U$ as
\begin{equation*}
L(t,\mu,u) := L(t,(\pi_K)_{\#} \mu,u),
\end{equation*}
whenever $\supp(\mu) \not\subseteq K$, the extended velocity field $\hat{v} : [0,T] \times \Pcal(\R^{d+1}) \times U \times \R^{d+1} \mapsto \R^{d+1}$ satisfies hypotheses \ref{hyp:H} with $p=1$. This together with the definitions of $\hat{\varphi}(\cdot)$ and $\{ \hat{\Psi}_i(\cdot)\}_{1 \leq i \leq n}$ implies that hypotheses \ref{hyp:MCP} hold for $(\hat{\Ppazo}_{\MC})$. Thus by Theorem \ref{thm:PMPMayer}, there exist non-trivial multipliers $(\lambda_0,\dots,\lambda_n) \in \{0,1\} \times \R_+^n$ and a curve of measures $\Bnu^*(\cdot) \in \AC([0,T],\Pcal_c((\R^{d+1})^2)$ solution of the forward-backward Hamiltonian continuity equation
\begin{equation}
\label{eq:Extended_Dyn}
\left\{
\begin{aligned}
& \partial_t \Bnu^*(t) + \Div \Big( \J_{2(d+1)} \nabla_{\Bnu} \hat{\H}(t,\Bnu^*(t),u^*(t)) \Bnu^*(t) \Big) = 0, \\
& \pi^1_{\#} \Bnu^*(t) = \mu^*(t) \times \delta_{z^*(t)} \hspace{0.95cm} \text{for all times $t \in [0,T]$}, \\
& \Bnu^*(T) = \bigg( \Id \, , \, - \lambda_0 \nabla \hat{\varphi}(\Bmu^*(T)) - \sum_{i=1}^n \lambda_i \nabla \hat{\Psi}_i(\Bmu^*(T)) \bigg)_{\raisebox{6pt}{$\scriptstyle \#$}} \Bmu^*(T),
\end{aligned}
\right.
\end{equation}
such that the complementarity slackness condition 
\begin{equation}
\label{eq:Extended_Comp}
\lambda_i \hat{\Psi}_i(\Bmu^*(T)) = 0,
\end{equation}
holds for every index $i \in \{1,\dots,n\}$, and the maximisation condition 
\begin{equation}
\label{eq:Extended_Max}
\hat{\H}(t,\Bnu^*(t),u^*(t)) = \max_{u \in U} \, \hat{\H}(t,\Bnu^*(t),u).
\end{equation}
is satisfied for $\Lcal^1$-almost every $t \in [0,T]$. Here, the \textit{augmented Hamiltonian} $\hat{\H} : [0,T] \times \Pcal_c((\R^{d+1})^2) \times U \rightarrow \R$ associated to $(\hat{\Ppazo}_{\MC})$ is defined by
\begin{equation*}
\hat{\H}(t,\Bnu,u) := \INTDom{\big\langle (r,q) , \hat{v}(t,\Bpi^1_{\#} \Bnu,u,x,z) \big\rangle}{\R^{2d+2}}{\Bnu(x,z,r,q)},
\end{equation*}
for all $(t,\Bnu,u) \in [0,T] \times \Pcal_c((\R^{d+1})^2) \times U$. 

Observe now that by \eqref{eq:Extended_Functionals}, the augmented Hamiltonian can be further expressed as 
\begin{equation}
\label{eq:HamiltonianCorresp}
\begin{aligned}
\hat{\H}(t,\Bnu^*(t),u^*(t)) & = \INTDom{\Big\langle (r,q) , \hat{v}(t,\Bpi^1_{\#} \Bnu^*(t),u^*(t),x,z) \Big\rangle}{\R^{2d+2}}{\Bnu^*(t)(x,r,z,q)} \\
& = \INTDom{\Big( \langle r , v(t,\mu^*(t),u^*(t),x) \rangle + q L(t,\mu^*(t),u^*(t)) \Big)}{\R^{2d+2}}{\Bnu^*(t)(x,z,r,q)} 
\end{aligned}
\end{equation}
which along with \eqref{eq:Extended_Derivatives1} allows us to derive the following analytical expression for its gradient 
\begin{equation}
\label{eq:HamiltonianCorrespGrad}
\begin{aligned}
\nabla_{\Bnu} \hat{\H}(t,\Bnu^*(t),u^*(t))(x,z,r,q) & =  \begin{pmatrix}
 \D_{(x,z)} \hat{v} \big( t,\Bmu^*(t),u^*(t),x,z \big)^{\top} (r,q) \\
 + \INTDom{\D_{\Bmu} \hat{v} \big( t,\Bmu^*(t),u^*(t),x',z'\big)(x,z)^{\top} (r',q') \,}{\R^{2d+2}}{\Bnu^*(t)(x',z',r',q')} \\ \\
\hat{v} \big( t,\Bmu^*(t),u^*(t),x \big) 
\end{pmatrix} \\ 
& =  \begin{pmatrix}
\, \D_x v \big( t,\mu^*(t),u^*(t),x \big)^{\top} r + q  \nabla_{\mu} L(t,\mu^*(t),u^*(t))(x) \\ + \INTDom{\D_{\mu} v \big( t,\mu^*(t),u^*(t),y \big)(x)^{\top} p \,}{\R^{2d}}{\nu^*(t)(y,p)} \\ 0 \\ \\ v \big( t,\mu^*(t),u^*(t),x \big) \\ L(t,\mu^*(t),u^*(t))
\end{pmatrix}.
\end{aligned}
\end{equation}
It can now be verified straightforwardly as a consequence of \eqref{eq:Extended_Derivatives2} that
\begin{equation*}
\pi^{2d+2}_{\#} \Bnu^*(T) = \delta_{(-\lambda_0)},
\end{equation*}
and also that the right-hand side of the dynamics driving this extra covector variable is identically equal to $0$ by plugging \eqref{eq:HamiltonianCorrespGrad} into \eqref{eq:Extended_Dyn}. Hence up to a permutation of the coordinates, it is possible to express the extended state-costate curve $\Bnu^*(\cdot)$ for all times $t \in [0,T]$ as
\begin{equation}
\label{eq:Bnu_Decomposition}
\Bnu^*(t) = \nu^*(t) \times \delta_{(z^*(t),-\lambda_0)},
\end{equation}
where $\nu^*(\cdot) \in \AC([0,T],\Pcal_c(\R^{2d}))$. Plugging this last expression into \eqref{eq:HamiltonianCorresp} and \eqref{eq:HamiltonianCorrespGrad} then yields 
\begin{equation}
\label{eq:HamiltonianCorrespBis}
\hat{\H}(t,\Bnu^*(t),u^*(t)) = \H_{\lambda_0}(t,\nu^*(t),u^*(t)), 
\end{equation}
and by repeating the same coordinate permutation, we recover the following expression for the gradient
\begin{equation*}
\begin{aligned}
\nabla_{\Bnu} \hat{\H}(t,\Bnu^*(t),u^*(t))(x,z,r,q) & =  \begin{pmatrix}
\, \D_x v \big( t,\mu^*(t),u^*(t),x \big)^{\top} r  -\lambda_0  \nabla_{\mu} L(t,\mu^*(t),u^*(t))(x) \\ + \INTDom{\D_{\mu} v \big( t,\mu^*(t),u^*(t),y \big)(x)^{\top} p \,}{\R^{2d}}{\nu^*(t)(y,p)} \\ v \big( t,\mu^*(t),u^*(t),x \big) \\ \\ L(t,\mu^*(t),u^*(t)) \\ 0
\end{pmatrix},
\end{aligned}
\end{equation*}
for $\Lcal^1$-almost every $t \in [0,T]$ and any $(x,z,r,q) \in \supp(\Bnu^*(t))$. By applying again the differentiation results of Proposition \ref{prop:Example}, it can finally be checked that 
\begin{equation*}
\nabla_{\nu} \H_{\lambda_0}(t,\nu^*(t),u^*(t))(x,r) = \begin{pmatrix}
\, \D_x v \big( t,\mu^*(t),u^*(t),x \big)^{\top} r  -\lambda_0  \nabla_{\mu} L(t,\mu^*(t),u^*(t))(x) \\ + \INTDom{\D_{\mu} v \big( t,\mu^*(t),u^*(t),y \big)(x)^{\top} p \,}{\R^{2d}}{\nu^*(t)(y,p)} \\ v \big( t,\mu^*(t),u^*(t),x \big)
\end{pmatrix}, 
\end{equation*}
and the gradient of the augmented Hamiltonian can be rewritten as  
\begin{equation}
\label{eq:HamiltonianGradFinal}
\nabla_{\Bnu} \hat{\H}(t,\Bnu^*(t),u^*(t))(x,z,r,q) =  \begin{pmatrix}
\nabla_{\nu} \H_{\lambda_0}(t,\nu^*(t),u^*(t))(x,r) \\ L(t,\mu^*(t),u^*(t)) \\ 0
\end{pmatrix}.
\end{equation}
Thus because $\Bnu^*(\cdot)$ solves \eqref{eq:Extended_Dyn} and owing to the decomposition \eqref{eq:Bnu_Decomposition} combined with the identity \eqref{eq:HamiltonianGradFinal}, it follows that $\nu^*(\cdot)$ is a solution of \eqref{eq:Thm_HamiltonianSys} driven by gradient of the Hamiltonian defined in \eqref{eq:Thm_BolzaHamiltonian}. Moreover, the complementarity slackness and maximisation condition can be directly recovered by using the definition of $\hat{\Psi}_i(\Bmu^*(T))$ in \eqref{eq:Extended_Comp} for the former and plugging \eqref{eq:HamiltonianCorrespBis} into \eqref{eq:Extended_Max} for the latter.


\section{Examples of locally differentiable functionals}
\label{appendix:Examples}

In this auxiliary section, we provide two examples of extended real-valued functionals defined over $\Pcal_2(\R^d)$ which are locally differentiable in the sense of Definition \ref{def:LocalDiff}, and compute their gradients.

\begin{prop}[Locally differentiable integral functionals]
\label{prop:Example}
Let $V : \R^d \rightarrow \R$ be a continuously differentiable mapping. Then the functional
\begin{equation}
\label{eq:Vdef}
\V : \mu \in \Pcal_c(\R^d) \mapsto \INTDom{V(x)}{\R^d}{\mu(x)} \in \R,
\end{equation}
is locally differentiable in the sense of Definition \ref{def:LocalDiff}, and its Wasserstein gradient is given by
\begin{equation*}
\nabla \V (\mu)(x) = \nabla V(x),
\end{equation*}
for every $\mu \in \Pcal_c(\R^d)$ and any $x \in \supp(\mu)$.

Similarly, let $L : \R^d \times \Pcal_c(\R^d) \rightarrow \R$ be such that $x \in \R^d \rightarrow L(x,\mu)$ is continuously differentiable for every $\mu \in \Pcal_c(\R^d)$ and $\mu \in \Pcal_c(\R^d) \mapsto \nabla_x L(x,\mu)$ is continuous for every $x \in \R^d$. Moreover, suppose that $\mu \in \Pcal_c(\R^d) \rightarrow L(x,\mu)$ is locally differentiable for every $x \in \R^d$ and that $(x,y) \in \R^d \times \supp(\mu) \mapsto \nabla_{\mu} L(x,\mu)(y) \in \R^d$ is continuous for every $\mu \in \Pcal_c(\R^d)$. Then, the map
\begin{equation}
\label{eq:Ldef}
\Lpazo : \mu \in \Pcal_c(\R^d) \mapsto \INTDom{L(x,\mu)}{\R^d}{\mu(x)},
\end{equation}
is locally differentiable, and its Wasserstein gradient writes
\begin{equation*}
\nabla \Lpazo(\mu)(x) = \nabla_x L(x,\mu) + \INTDom{\nabla_{\mu} L(z,\mu)(x)}{\R^d}{\mu(z)} ,
\end{equation*}
for any $\mu \in \Pcal_c(\R^d)$ and $x \in \supp(\mu)$.
\end{prop}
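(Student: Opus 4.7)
For the first functional $\V$, my plan is to directly verify the strong-subdifferential chain rule along any plan $\bar{\mu} \in \Gamma(\mu,\nu)$ and then invoke the converse direction of Proposition \ref{prop:Chainrule}. Since $\mu$ is compactly supported, multiplying $V$ by a smooth cutoff equal to $1$ near $\supp(\mu)$ and mollifying produces $\zeta_n \in C^\infty_c(\R^d)$ with $\nabla \zeta_n \to \nabla V$ in $L^2(\mu)$, which shows $\nabla V \in \Tan_\mu \Pcal_2(\R^d)$. Then for $R > 0$, $\nu \in \Pcal(B_\mu(R))$, and $\bar{\mu} \in \Gamma(\mu,\nu)$, writing $\V(\nu) - \V(\mu) = \int [V(y)-V(x)] \textnormal{d}\bar{\mu}$ and using the integral mean-value theorem together with uniform continuity of $\nabla V$ on the compact set $\supp(\mu) \cup B_\mu(R)$ (with modulus $\omega_R$) yields the pointwise estimate $|V(y) - V(x) - \langle \nabla V(x), y-x\rangle| \leq \omega_R(|y-x|)|y-x|$. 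Splitting the integral over $\{|y-x|<\delta\}$ and $\{|y-x|\geq \delta\}$, using Cauchy--Schwarz on the first piece and Chebyshev on the second with $\delta = \sqrt{W_{2,\bar{\mu}}(\mu,\nu)}$, delivers an $o_R(W_{2,\bar{\mu}}(\mu,\nu))$ remainder, which gives the chain-rule expansion \eqref{eq:Chainrule} with $\nabla \V(\mu) = \nabla V$.

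For the second functional $\Lpazo$, the plan is to decompose, along any $\bar{\mu} \in \Gamma(\mu,\nu)$,
\begin{equation*}
\Lpazo(\nu) - \Lpazo(\mu) = \INTDom{[L(y,\mu) - L(x,\mu)]}{\R^{2d}}{\bar{\mu}(x,y)} + \INTDom{[L(y,\nu) - L(y,\mu)]}{\R^d}{\nu(y)}.
\end{equation*}
Applying the argument above to $V(\cdot) := L(\cdot,\mu)$ handles the first summand, producing the gradient contribution $\nabla_x L(x,\mu)$. For each fixed $y$, pointwise local differentiability of $\mu' \mapsto L(y,\mu')$ combined with Proposition \ref{prop:Chainrule} expands $L(y,\nu)-L(y,\mu)$ along $\bar{\mu}$ with leading term $g(y) := \int \langle \nabla_\mu L(y,\mu)(x'), y'-x'\rangle \textnormal{d}\bar{\mu}(x',y')$ and a pointwise remainder $r_y(\bar{\mu},\nu)$. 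I would then rewrite $\int g\, \textnormal{d}\nu = \int g\, \textnormal{d}\mu + \int [g(y)-g(x)]\textnormal{d}\bar{\mu}$; Fubini transforms the first piece into $\int \langle \int \nabla_\mu L(z,\mu)(x)\textnormal{d}\mu(z), y-x\rangle \textnormal{d}\bar{\mu}(x,y)$, precisely the non-local contribution of the target gradient, while the correction $\int [g(y)-g(x)] \textnormal{d}\bar{\mu}$ is controlled via the joint continuity of $(x,y) \mapsto \nabla_\mu L(x,\mu)(y)$ on $(\supp(\mu) \cup B_\mu(R))^2$ and the same splitting trick as above, yielding an $o_R(W_{2,\bar{\mu}})$ bound.

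The main obstacle is the residual $\int r_y(\bar{\mu},\nu) \textnormal{d}\nu(y)$: pointwise local differentiability only ensures that $r_y/W_{2,\bar{\mu}} \to 0$ for each fixed $y$, whereas integrating against $\nu$ requires uniformity in $y$ on the compact set $\supp(\nu) \subset B_\mu(R)$. My plan here is to exploit the joint continuity of $\nabla_\mu L(\cdot,\mu)(\cdot)$ and an equicontinuity/Ascoli-type argument in $L^2(\mu)$, so that the small-o in the pointwise local differentiability can be chosen uniformly in $y$; concretely, one revisits the two-term splitting of the $\V$ case applied to the $y$-parametrised family of first-order remainders and uses uniform continuity of the gradient on compact sets. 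Once such uniformity is established, combining all pieces yields the chain-rule expansion along any $\bar{\mu}$, and in particular along $\bar{\mu} \in \Gamma_o(\mu,\nu)$. To conclude via the converse direction of Proposition \ref{prop:Chainrule}, there remains to check that $\nabla \Lpazo(\mu) \in \Tan_\mu \Pcal_2(\R^d)$: the summand $\nabla_x L(\cdot,\mu)$ is continuous on $\supp(\mu)$ and is handled by the cutoff-and-mollify approximation above, while $x \mapsto \int \nabla_\mu L(z,\mu)(x) \textnormal{d}\mu(z)$ is a Bochner integral in $L^2(\mu)$ of elements of the closed subspace $\Tan_\mu \Pcal_2(\R^d)$, hence belongs to it.
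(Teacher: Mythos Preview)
Your argument for $\V$ is essentially the paper's: both use the integral mean-value identity, split the remainder over $\{|x-y|\leq\eta\}$ and its complement, apply uniform continuity of $\nabla V$ on the first piece and Chebyshev on the second, and conclude via the converse of Proposition~\ref{prop:Chainrule}. You work along arbitrary plans while the paper restricts to $\gamma\in\Gamma_o(\mu,\nu)$; this is harmless since the converse of Proposition~\ref{prop:Chainrule} only requires optimal plans.

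For $\Lpazo$ the strategies diverge slightly in the choice of decomposition. You split $L(y,\nu)-L(x,\mu)=[L(y,\mu)-L(x,\mu)]+[L(y,\nu)-L(y,\mu)]$, which forces you to expand the measure-derivative at the \emph{target} point $y\in\supp(\nu)$ and then transfer the resulting linear functional $g$ from $\nu$ back to $\mu$ via $\int g\,\textnormal{d}\nu=\int g\,\textnormal{d}\mu+\int[g(y)-g(x)]\,\textnormal{d}\bar\mu$; this generates a further correction that you must estimate. The paper instead splits $L(y,\nu)-L(x,\mu)=[L(y,\nu)-L(x,\nu)]+[L(x,\nu)-L(x,\mu)]$, so the measure-derivative is evaluated at the \emph{source} point $x\in\supp(\mu)$ and, after one application of Fubini, lands directly on the desired non-local gradient term without any transfer step. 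The paper's route is shorter, but yours is equally valid and arrives at the same formula. Both approaches implicitly require that the pointwise remainder $o_R^x(W_2(\mu,\nu))$ in the expansion of $\mu'\mapsto L(x,\mu')$ be uniform in the base point over a compact set; you flag this explicitly and propose to handle it via the joint continuity hypothesis on $\nabla_\mu L$, while the paper simply writes the expansion \eqref{eq:Appendix2} and integrates it. Your treatment of the $\Tan_\mu\Pcal_2(\R^d)$-membership of the two gradient summands (cutoff--mollify for $\nabla_x L(\cdot,\mu)$, closedness under Bochner integrals for the non-local piece) matches the paper's orthogonality argument.
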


\begin{proof}
We first prove that the functional $\V(\cdot)$ defined in \eqref{eq:Vdef} is locally differentiable. Let $\mu \in \Pcal_c(\R^d)$, $R > 0$ and $\nu \in \Pcal(B_{\mu}(R))$. For every $\gamma \in \Gamma_o(\mu,\nu)$, it holds
\begin{equation}
\label{eq:Appendix_TaylorV}
\begin{aligned}
\V(\nu) - \V(\mu) & = \INTDom{ \Big( V(y) - V(x) \Big)}{\R^{2d}}{\gamma(x,y)} \\
& = \INTDom{\INTSeg{\langle \nabla V \big( x + s(y-x) \big) , y-x \rangle}{s}{0}{1}}{\R^{2d}}{\gamma(x,y)}, \\
& = \INTDom{\langle \nabla V(x) , y-x \rangle}{\R^{2d}}{\gamma(x,y)} + \INTSeg{\INTDom{\langle \nabla V \big( x + s(y-x) \big) - \nabla V(x) , y-x \rangle}{\R^{2d}}{\gamma(x,y)}}{s}{0}{1}, \\
& = \INTDom{\langle \nabla V(x) , y-x \rangle}{\R^{2d}}{\gamma(x,y)} + I_R,
\end{aligned}
\end{equation}
where we used Fubini's theorem. Our goal is now to prove that $I_R = o_R(W_2(\mu,\nu))$. Since we assumed that $\nabla V(\cdot)$ is continuous, we can find for every $\epsilon > 0$ another real $\eta > 0$ such that $|\nabla V(x+z) - \nabla V(x)| \leq \tfrac{\epsilon}{2}$ for any $z \in B(0,\eta)$. Whence, we can split $I$ into two terms and estimate them as
\begin{equation*}
\begin{aligned}
|I_R| & \leq \INTSeg{\INTDom{|\langle \nabla V \big( x + s(y-x) \big) - \nabla V(x) , y-x \rangle| \,}{|x-y| \leq \eta}{\gamma(x,y)}}{s}{0}{1}  \\
& \hspace{0.4cm} + \INTSeg{\INTDom{|\langle \nabla V \big( x + s(y-x) \big) - \nabla V(x) , y-x \rangle| \,}{|x-y| >\eta}{\gamma(x,y)}}{s}{0}{1} \\
& \leq \tfrac{\epsilon}{2} \hspace{-0.1cm} \INTDom{|x-y|}{\R^{2d}}{\gamma(x,y)} + 2 \hspace{-0.2cm} \sup_{z \in \co B_{\mu}(R)} |\nabla V(z)| \sup_{x,y \in B_{\mu}(R)}|x-y| \gamma \Big( \Big\{ (x,y) \in \R^{2d} ~\text{s.t.}~ |x-y| > \eta \Big\} \Big) \\
& \leq \tfrac{\epsilon}{2} \hspace{-0.1cm} \INTDom{|x-y|}{\R^{2d}}{\gamma(x,y)} + \tfrac{C_R}{\eta^2} \hspace{-0.1cm} \INTDom{|x-y|^2}{\R^{2d}}{\gamma(x,y)} \\
& \leq \tfrac{\epsilon}{2} W_2(\mu,\nu) + \tfrac{C_R}{\eta^2} W_2(\mu,\nu)^2,
\end{aligned}
\end{equation*}
for some constant $C_R > 0$ depending only on $\supp(\mu)$, $R >0$ and $V(\cdot)$, and where we used H\"older's and Chebyshev's inequalities together with the fact that $\gamma \in \Gamma_o(\mu,\nu)$. Thus whenever $W_2(\mu,\nu) \leq \eta' := \tfrac{\eta^2 \epsilon}{2 C_R}$, it holds that $|I_R| \leq \epsilon W_2(\mu,\nu)$ which precisely amounts to saying that $I_R = o_R(W_2(\mu,\nu))$. Since $\nabla V(\cdot) \in \Tan_{\mu} \Pcal_2(\R^d)$,  \eqref{eq:Appendix_TaylorV} implies by Proposition \ref{prop:Chainrule} that $\V(\cdot)$ is locally differentiable at $\mu$.

Let us now consider the functional $\Lpazo(\cdot)$ defined in \eqref{eq:Ldef}. By adapting the same arguments as above, one has for every $\gamma \in \Gamma_o(\mu,\nu)$ 
\begin{equation}
\label{eq:Appendix1}
\begin{aligned}
\Lpazo(\nu) - \Lpazo(\mu) & = \INTDom{\Big( L(y,\nu)-L(x,\mu) \Big)}{\R^{2d}}{\gamma(x,y)} \\
& = \INTDom{\Big( \langle \nabla_x L(x,\mu),y-x \rangle + \big( L(x,\nu) - L(x,\mu) \big) \Big)}{\R^{2d}}{\gamma(x,y)} + o_R(W_2(\mu,\nu)),
\end{aligned}
\end{equation}
where we used the fact that $\langle \nabla_x L(x,\nu),x-y \rangle = \langle \nabla_x L(x,\mu),x-y \rangle + o(|x-y|)$ as $W_2(\mu,\nu) \rightarrow 0$. Since $\mu \in \Pcal_c(\R^d) \mapsto L(x,\mu)$ is locally differentiable for $\mu$-almost every $x \in \R^d$, it further holds
\begin{equation}
\label{eq:Appendix2}
L(x,\nu) - L(x,\mu) = \INTDom{\langle \nabla_{\mu} L(x,\mu)(z) , w-z \rangle}{\R^{2d}}{\gamma(z,w)} + o_R(W_2(\mu,\nu)),
\end{equation}
so that by plugging \eqref{eq:Appendix2} into \eqref{eq:Appendix1} and applying Fubini's Theorem, we can derive
\begin{equation}
\label{eq:AppendixTalorL}
\Lpazo(\nu) - \Lpazo(\mu) = \INTDom{\Big\langle \nabla_x L(x,\mu) + \INTDom{\nabla_{\mu} L(z,\mu)(x)}{\R^d}{\mu(z)} , y-x \Big\rangle}{\R^{2d}}{\gamma(x,y)} + o_R(W_2(\mu,\nu)).
\end{equation}
Finally observe that $\nabla_x L(\cdot,\mu) \in \Tan_{\mu} \Pcal_2(\R^d)$ by definition, and also
\begin{equation*}
\INTDom{\nabla_{\mu} L(z,\mu)(\cdot)}{\R^d}{\mu(z)} \in \Tan_{\mu} \Pcal_2(\R^d),
\end{equation*}
because $\nabla_{\mu} L(z,\mu)(\cdot) \in \Tan_{\mu} \Pcal_2(\R^d)$ for any $z \in \supp(\mu)$ and
\begin{equation*}
\INTDom{\Big\langle \INTDom{\nabla_{\mu} L(z,\mu)(x)}{\R^d}{\mu(z)} , \xi(x) \Big\rangle}{\R^d}{\mu(x)} = \INTDom{\INTDom{\langle \nabla_{\mu} L(z,\mu)(x),\xi(x)\rangle}{\R^d}{\mu(x)}}{\R^d}{\mu(z)} = 0,
\end{equation*}
whenever $\xi \in \Tan_{\mu} \Pcal_2(\R^d)^{\perp}$, by linearity of the integral and Fubini's Theorem. Therefore, the identity \eqref{eq:AppendixTalorL} implies together with Proposition \ref{prop:Chainrule} that $\Lpazo(\cdot)$ is locally differentiable at $\mu$.
\end{proof}

\begin{rmk}[Global definition of Wasserstein gradient]
In general, the gradient $\nabla \phi(\mu)$ of a functional $\phi : \Pcal_c(\R^d) \rightarrow \R$ at $\mu \in \Pcal_c(\R^d)$ is an element of $L^2(\R^d,\R^d;\mu)$, and is thus well-defined on $\supp(\mu)$. In the particular case where $\nabla \phi(\mu)$ is a continuous map -- as in Proposition \ref{prop:Example} above -- , it is possible to extend it as a continuous function defined over the whole of $\R^d$ and not only $\supp(\mu)$. 
\end{rmk}

\smallskip

\begin{flushleft}
{\small{\bf  Acknowledgement.}  This material is based upon work supported by the Air Force Office of Scientific Research under award number FA9550-18-1-0254.}
\end{flushleft}


\bibliographystyle{plain}
{\footnotesize
\bibliography{../ControlWassersteinBib}
}

\end{document}